\documentclass[psamsfonts]{amsart}
\usepackage{cite,mathrsfs,amsmath,amssymb,amsthm,bbm}
\usepackage[english]{babel}
\usepackage[T1]{fontenc}
\usepackage[utf8]{inputenc}
\usepackage{tikz}
\usepackage{todonotes}
\usetikzlibrary{decorations.markings}
\usepackage{verbatim}

\usepackage[in,headings]{fullpage}
\usepackage[osf,theoremfont,largesc]{newtxtext}
\usepackage{microtype}

\usepackage[hang,flushmargin,bottom]{footmisc}

\usepackage[colorlinks=true,urlcolor=magenta,citecolor=magenta,linkcolor=magenta,breaklinks=true]{hyperref}

\newtheorem{lem}{Lemma}[section]
\newtheorem{thrm}[lem]{Theorem}
\newtheorem{prop}[lem]{Proposition}
\newtheorem{cor}[lem]{Corollary}
\theoremstyle{definition}
\newtheorem{defn}[lem]{Definition}

\theoremstyle{remark}
\newtheorem{rem}[lem]{Remark}

\newcommand{\eq}[2]{\begin{equation}\label{#1}#2\end{equation}}

\newcommand{\R}{\mathbb{R}}
\newcommand{\C}{\mathbb{C}}
\newcommand{\Z}{\mathbb{Z}}
\newcommand{\N}{\mathbb{N}}
\newcommand{\T}{\mathbb{T}}

\newcommand{\bbS}{\mathbb{S}}

\newcommand{\M}{\mathcal{M}}
\newcommand{\Schwartz}{\mathcal{S}}

\renewcommand{\epsilon}{\varepsilon}

\newcommand{\mr}{\mathrm}

\newcommand{\<}{\langle}
\renewcommand{\>}{\rangle}
\newcommand{\p}{\partial}

\newcommand{\bbo}{\mathbbm 1}

\newcommand{\bpf}{\begin{proof}}
\newcommand{\epf}{\end{proof}}
\newcommand{\qtq}[1]{\quad\text{#1}\quad}
\newcommand{\qt}[1]{\quad\text{#1}}

\usepackage{enumerate}

\newcommand{\nonlineary}{\delta_{\{y=0\}}}

\numberwithin{equation}{section}

\title[WELL-POSEDNESS for Cubic NLS with Nonlinearity Supported on a Hypersurface]{WELL-POSEDNESS for the Cubic NONLINEAR SCHR\"ODINGER EQUATION With Nonlinearity Supported on a Hypersurface}
\date{}
\author{Xilin Chen}
\address{Department of Mathematics \& Statistics, Georgetown University\\Washington, DC 20057, USA}
\email{xc243@georgetown.edu}
\thanks{}

\author{Jianlei Huang}
\address{Department of Mathematics \& Statistics, Georgetown University\\Washington, DC 20057, USA}
\email{jh2524@georgetown.edu}
\thanks{}

\begin{document}

\begin{abstract}
We prove that the 2-d cubic nonlinear Schr\"odinger equation with nonlinearity supported on a line or a sphere is globally well-posed in \(H^1(\R^2)\). For higher dimensions, we prove that the equation with hyperplane-supported nonlinearity is locally well-posed in suitable anisotropic Sobolev spaces. We also prove that the equation with sphere-supported nonlinearity is globally well-posed in \(H^1(\R^n)\) for radial data.
\end{abstract}

\maketitle

\section{Introduction}

We consider the dynamics of a field \(\psi:\R^n\to\C\) under the Hamiltonian
\begin{equation}\label{E}
    E[\psi]:=\frac 1 2\int_{\R^n}|\nabla\psi(x)|^2\,dx+\frac \mu 2\int_\Sigma |\psi(x)|^4 \,dS(x),
\end{equation}
where \(\mu=\pm1\) and \(\Sigma\subseteq \R^n\) is a smooth hypersurface with surface measure \(S\). This leads to the cubic nonlinear Schr\"odinger equation with nonlinearity supported on \(\Sigma\),
\begin{equation}\label{nls-sigma}\tag{{NLS}$_\Sigma$}{
\begin{aligned}
 i\partial_t \psi + \Delta \psi &= 2\mu\delta_\Sigma\,|\psi|^2\psi.
\end{aligned}}
\end{equation}
Here, \(\delta_\Sigma\) is understood in the sense of distributions, i.e.,
\[
\langle \delta_{\Sigma},f\rangle=\int_{\Sigma}f\ dS
\]
 for all \(f\in C_c^\infty(\R^n)\). We say \eqref{nls-sigma} is "defocusing" if \(\mu=1\), and "focusing" if \(\mu=-1\).

From a physical perspective, this model describes a medium in which the nonlinear response is negligible in the bulk but becomes significant along a thin interface or defect. Studying such an idealized system helps isolate how a spatially localized nonlinear interaction modifies dispersive propagation. 
From a modeling perspective, \eqref{nls-sigma} may be viewed as an idealized dispersive system whose cubic response is localized on a codimension-one defect. Away from the interface, the field undergoes free Schrödinger evolution; formally, the effect of the defect can instead be encoded through continuity of the field across \(\Sigma\) together with a nonlinear jump condition for its normal derivative \cite{Finco_2023}.
This localization leads to an analytical structure substantially different from that of the standard nonlinear Schrödinger equation. The nonlinearity acts through the trace of the solution on \(\Sigma\), rather than through its values in the ambient space. The central difficulty is therefore to control the interaction between Schrödinger dispersion, trace regularity, and the resulting measure-valued forcing term.

Nonlinear Schr\"odinger equations with concentrated nonlinearities have been studied in several settings. For a recent survey of results on the point-concentrated nonlinearity, see the review of Tentarelli \cite{Tentarelli_review}. In one dimension, where $\Sigma = \{x=0\} $, global well-posedness in the defocusing case for initial data in \(H^1(\R)\) was proved in \cite{Adami2001,Komech2007}. A threshold theorem was proved in the focusing case by \cite{Holmer2020}. The equation \eqref{nls-sigma} is mass-critical in one dimension. Local well-posedness in $L^2(\R)$ was proved in \cite{Adami2021}, with global well-posedness and scattering for small data obtained in \cite{Adami2021, Hayashi2023}. Large data scattering in $L^2(\R)$ was recently proved by Harrop-Griffiths, Killip, and Vi\c{s}an \cite{harropgriffiths2025scatteringnonlinearschrodingerequation}. Rather less is known in higher dimensions. Well-posedness for point-concentrated nonlinearities has been investigated in two dimensions by Carlone, Correggi, and Tentarelli \cite{Carlone_2019}, and in three dimensions by Adami, Dell'Antonio, Figari, and Teta \cite{Adami2003}. In dimensions greater than or equal to two, the point-concentrated problem differs significantly from the hypersurface-concentrated problem considered here. Crucially, point evaluation is not controlled by the $H^1(\mathbb R^2)$ norm, whereas restriction to a smooth curve can be treated using trace estimates. 

To the best of the authors' knowledge, the only result that directly addresses the model \eqref{nls-sigma} when $n \geq 2$ is by Finco, Tentarelli, and Teta \cite{Finco_2023}, who prove global well-posedness with nonlinearities concentrated on $\Sigma = \bbS^2\subset\mathbb R^3$ for suitable initial data.  Our result, Theorem \ref{thrm:radial}, complements this work by proving global well-posedness in the energy space for radial initial data. A related problem when $\Sigma $ is a hyperplane is the nonlinear Sch\"odinger equation on half-spaces with nonlinear boundary conditions. Well-posedness in the mass-critical and mass-subcritical settings as well as Strichartz estimates have been recently studied in \cite{garofalo2026strichartzestimatesschrodingerequations,Hayashi2021}.

In this paper, we focus on two canonical hypersurfaces in $\mathbb{R}^n$: the hyperplane and the unit sphere. In the hyperplane case, let \(x=(x',y)\in\R^{n-1}\times\R=\R^n\), and let \(\Sigma=\{y=0\}\) so that \eqref{nls-sigma} becomes
\begin{equation}\label{nls-y}\tag{{NLS}$_y$}{
\begin{aligned}
 i\partial_t \psi + \Delta \psi &= 2\mu\delta_{\{y=0\}}\,|\psi|^2\psi.
\end{aligned}}
\end{equation}In the unit sphere case, let \(\Sigma=\mathbb S^{n-1}=\{|x|=1:x\in \R^n\}\), which is the \((n-1)-\)dimensional sphere in \(\R^n\), so that \eqref{nls-sigma} becomes
\begin{equation}\label{nls-S}\tag{{NLS}$_{\mathbb S}$}{
\begin{aligned}
 i\partial_t \psi + \Delta \psi &= 2\mu\delta_{\mathbb S^{n-1}}\,|\psi|^2\psi.
\end{aligned}}
\end{equation}

Scalling symmetry frequently indicates in which space we might hope to have well-posedness. While the general model \eqref{nls-sigma} does not possess a scaling symmetry, the hyperplane case \eqref{nls-y} does enjoy this property: If \(\psi(t,x)\) is a solution to \eqref{nls-y}, then
\[
\psi_\lambda:=\sqrt{\lambda}\psi(\lambda^2t,\lambda x)
\]
is a  solution of \eqref{nls-y} with initial data \(\psi_{0,\lambda}=\sqrt{\lambda}\psi_0(\lambda x)\). The corresponding scalling critical sobolev space is \(\dot{H}^{\frac{n+1}{2}}(\R^n)\). Crucially, \eqref{nls-y} is energy sub-critical when \(n=2\), critical when $n = 3$, and super-critical when \(n\geq 4\).

Our first result, Theorem \ref{thrm:GWPy}, proves $H^1$ well-posedness of \eqref{nls-y} and \eqref{nls-S} in the energy-subcritical case \(n=2\). Our second result, Theorem \ref{thrm:LWP}, considers \eqref{nls-y} in the energy-critical and energy-supercritical cases by working  with solutions in anisotropic Sobolev spaces with sufficient smoothness. As \eqref{nls-S} does not enjoy scaling symmetry, our final and possibly most surprising result is that $H^1$ well-posedness holds for \eqref{nls-S} for all $n\geq 3$ with radial data.

Before stating our main results, let us first make the following definition:

\begin{defn}\label{def:duhm}
Let \(I\subseteq\R\) be a (time) interval containing \(0\), and \(X\) be a subspace of \(L^2(\R^n)\). We say \(\psi\in C(I;X)\) is a strong solution to \eqref{nls-sigma} with initial data \(\psi_0\) if \(\psi\in L^4_{\mr{loc}}(I;L^4(\Sigma,dS))\) and for all \(t\in I\) we have
    \begin{equation}\label{eq:Dulm}
    \psi(t)=e^{it\Delta}\psi_0-i2\mu\int_{0}^te^{i(t-s)\Delta}[\delta_{\Sigma}|\psi(s)|^2\psi(s)]ds
\end{equation}
\end{defn}

Our main results comprise the following three theorems:
\begin{thrm}\label{thrm:GWPy}
Let  \(n=2\), there exists a unique strong global solution \(\psi\in C(\R;\,H^1(\R^2))\) to both \eqref{nls-y} and \eqref{nls-S} with initial data $\psi_0\in H^1(\R^2)$ if either of the following two conditions holds:

(i)  \eqref{nls-sigma} is defocusing;

(ii) \(\|\psi_0\|_{H^1(\R^2)}\) is sufficiently small.

This solution conserves both the mass and the energy. Moreover, for any \(T>0\), the solution map \(H^1(\R^2)\ni \psi_0\mapsto \psi\in C([-T,T];H^1(\R^2))\) is continuous. 
\end{thrm}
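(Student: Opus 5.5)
The plan is a contraction-mapping argument for the Duhamel formulation \eqref{eq:Dulm} in a space built from $C_tH^1$ and a trace-Strichartz norm on $\Sigma$. This gives local well-posedness with existence time depending only on $\|\psi_0\|_{H^1}$. Conservation of mass and energy, together with a Gagliardo--Nirenberg-type trace inequality, then upgrades it to global existence.

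\textbf{Step 1: linear estimates.} For $\Sigma=\{y=0\}\subset\R^2$, take the partial Fourier transform in $x'$. The restriction $e^{it\Delta}\psi_0|_{y=0}$ then becomes a one-dimensional Kato-smoothing/trace object, and I aim for the homogeneous estimate
\[
\|\langle\partial_{x'}\rangle^{s} e^{it\Delta}\psi_0\|_{L^\infty_{x'}L^2_t}\lesssim\|\psi_0\|_{H^1}, \qquad s\le \tfrac34 .
\]
More usefully, I want a Strichartz-type bound $\|e^{it\Delta}\psi_0|_\Sigma\|_{L^4_tL^4(\Sigma)}\lesssim\|\psi_0\|_{H^{s}}$ for some $s<1$. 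Heuristically, the trace costs half a derivative, and one-dimensional $L^4_tL^4$ Strichartz costs $1/4$ derivative off scaling, so $s=3/4$. The dual (inhomogeneous) estimates handle the Duhamel term
\[
\int_0^t e^{i(t-s)\Delta}[\delta_\Sigma F]\,ds .
\]
I want its $C_tH^1$ norm bounded by $F$ in a space such as $L^{4/3}_tW^{1/2,4/3}(\Sigma)$ or $H^{1/4}_tL^2_{x'}$. This bound is obtained by $TT^*$ and a Christ--Kiselev argument, or by directly computing the resolvent kernel of the boundary forcing. For the circle, the same estimates hold locally in time: flatten $\Sigma$ locally using a partition of unity, or use the explicit spherical-harmonic expansion. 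Only local-in-time versions are needed, since the time interval is short.

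\textbf{Step 2: nonlinear estimate and contraction.} By the fractional Leibniz rule on the one-dimensional $\Sigma$ and the Sobolev embedding $H^{1/2}(\Sigma)\hookrightarrow L^p(\Sigma)$ for all $p<\infty$, the traces $\psi|_\Sigma\in H^{1/2}(\Sigma)$ lie in every $L^p$. Hence $|\psi|^2\psi|_\Sigma$ is controlled in the required dual norm. Hölder in time then gives a factor $T^\theta$ for some $\theta>0$, which is where energy-subcriticality enters. This yields a contraction on a ball in
\[
X_T=C([-T,T];H^1)\cap\{\psi|_\Sigma\in L^4_tL^4(\Sigma)\}
\]
with $T=T(\|\psi_0\|_{H^1})$. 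The same argument gives uniqueness and Lipschitz continuity of the data-to-solution map on $[-T,T]$.

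\textbf{Step 3: conservation laws and globalization.} I justify mass and energy conservation by regularizing. One option is to mollify the data and $\delta_\Sigma$, replacing it by $\delta_\Sigma^\epsilon\in L^\infty$; for that smooth nonlinearity the conservation laws are classical. Then I pass to the limit using the stability estimates from Step 2. Globalization follows from the trace Gagliardo--Nirenberg inequality
\[
\int_\Sigma|\psi|^4\,dS\lesssim \|\psi\|_{L^2}^2\|\psi\|_{H^1}^2 ,
\]
which holds via $\|\psi\|_{L^4(\Sigma)}\lesssim\|\psi\|_{H^{1/2}(\Sigma)}^{}$ together with interpolation. In the defocusing case the energy directly controls $\|\nabla\psi\|_{L^2}^2$. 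In the focusing case, with small $\|\psi_0\|_{H^1}$, conservation gives
\[
\|\nabla\psi\|^2\le 2E+C M\|\psi\|_{H^1}^2 ,
\]
and a continuity argument keeps $\|\psi(t)\|_{H^1}$ uniformly small. Iterating the local theory with a fixed time step then gives a global solution, and continuity of the solution map on $[-T,T]$ follows by concatenation.

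The main obstacle is Step 1. I must identify function spaces for the boundary forcing $\delta_\Sigma F$ in which the inhomogeneous estimate lands exactly in $H^1(\R^2)$, which requires a careful choice of the time-regularity index for $F$, and in which the cubic nonlinearity closes. I would first try the explicit one-dimensional computation via the Fourier transform in $(t,x')$, where the Duhamel operator for $\delta_{y=0}F$ has the symbol $(\tau+|\xi'|^2)^{-1/2}$, and then transfer it to the circle.
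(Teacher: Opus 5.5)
\textbf{Main gap: the contraction does not close.} Your fixed-point strategy in $C_tH^1$ is a different route from the paper's. It breaks down exactly at Step 1, which you flag yourself, and neither candidate forcing space you name works.

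\emph{Scaling rules out $L^{4/3}_tW^{1/2,4/3}(\Sigma)$.} Under $\psi\mapsto\psi(\lambda^2t,\lambda x)$ the boundary forcing becomes $F\mapsto\lambda F(\lambda^2t,\lambda x')$. The $\dot H^1(\R^2)$ norm is invariant, while $\|F\|_{L^{4/3}_t\dot W^{1/2,4/3}_{x'}}$ picks up $\lambda^{-3/4}$. So an $H^1$ bound in terms of $L^{4/3}_tW^{1/2,4/3}(\Sigma)$ fails at frequency $N$ by a factor $N^{3/4}$.

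\emph{Plancherel shows what is actually needed.} For $G=\int_\R e^{-is\Delta}[\delta_{\{y=0\}}F(s)]\,ds$, with $\widetilde F$ the space-time Fourier transform in $(t,x')$,
\[
\|G\|_{\dot H^1(\R^2)}^2=\iint_{\tau<-\xi^2}\frac{|\tau|}{\sqrt{|\tau+\xi^2|}}\,|\widetilde F(\tau,\xi)|^2\,d\tau\,d\xi,
\]
and $|\tau|=\xi^2+|\tau+\xi^2|$ on the region of integration, so the weight splits into two parts.
\begin{itemize}
\item The tangential part $\xi^2|\tau+\xi^2|^{-1/2}$ is $\geq\xi^2$ on the glancing set $|\tau+\xi^2|\leq 1$, i.e.\ waves travelling along $\Sigma$. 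There the $H^{1/4}_tL^2_{x'}$ weight is only $\langle\tau\rangle^{1/2}\approx|\xi|$, so that space cannot work. This part is controlled instead by $\|\partial_{x'}F\|_{L^{4/3}_tL^2_{x'}}$ via dual one-dimensional Strichartz, which is the mechanism of Lemma~\ref{thrm:stri}.
\item The normal part $|\tau+\xi^2|^{1/2}$ is a quarter time derivative of $F$ measured relative to $e^{it\partial_{x'}^2}$. It cannot be avoided, reflecting that $\partial_y\psi$ jumps by $2\mu|\psi|^2\psi$ across $\Sigma$.
\end{itemize}

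\emph{Consequence for Step 2.} You would have to place $|\psi|^2\psi|_\Sigma$ in a space with fractional time regularity. That forces your iteration space to carry time regularity of the trace, and requires a trilinear estimate in Bourgain- or time-Sobolev-type spaces. Your $X_T$ has no time regularity at all, and fractional Leibniz in $x'$ plus H\"older in $t$ produces none. So the contraction does not close as proposed. This is boundary-value-problem machinery that might be made to work on the line with real effort, but none of it is in the proposal.

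\emph{The circle.} A spatial partition of unity does not commute with $e^{it\Delta}$: the commutator costs a derivative, and curvature changes the glancing geometry. So ``flattening'' is not a reduction.

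\textbf{Secondary errors, and what the paper does instead.} Your trace Gagliardo--Nirenberg inequality is false. For $\psi(\lambda x)$ with $\lambda\to\infty$ the left side is $\sim\lambda^{-1}$ while the right side is $\sim\lambda^{-2}$. Interpolation actually gives $\int_\Sigma|\psi|^4\lesssim\|\psi\|_{L^2}\|\psi\|_{H^1}^3$. Hence your focusing bound $\|\nabla\psi\|^2\leq 2E+CM\|\psi\|_{H^1}^2$ is unjustified. The small-data case should instead use the bootstrap $\|\psi(t)\|_{H^1}^2\leq\|\psi_0\|_{H^1}^2+C\|\psi(t)\|_{H^1}^4$, as in Lemma~\ref{lem:uniform-regularized-control}.

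Obtaining conservation by letting $\rho^\epsilon\to\delta_\Sigma$ is also not a routine limit. It needs your estimates uniformly in $\epsilon$, plus enough convergence of $\psi^\epsilon$ to pass the energy to the limit, and that is the heart of the problem.

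The paper never estimates the Duhamel term in $H^1$ at all:
\begin{itemize}
\item The $H^1$ bound comes for free from conservation for the $\rho^\epsilon$-equations.
\item Only low-regularity convergence is proved. On the line this is in $L^\infty_yL^4_x$, via the anisotropic dispersive bound with integrable decay $|t|^{-3/4}$, so Gronwall closes. On the circle it uses the weighted $|t|^{-5/6}$ bound on an annulus, derived from Landau's estimate $|J_n(x)|\lesssim x^{-1/3}$.
\item Conservation for the limit comes from the time-reversal/uniqueness argument of Lemma~\ref{Conservation-limiting}.
\item Radon--Riesz then upgrades the convergence to $C_tH^1$.
\end{itemize}
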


\begin{thrm}\label{thrm:LWP}
    Let \(n\geq 2\) and \(s>\frac {n-1} 2\), then given any initial data \(\psi_0\in H_{x'}^sL_{y}^2\), there exists a time \(T=T(\|\psi_0\|_{H_{x'}^sL_{y}^2})>0\) and a unique strong solution \(\psi\in C([-T,T];H_{x'}^sL_{y}^2)\cap L^4([-T,T];H_{x'}^sC_{0,y})\) of \eqref{nls-y} satisfying \(\psi(0)=\psi_0\). This solution conserves the mass. Moreover, the data-to-solution map \(H_{x'}^sL_{y}^2\ni \psi_0\mapsto \psi\in C([-T,T];H_{x'}^sL_{y}^2)\) is Lipchitz continuous.
\end{thrm}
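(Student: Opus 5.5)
We argue by a contraction mapping on the Duhamel formulation \eqref{eq:Dulm}, after a partial Fourier transform in \(x'\). The key point is that \(e^{it\Delta}=e^{it\Delta_{x'}}e^{it\partial_y^2}\), and \(e^{it\Delta_{x'}}\) is unitary on \(H^s_{x'}\) and commutes with everything in \(y\). Hence all \(y\)-estimates can be carried out for the one-dimensional Schr\"odinger group acting on \(H^s_{x'}\)-valued functions. Since \(s>\frac{n-1}{2}\), the space \(H^s(\R^{n-1})\) is a Banach algebra, and \(\|fg\bar h\|_{H^s_{x'}}\lesssim\|f\|_{H^s_{x'}}\|g\|_{H^s_{x'}}\|h\|_{H^s_{x'}}\). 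This is what makes the trace nonlinearity \(|\psi(\cdot,0)|^2\psi(\cdot,0)\) tractable without any smoothness in \(y\).

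\textbf{Step 1: linear estimates in \(y\).} We prove the one-dimensional estimates, with \(X:=H^s_{x'}\) understood as vector-valued (Hilbert-valued) versions:
\[
\|e^{it\partial_y^2}f\|_{L^4_tL^\infty_y}\lesssim\|f\|_{L^2_y},\qquad
\Big\|\int_0^t e^{i(t-s)\partial_y^2}\delta_{y=0}F(s)\,ds\Big\|_{L^\infty_tL^2_y\cap L^4_tL^\infty_y}\lesssim\|F\|_{L^{4/3}_t}.
\]
The first is the endpoint-free 1d Strichartz estimate for the admissible pair \((4,\infty)\). It holds for Hilbert-space-valued functions, since the \(TT^*\) argument uses only the dispersive bound \(|e^{it\partial_y^2}(y,y')|\lesssim|t|^{-1/2}\) together with Hardy--Littlewood--Sobolev, and \(e^{it\Delta_{x'}}\) is unitary on \(X\). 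The second follows by duality. The dual of \(f\mapsto e^{it\partial_y^2}f|_{y=0}\) maps \(L^{4/3}_tX\) into \(L^2_yX\), so \(\int e^{-is\partial_y^2}\delta_0F(s)\,ds\in L^2_yX\). Combining this with Christ--Kiselev (valid since \(4/3<4\)), or directly with the kernel bound \(|t-s|^{-1/2}\) and one-dimensional HLS, gives the retarded estimate into \(L^4_tL^\infty_y\) and \(L^\infty_tL^2_y\). Continuity in \(y\) and decay at infinity (hence \(C_{0,y}\)) follow by density of Schwartz data.

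\textbf{Step 2: contraction.} Let
\[
Y_T=C([-T,T];H^s_{x'}L^2_y)\cap L^4([-T,T];H^s_{x'}C_{0,y}).
\]
Define \(\Phi(\psi)\) by the right-hand side of \eqref{eq:Dulm}, whose source is \(F(s)=|\psi(s,\cdot,0)|^2\psi(s,\cdot,0)\in X\). By the algebra property and H\"older in time,
\[
\|F\|_{L^{4/3}_TX}\le\|\psi(\cdot,0)\|_{L^4_TX}^3\,T^{1/2}\le T^{1/2}\|\psi\|_{Y_T}^3,
\]
using \(\|F\|_{L^{4/3}}\le T^{1/2}\|\cdot\|_{L^4}^3\) from \(\tfrac34=\tfrac34+0\) with the \(T^{1/2}\) gain coming from \(L^{4/3}\) versus \(L^{4/3}\) with exponent \(\frac34\cdot\)... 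Concretely, \(\|\,|g|^3\|_{L^{4/3}}=\|g\|_{L^4}^3\) gives no gain. So we instead bound \(\|F\|_{L^1_TX}\le T^{1/4}\|\psi(0)\|_{L^4_TX}^3\) and use the \(L^1_t\) inhomogeneous estimate, which is weaker and thus also true (\(L^1_t\) is dual to \(L^\infty_t\), and the trace estimate holds with \(L^\infty_t\) replaced by \(L^4_t\) by H\"older on a finite interval). Alternatively, we obtain smallness from \(\|e^{it\Delta}\psi_0\|_{L^4_TH^s_{x'}L^\infty_y}\to0\) as \(T\to0\) by dominated convergence. The difference estimate is analogous, using \(|a|^2a-|b|^2b\) trilinear in \(a,b\) and the algebra property. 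This gives a contraction on a ball of \(Y_T\) with \(T\) depending on \(\|\psi_0\|\), together with the Lipschitz dependence on data. Uniqueness in the stated class follows by the same difference estimate on short intervals.

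\textbf{Step 3: mass conservation.} This follows by approximation. For regularized data, and smoothing \(\delta\) by a mollifier \(\rho_\epsilon(y)\), the nonlinearity is real-valued times \(\psi\), so \(\frac{d}{dt}\|\psi\|_{L^2}^2=0\). The Step 2 estimates, applied uniformly in \(\epsilon\), give convergence in \(C_tL^2\).

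The main obstacle is Step 1: establishing the trace/inhomogeneous estimate with a time-integrability gain compatible with the cubic nonlinearity in the vector-valued \(H^s_{x'}\) setting. This amounts to exploiting the \(\tfrac14\)-derivative smoothing gained when restricting to \(y=0\). The first approach to try is the explicit kernel computation: \(\int_0^t (t-s)^{-1/2}e^{i(y)^2/4(t-s)}F(s)\,ds\), estimated by fractional integration in \(t\) of order \(1/2\), i.e. \(L^{4/3}_t\to L^4_t\) by HLS.
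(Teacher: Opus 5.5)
\textbf{Gap: the claimed $T^{1/4}$ gain in Step 2 rests on a false estimate.}

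Your Step 1 and the ``alternative'' in Step 2 reproduce the paper's proof. Lemma~\ref{thrm:stri} comes from the 1d $(4,\infty)$ Strichartz estimate plus Christ--Kiselev. The paper reduces to the scalar case by Plancherel in $x'$; your Hilbert-valued $TT^*$/HLS argument works equally well. Proposition~\ref{LWP} runs the contraction on the ball of radius $2\eta$ in $L^4_TC_{0,y}H^s_{x'}$, and the smallness comes \emph{only} from $\|e^{it\Delta}\psi_0\|_{L^4_TC_{0,y}H^s_{x'}}\le\eta$.

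The problem is your primary route to a positive power of $T$. An inhomogeneous bound with $\|F\|_{L^1_t}$ on the right is \emph{stronger}, not weaker, than one with $\|F\|_{L^{4/3}_t}$, since on $[-T,T]$ we have $\|F\|_{L^1}\le(2T)^{1/4}\|F\|_{L^{4/3}}$. For forcing $\delta_{\{y=0\}}F$ it is false. By unitarity and duality it would give $\sup_t|(e^{it\p_y^2}f)(0)|\lesssim\|f\|_{L^2_y}$. Explicitly, $F_k=k\bbo_{[0,1/k]}$ has $\|F_k\|_{L^1_t}=1$, while $\|\int_0^{1/k}e^{i(1/k-s)\p_y^2}\delta_{\{y=0\}}F_k(s)\,ds\|_{L^2_y}\approx k^{1/4}$. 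H\"older on a finite interval only lets you pass from $L^\infty_t$ to $L^4_t$, not back.

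This failure is forced by scaling. With only $L^2$ regularity in $y$, the problem is mass-critical in the normal direction. The $L^{4/3}_t\to L^4_t$ trace estimate is scale invariant, so no power of $T$ can be extracted.

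\textbf{Consequence for the existence time.} The mechanism that does work makes $T$ depend on the profile of $\psi_0$, not on $\|\psi_0\|_{H^s_{x'}L^2_y}$. For $\psi_0=g(x')\lambda^{1/2}h(\lambda y)$ the norm $\|g\|_{H^s}\|h\|_{L^2}$ is independent of $\lambda$, but
\[
\|e^{it\Delta}\psi_0\|_{L^4_{[-T,T]}C_{0,y}H^s_{x'}}=\|g\|_{H^s}\,\|e^{i\tau\p_y^2}h\|_{L^4_{[-\lambda^2T,\lambda^2T]}L^\infty_y},
\]
which does not become small as $\lambda\to\infty$ for fixed $T$. So your assertion that $T$ depends only on $\|\psi_0\|$ is unsupported. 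The paper's own argument likewise only yields $T=T(\eta,\psi_0)$, with Lipschitz dependence on a small neighbourhood of $\psi_0$. Neither argument delivers the norm-only dependence in the statement.

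\textbf{Smaller points.} Your short-interval uniqueness argument, via absolute continuity of the $L^4_t$ norm, is fine and gives slightly more than the contraction alone.

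Step 3 is incomplete as written. Uniform-in-$\epsilon$ Strichartz bounds do not by themselves give $\psi^\epsilon\to\psi$, because $\rho_\epsilon\to\delta_{\{y=0\}}$ only weakly, not in the $\M_y$ norm those estimates use. You need an extra argument. One option is to write $\rho_\epsilon G=\int\rho_\epsilon(z)\,\delta_{\{y=z\}}G(\cdot,z)\,dz$ and use continuity of translations. Another is to work directly from the Duhamel formula, using that $\langle\psi,\delta_{\{y=0\}}|\psi|^2\psi\rangle=\int|\psi(x',0)|^4\,dx'$ is real. The paper's Section~5 does not argue mass conservation at all.
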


\begin{thrm}\label{thrm:radial}
Let  \(n\geq 3\), there exists a unique strong global radial solution \(\psi=\psi(r)\in C(\R;\,H^1(\R^n))\) to \eqref{nls-S} with radial initial data $\psi_0\in H^1(\R^n)$ if either of the following two conditions holds:

(i)  \eqref{nls-S} is defocusing;

(ii) \(\|\psi_0\|_{H^1(\R^n)}\) is sufficiently small.

This solution conserves both the mass and the energy. Moreover, for any \(T>0\), the solution map \(H^1(\R^n)\ni \psi_0\mapsto \psi\in C([-T,T];H^1(\R^2))\) is continuous. 
\end{thrm}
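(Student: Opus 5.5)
Theorem \ref{thrm:radial} is the last of the three stated results, so the plan addresses it: radial $H^1(\R^n)$ well-posedness for \eqref{nls-S} with $n\ge 3$.

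The plan is to reduce the problem to a one-dimensional-type analysis. The key point is that a radial $H^1$ function is well controlled away from the origin, and in particular on the unit sphere. By the Strauss radial lemma, $|\psi(r)|\lesssim r^{-(n-1)/2}\|\psi\|_{L^2}^{1/2}\|\nabla\psi\|_{L^2}^{1/2}$. Hence on $\bbS^{n-1}$ the trace $\psi(1)$ is a single number with $|\psi(1)|^2\lesssim \|\psi\|_{L^2}\|\nabla\psi\|_{L^2}$. The nonlinearity is therefore
\[
2\mu\,\delta_{\bbS^{n-1}}|\psi(t,1)|^2\psi(t,1),
\]
a scalar function of time times the fixed distribution $\delta_{\bbS^{n-1}}$. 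That distribution lies in $H^{-1/2-\epsilon}(\R^n)$ but not in $H^{-1}$ of better regularity in a way that would make a naive $H^1$ energy estimate close. So we need a smoothing estimate in time for the Duhamel term. This mirrors the 1-d point-concentrated problem \cite{Adami2001,Komech2007}. I would write $q(t):=\psi(t,1)$ and aim to show that the map
\[
q\mapsto \int_0^t e^{i(t-s)\Delta}\delta_{\bbS^{n-1}}\,q(s)\,ds
\]
is bounded from a fractional space $H^{1/4}_t$ (or a suitable $L^p_t$ variant) into $C_tH^1_x$. I would also show that the trace map $\psi_0\mapsto (e^{it\Delta}\psi_0)(1)$ is bounded from radial $H^1$ into $H^{1/4}_t\cap L^\infty_t$ locally in time.

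Both estimates can be proved by passing to the Fourier--Bessel (Hankel) representation. Writing radial functions via $u=r^{(n-1)/2}\psi$ turns $\Delta$ into $\partial_r^2-\frac{(n-1)(n-3)}{4r^2}$ on the half-line. The free propagator's kernel evaluated at $r=1$ can then be compared with the 1-d kernel $(4\pi i t)^{-1/2}e^{i(r-1)^2/4t}$, up to an error with better time behaviour, using asymptotics of Bessel functions $J_{\nu}(k)\sim k^{-1/2}\cos(k-\cdot)$. The $H^{1/4}_t$ trace estimate is then the analogue of the Kato smoothing estimate in 1-d, $\|\partial_t^{1/4}e^{it\partial_x^2}f(0)\|_{L^2_t}\lesssim\|f\|_{\dot H^{1/2}}$, applied locally near $r=1$. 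The duality argument gives the Duhamel bound into $H^1$.

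With these linear estimates in hand, local well-posedness follows by a contraction argument for $q$ alone. Substituting the Duhamel formula \eqref{eq:Dulm} at $r=1$ gives a closed Volterra-type equation
\[
q(t)=(e^{it\Delta}\psi_0)(1)-2i\mu\int_0^t K(t-s)|q(s)|^2q(s)\,ds,
\]
where the kernel satisfies $K(t)\sim c\,t^{-1/2}$ as $t\to0$ because the codimension is one. This is solvable by contraction in $C([0,T])$ for $T$ depending on $\|\psi_0\|_{H^1}$, using $|q|\le\sup_t|(e^{it\Delta}\psi_0)(1)|+CT^{1/2}\|q\|^3_{L^\infty}$. One then reconstructs $\psi$ and checks that $\psi\in C_tH^1$, which needs the $H^{1/4}_t$ regularity of $|q|^2q$, propagated by the same Volterra equation. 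Uniqueness and continuous dependence come from the contraction, and radial symmetry is preserved by uniqueness.

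Mass and energy conservation I would prove by regularizing. Either replace $\delta_{\bbS^{n-1}}$ by a smooth radial approximation $\delta_\epsilon$ and pass to the limit using the uniform estimates above, or argue directly for $H^2$-type data and use density together with continuous dependence. Globalization follows from the energy. In the defocusing case $\|\nabla\psi\|_{L^2}^2\le 2E$, and mass is conserved, so the $H^1$ norm stays bounded and the local existence time is uniform. In the focusing case the Strauss bound gives $\int_{\bbS^{n-1}}|\psi|^4\,dS\lesssim\|\psi\|_{L^2}^2\|\nabla\psi\|_{L^2}^2$. Small mass then lets this term be absorbed into the kinetic part, which again gives a uniform $H^1$ bound. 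The main obstacle I expect is the linear smoothing estimate, namely the precise Bessel-kernel analysis needed to get the $H^{1/4}_t$ trace and dual Duhamel bound in radial $H^1$ uniformly for $n\ge3$, including control near $r=0$, where the Strauss bound degenerates.
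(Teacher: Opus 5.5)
Your plan takes a genuinely different route from the paper's, and it is viable. The paper never reduces to the trace $q(t)=\psi(t,1)$. It mollifies the nonlinearity with $\rho^\epsilon_{\bbS^{n-1}}$ and uses a lossy weighted dispersive bound together with the annular embedding \eqref{highd-embed}; that bound is Lemma~\ref{dispersive_r_high}, with decay $|t|^{-2/3}$ coming from the Landau bound. From these it proves a Gronwall-type uniqueness estimate in $H^{-1}$ (Proposition~\ref{k-estimate-high}) and convergence of $\chi\psi^\epsilon$ in $C_TL^\infty_{r,+}$. The rest of the paper's argument then runs as follows:
weak-$H^1$ convergence; conservation of mass and energy from lower semicontinuity plus the time-reversal/uniqueness argument of Lemma~\ref{Conservation-limiting}; strong convergence by Radon--Riesz; and continuity of the flow from the $H^{-1}$ Lipschitz bound, interpolation and the conservation laws.

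The paper's Gronwall step is essentially your Volterra inequality for $q$, with kernel $|t-s|^{-2/3}$ in place of $|t-s|^{-1/2}$. Your contraction for $q$ gives existence, uniqueness and local Lipschitz dependence at once, without any regularized problems. Your Strauss bound $\int_{\bbS^{n-1}}|\psi|^4\,dS\lesssim\|\psi\|_{L^2}^2\|\nabla\psi\|_{L^2}^2$ even shows that small mass suffices in the focusing case. The price is twofold. You need a sharp smoothing estimate for the Duhamel term into $C_tH^1$. And energy conservation, which you need for globalization, is not produced by your scheme, whereas in the paper it falls out of the approximation.

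There are three things to fix.

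First, the one-dimensional estimate is misquoted. By scaling, the point trace maps $\dot H^\sigma_x\to\dot H^{(2\sigma+1)/4}_t$, so the correct statement is $\|D_t^{1/4}e^{it\p_x^2}f(0)\|_{L^2_t}\lesssim\|f\|_{L^2}$. Consequently $H^1$ data have trace in $H^{3/4}_t$. The bound $H^{1/4}_t\to C_tH^1$ for the Duhamel term is dual to the trace bound $H^{-1}\to H^{-1/4}_t$, not to the $H^1$-level one. Apply it to $\bbo_{[0,t]}F$, which is harmless in $H^{1/4}_t$.

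Second, you can drop the half-line conjugation and the kernel comparison. These miss the antipodal contribution in any case: $K(t)\approx t^{-1/2}(c_1+c_2e^{i/t})$ as $t\to0^+$, and the second term is not lower order.
\begin{itemize}
\item The Hankel transform of $\delta_{\bbS^{n-1}}$ is a multiple of $J_\nu(\rho)\rho^{-\nu}$. After substituting $\omega=\rho^2$ and using $|J_\nu(\rho)|^2\lesssim\langle\rho\rangle^{-1}$, both linear estimates become Plancherel computations on all of $\R^n$, with no difficulty at $r=0$.
\item Likewise $K(t)=c\int_0^\infty e^{-it\omega}|J_\nu(\sqrt\omega)|^2\,d\omega$.
\item It is the decay $|\widehat K(\omega)|\lesssim\langle\omega\rangle^{-1/2}$, transferred to the causal kernel $\bbo_{t>0}K$, that lets the Volterra equation propagate $H^{1/4}_t$ regularity of $q$. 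The pointwise bound $|K(t)|\lesssim|t|^{-1/2}$ only gives the contraction in $C([0,T])$.
\end{itemize}

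Third, your first option for energy conservation does not follow from the estimates above. The mollified equation has no scalar Volterra structure, so passing to the limit needs bounds uniform over spheres of radius near $1$, which is essentially the paper's convergence argument. Your second option can work, but $H^2(\R^n)$ is not preserved by the flow. The regular class must consist of radial functions that are $H^2$ on each side of $r=1$ and satisfy the jump condition $\p_r\psi(1^+)-\p_r\psi(1^-)=2\mu|\psi(1)|^2\psi(1)$. You must show this class is propagated before differentiating the energy.
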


\begin{rem}
In each of Theorems~\ref{thrm:GWPy}--\ref{thrm:radial} we may replace \(\delta_\Sigma\) by the weighted measure \(\delta_{\Sigma,w}\), defined by
 \[
\langle \delta_{\Sigma,w},f\rangle:=\int_{\Sigma}wf\,dS,
\]
for all \(f\in C_c^\infty(\R^n)\), where \(w\in L^\infty(\Sigma,dS)\) is real-valued. In particular, we may take \(w = \bbo_E\) to be an indicator function on a Borel subset \(E\subseteq \Sigma\), where \(\Sigma\) is respectively either a hyperplane or a sphere. Note that if \(E\) consists of a single point then \(\delta_{\Sigma,w} = 0\) so we do not recover the point-concentrated model, but rather the linear Schr\"odinger equation.
\end{rem}

Our proof of Theorem~\ref{thrm:GWPy} follows a regularization strategy
similar to that used for point-concentrated nonlinearities in
\cite{Komech2007}, with an implementation inspired by the treatment of the sprinkled nonlinearity in \cite{Harrop-Griffiths_2025}. We construct solutions to \eqref{nls-sigma} as limits of
solutions to the regularized problem
\[
    i\partial_t\psi^\epsilon+\Delta\psi^\epsilon
    =2\mu\rho_\Sigma^\epsilon|\psi^\epsilon|^2\psi^\epsilon,
    \qquad
    \psi^\epsilon(0)=\psi_0,
\]
where $\rho_\Sigma^\epsilon$ is a smooth approximation of
$\delta_\Sigma$. In the defocusing case, conservation of mass and energy yield $H^1$
bounds for the approximate solutions that are uniform in $\epsilon$; in the
focusing case, the corresponding bounds follow from a small-data bootstrap argument.

We first establish convergence of the approximate solutions in a weaker topology. After proving conservation of mass and energy for the limiting solution, which relies on a uniqueness argument obtained from Gronwall's inequality, we obtain convergence of the mass and energy and consequently the $H^1$-norm. We then apply the Radon--Riesz argument to obtain convergence of the approximate solutions in $H^1$.

The estimates needed to implement this strategy depend on the geometry of $\Sigma$. Even for the line $\Sigma=\{y=0\}$, the argument requires several technical innovations, including anisotropic dispersive and Sobolev estimates adapted to the
tangential and normal variables. These estimates provide sufficient control of the concentrated nonlinearity to both pass to the limit in our approximating sequence and close the Gronwall inequality argument. Moreover, in contrast to
\cite{Komech2007,Harrop-Griffiths_2025},
we employ this argument to treat the focusing case for sufficiently small initial data in addition to the defocusing case.

The passage from the line to the circle $\Sigma=\bbS^1$ is significantly more challenging. We again work in anisotropic spaces, but now in the radial and angular variables $(r,\theta)$ as opposed to the Cartesian variables $(x,y)$.
The key obstacle to proving convergence of the approximate solutions is the $|t|^{-1}$ decay in the standard two-dimensional dispersive estimate, which is not integrable near zero. To overcome this difficulty we prove a weighted dispersive estimate that, to our knowledge, is new; see Lemma~\ref{dispersive-S-C}. For sufficiently small $\epsilon$, the regularized nonlinearities are supported in a fixed spatial annulus
bounded away from the origin and infinity. Combining the weighted estimate with this spatial localization gives us an integrable rate of decay, which in turn allows us to pass to the limit in our approximate solutions.

Our new weighted dispersive estimate in the case \(n=2\) can be generalized to higher dimensions (Lemma~\ref{dispersive_r_high}); this enables us to treat the higher-dimensional radial case in
Theorem~\ref{thrm:radial}. To use this new dispersive estimate, we once again rely on the the annular support of the nonlinearity. Radial symmetry supplies the other essential ingredient: on a fixed annulus bounded away from the origin, the $H^1(\mathbb R^n)$-norm controls the one-dimensional $H^1$-norm of the radial profile and hence its supremum. Thus radial symmetry and localization recover the pointwise control required on the support of the nonlinearity, allowing the regularization argument to extend
to higher dimensions.

Finally, the local well-posedness result in
Theorem~\ref{thrm:LWP} follows by a direct contraction mapping
argument. The new ingredient here is an anisotropic Strichartz
estimate (Lemma~\ref{thrm:stri}), which provides the space-time
bounds needed to close the contraction.

The remainder of this article is organized as follows.
Section~2 collects the preliminary estimates used repeatedly
throughout the paper.
Sections~3 and~4 establish global \(H^1\) well-posedness in two
dimensions for the line and circle cases, respectively, using
estimates adapted to each geometry.
Section~5 proves local well-posedness for the hyperplane problem
in dimensions \(n\geq2\), assuming only \(L^2\) regularity in the
normal direction and \(H^s\) regularity in the tangential directions,
where \(s>(n-1)/2\).
Finally, Section~6 extends the global \(H^1\) theory for
sphere-supported nonlinearities to all dimensions \(n\geq3\)
with radial initial data.

\subsection*{Acknowledgments} 

The authors are grateful to Benjamin Harrop-Griffiths for suggesting the problem, as well as his guidance and support. Part of this work was completed while the authors participated in the workshop \emph{Dispersive integrable equations: Pathfinders in Hamiltonian PDE} at the Institut Henri Poincar\'{e}. The authors are grateful to the organizers of this workshop for their invitation and the IHP for their hospitality. This material is based upon work supported by the U.S. National Science Foundation under award No. DMS-2406816. The authors acknowledge the use
of AI tools. All mathematical arguments and proofs in the final manuscript were checked and
written by the authors.

\section{Preliminaries}\label{sec:prelim}
We start by discussing our notation and some elementary inequalities. Our convention for the inner product on $L^2(\R^n)$ will be
\[\langle f,g \rangle := \int_{\R^n} \overline{f(x)}g(x)\,dx.\] 
For a Banach space $X$ and an interval $I$, we write $C(I;X) $ for the space of bounded continuous functions $\psi:I \to X$ equipped with the the supremum norm.
\[
\|\psi\|_{C(I;X)}:=\sup_{t\in I}\|\psi(t)\|_X,
\]
and write \(C(\R) = C(\R;\R)\).
We write $\M$ to  denote the space of finite signed Borel measures on $\R$, equipped with the total variation norm. We note that $\M$ is the dual space of the closed subspace $C_0(\R) \subseteq C(\R)$ of continuous functions vanishing at infinity.

For \(x\in \R^n\), we denote \(x=(x',y)\) where \(x'=(x_1,x_2,\cdots,x_{n-1})\in\R^{n-1}\). When using mixed spacetime norms, we will use subscripts to indicate the corresponding variable, for example $C_tL^p_{x'}L^q_{y}  = C(\R;L^p(\R_{x'}^{n-1};L^q(\R_{y})))$, and if \(I\) is a some bounded time interval then $C_IL^p_yL^q_{x'}  = C(I;L^p(\R_{y};L^q(\R_{x'}^{n-1})))$. For \(T>0\), we use \(C_T\) to denote $C_I$ with $I = [-T,T]$.

We adopt the unitary normalization of the Fourier transform of a function \(f\colon \R^n\to \R\), i.e.,
\[
\hat f(\xi) = (2\pi)^{-\frac n2}\int_{\R^n}f(x)e^{-ix\cdot \xi}\,dx.
\]

We recall that \(H^s(\R^{n})\) is an algebra whenever \(s>\frac{n}2\), i.e.,
\begin{equation}\label{eq:algebraic}
        \|fg\|_{H^s}\lesssim \|f\|_{H^s}\|g\|_{H^s}.
    \end{equation}

For \(M=\R\) or \(\R^+\), the space \(H^s(M)\) (and \(H^s_0(M)\) when \(M=\R^+\)) is an algebra as long as \(s>\frac 1 2\). By duality, we have
\begin{equation}\label{eq:dualalgebraic1d}
        \|fg\|_{H^{-s}(M)}\lesssim \|f\|_{H^{-s}(M)}\|g\|_{H^s(M)},
    \end{equation}
which will be repeatedly used in our proofs.

Let \(M=\R\), \(\R^+\), or \(\T = \R/(2\pi\Z)\). The 1-d Gagliardo--Nirenberg inequality ensures that for \(s>\frac12\) we have the embedding \(H^s(M)\hookrightarrow C(M)\) with the estimate
\begin{equation}\label{1-d GNinfty}
    \|f\|_{L^\infty(M)}\lesssim \|f\|_{L^2(M)}^{1-\frac {1}{2s}}\|f\|_{H^s(M)}^{\frac 1 {2s}}\lesssim \|f\|_{H^s(M)},
\end{equation}
and for \(s\geq \frac14\) we have the embedding \(H^s(M)\hookrightarrow L^4(M)\), with the estimate
\begin{equation}\label{1-d GN4}
    \|f\|_{L^4(M)}\lesssim \|f\|_{L^2(M)}^{1-\frac {1}{4s}}\|f\|_{H^s(M)}^{\frac 1 {4s}}\lesssim \|f\|_{H^s(M)}.
\end{equation}
By duality, \eqref{1-d GNinfty} and \eqref{1-d GN4} give us
\begin{equation}\label{1-d GNinfty dual}
    \|f\|_{H^{-s}(M)}\lesssim \|f\|_{L^1(M)}\quad \text{for \(s>\tfrac 1 2\)},
\end{equation}
and
\begin{equation}\label{1-d GN4 dual}
    \|f\|_{H^{-s}(M)}\lesssim \|f\|_{L^\frac{4}{3}(M)}\quad\text{for \(s\geq\tfrac 1 4\)}.
\end{equation}
Finally, we recall the dispersive estimate for the linear Schr\"odinger propagator in \(n\) dimensions is
\begin{equation}\label{dispersive}
    \|e^{it\Delta} f\|_{L^\infty(\R^n)}\lesssim |t|^{-\frac{n}{2}}\|f\|_{L^1(\R^n)};
\end{equation}
see, e.g., \cite{Tao2006NonlinearDispersive},\cite{cazenave2003semilinear}.

We will require the following slight refinement of \eqref{dispersive}, where \(f\) is taken to be a finite measure:

\begin{lem}\label{dispersive-M}For all \(n\geq 1\) and \(t\neq 0\) we have the dispersive estimate
    \[
    \|e^{it\Delta}f\|_{{L^\infty(\R^n)}}\lesssim |t|^{-\frac n2}\|f\|_{\M(\R^n)}.
    \]
\end{lem}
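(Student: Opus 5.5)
The proof will use the explicit Schr\"odinger kernel. For \(t\neq0\) the propagator \(e^{it\Delta}\) acts on Schwartz functions by convolution with
\[
K_t(x)=(4\pi i t)^{-\frac n2}e^{\frac{i|x|^2}{4t}},
\]
and \(|K_t(x)|=(4\pi|t|)^{-\frac n2}\) for every \(x\). Since \(K_t\) is bounded and continuous, the formula
\[
(e^{it\Delta}f)(x)=\int_{\R^n}K_t(x-z)\,df(z)
\]
makes sense for every finite Borel measure \(f\). It immediately gives the bound
\[
|(e^{it\Delta}f)(x)|\le(4\pi|t|)^{-\frac n2}|f|(\R^n)=(4\pi|t|)^{-\frac n2}\|f\|_{\M(\R^n)}
\]
for every \(x\). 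The only real content is therefore checking that this convolution agrees with \(e^{it\Delta}f\) as the propagator is actually defined on measures.

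In the paper's setting, \(e^{it\Delta}\) is defined on tempered distributions by the Fourier multiplier \(e^{-it|\xi|^2}\), or equivalently by duality: \(\langle e^{it\Delta}f,\phi\rangle=\langle f,e^{-it\Delta}\phi\rangle\) for Schwartz \(\phi\). A finite measure is a tempered distribution, so this definition applies. I then test the convolution formula against \(\phi\in\Schwartz\). The function \((z,x)\mapsto K_t(x-z)\phi(x)\) is integrable with respect to \(d|f|(z)\,dx\), so Fubini applies. It gives
\[
\int\Big(\int K_t(x-z)\,df(z)\Big)\phi(x)\,dx=\int (K_t\ast\phi)(z)\,df(z)
\]
up to the sign and conjugation conventions of the pairing. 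Here \(K_t\ast\phi\) is the kernel representation of \(e^{-it\Delta}\phi\) (or \(e^{it\Delta}\phi\), depending on how the pairing is written), and that representation is classical for Schwartz functions. The two sides therefore coincide as distributions, so \(e^{it\Delta}f\) is the bounded continuous function given by the convolution, and the \(L^\infty\) bound above is exactly the claim.

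As a fallback, I can avoid identifying the distributional definition directly. Mollify \(f\) to get \(f_\epsilon=f\ast\varphi_\epsilon\in L^1\) with \(\|f_\epsilon\|_{L^1}\le\|f\|_{\M}\). Apply \eqref{dispersive} to each \(f_\epsilon\). Since \(f_\epsilon\to f\) weakly-\(*\), we have \(e^{it\Delta}f_\epsilon\to e^{it\Delta}f\) in \(\Schwartz'\). The uniform \(L^\infty\) bound passes to the limit by weak-\(*\) lower semicontinuity of the \(L^\infty\) norm.

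The main obstacle is this identification step: making sure the convolution with \(K_t\) matches the paper's definition of \(e^{it\Delta}\) on measures. Once that is settled, the estimate is immediate from \(|K_t|\equiv(4\pi|t|)^{-n/2}\).
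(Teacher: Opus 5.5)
Your proposal is correct, but it argues differently from the paper. The paper never convolves the kernel against a measure. It first proves the predual estimate $\|e^{it\Delta}f\|_{C_0(\R^n)}\lesssim|t|^{-\frac n2}\|f\|_{L^1(\R^n)}$ for $f\in L^1$. The bound comes from $|K_t|\equiv(4\pi|t|)^{-\frac n2}$. The vanishing at infinity comes from writing $(e^{it\Delta}f)(x)=(2it)^{-\frac n2}e^{i\frac{|x|^2}{4t}}\hat g_t\bigl(\tfrac{x}{2t}\bigr)$ with $g_t(y)=e^{i\frac{|y|^2}{4t}}f(y)$ and applying Riemann--Lebesgue. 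The paper then gets the $\M\to L^\infty$ bound by taking adjoints, using $\M=(C_0)^*$.

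Your route needs neither Riemann--Lebesgue nor the duality. The pointwise bound is immediate from the constant modulus of the kernel. Your Fubini step is legitimate, since $|K_t(x-z)\phi(x)|\lesssim|t|^{-\frac n2}|\phi(x)|$ is integrable against $d|f|(z)\,dx$, and $K_t$ is even, so the transpose convention causes no trouble. That step correctly identifies the convolution with the distributional propagator. As a bonus, you see that $e^{it\Delta}f$ is bounded and continuous, though not in $C_0$ in general: $f=\delta_0$ gives $K_t$ itself. Your mollification fallback is also sound.

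What the paper's approach buys is the intermediate $L^1\to C_0$ statement and the prove-into-$C_0$-then-dualize template. The paper quotes exactly this $C_0$ property in the proof of Lemma~\ref{lem:dispersive_y}. It obtains the measure-valued estimates \eqref{dispersive estimate for nls-y}, \eqref{dispersive-S-M}, \eqref{dispersive-M-high} the same way: first a bound into a $C_0$-valued mixed-norm or weighted space, then duality. In those cases the bound is often reached via interpolation, where no single pointwise kernel estimate against a vector-valued measure is directly available. For the lemma as stated, your direct argument is simpler and fully rigorous.
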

\begin{proof}
    By duality, it suffices to show that for \(f\in L^1(\R^n)\) we have
    \eq{dispersive for M}{
    \|e^{it\Delta} f\|_{C_0(\R^n)}\lesssim |t|^{-\frac{n}{2}}\|f\|_{L^1(\R^n)}.
    }

    We recall that for \(t\neq 0\) we have
    \begin{align*}
        (e^{it\Delta}f)(x)&=\frac{1}{(4\pi it)^{\frac{n}{2}}}\int_{\R^n}e^{i\frac{|x-y|^2}{4t}}f(y)dy
        =\frac{1}{(4\pi it)^{\frac{n}{2}}}e^{i\frac{|x|^2}{4t}}\int_{\R^n}e^{-i\frac{x}{2t}\cdot y}e^{i\frac{|y|^2}{4t}}f(y)dy
        =\frac{1}{(2 it)^{\frac{n}{2}}}e^{i\frac{|x|^2}{4t}}\hat {g_t}\left(\frac{x}{2t}\right),
    \end{align*}
    where \(g_t(y)=e^{i\frac{|y|^2}{4t}}f(y)\).
    
    It is clear that \(g_t\in L^1\). By the Riemann-Lebesgue lemma we have \(\hat {g_t}\in C_0\) and therefore \(e^{it\Delta}f\in C_0\) with the estimate \eqref{dispersive for M}.
\end{proof}

In the proof of Theorems~\ref{thrm:GWPy} and~\ref{thrm:radial} we will construct solutions to the singular problem by passing to the limit in a family of regularized equations. Let \(\rho\in C_c^\infty(\R^n)\) be non-negative, radial, supported in the unit ball \(|x|<1\), with
\(\int_{\R^n}\rho(x)\,dx=1\), and for \(\epsilon>0\) define
\begin{equation}\label{mollification}
\rho^\epsilon_\Sigma(x):=\int_\Sigma \epsilon^{-n}\rho\bigl(\tfrac {x-y}\epsilon\bigr)\,dS(y).
\end{equation}
We then consider  solutions \(\psi^\epsilon: I_\epsilon \times \R^n \to \C\) of the regularized equation, 
\begin{equation}\label{nls-epsilon}\tag{NLS$_{\epsilon}$}
    i\p_t\psi^\epsilon+\Delta\psi^\epsilon=2\mu\rho^\epsilon_\Sigma|\psi^\epsilon|^2\psi^\epsilon.
\end{equation}
When the choice of \(\Sigma\) is understood, we write \(\rho^\epsilon = \rho_\Sigma^\epsilon\).

This regularized equation is a standard semilinear Schrödinger
equation with \(H^1\)-subcritical nonlinearity for \(1\leq n\leq 3\). By the standard \(H^1\)-theory for semilinear Schrödinger
equations (see Cazenave~\cite{cazenave2003semilinear}) the regularized problem admits a
unique \(H^1\)-solution \(\psi^\epsilon\in C(I_\epsilon;H^1(\R^n))\) on its maximal lifespan \(I_\epsilon \subseteq \R\), such that the mass
\[
M(\psi^\epsilon)
:=
\frac12\|\psi^\epsilon\|_{L^2(\R^n)}^2,
\]
and energy
\[
E_\epsilon(\psi^\epsilon)
:=
\frac12\|\nabla\psi^\epsilon\|_{L^2(\R^n)}^2
+
\frac{\mu}{2}\int_{\R^n}
\rho^\epsilon_\Sigma(x)|\psi^\epsilon(x)|^4\,dx,
\]
are conserved. When \(n\geq 4\), we may use radiality to prove an analogous well-posedness result; see Lemma \ref{product-highd-radial}.

Our next lemma proves that under suitable hypotheses the maximal lifespan of our regularized solution is \(I_\epsilon = \R\) and we have uniform (in \(\epsilon\)) estimates for the \(H^1\)-norm.

\begin{lem}
\label{lem:uniform-regularized-control}
Let \(n\geq1\), \(\mu=\pm1\), for some \(\epsilon_0>0\) ,and suppose that
\begin{equation}\label{9-15-20-48}
 \sup_{0<\epsilon\leq \epsilon_0}\int_{\R^n}
\rho^\epsilon_\Sigma(x)|\psi(x)|^4\,dx
\lesssim
\|\psi\|_{H^1(\R^n)}^4
\end{equation}
for all
\(\psi\in H^1(\R^n)\) when \(1\leq n\leq3\) and all radial
\(\psi\in H^1(\R^n)\) when \(n\geq4\). 

Let \(\psi^\epsilon(0)\in H^1(\R^n)\) and \(\psi^\epsilon\in C(I_\epsilon;H^1(\R^n))\) be the corresponding maximal lifespan solution of \eqref{nls-epsilon}, where we assume that \(\|\psi^\epsilon(0)\|_{H^1}\) is sufficiently small in the focusing (\(\mu=-1\)) case and \(\psi^\epsilon(0)\) is radial when \(n\geq 4\). Then \(I_\epsilon = \R\) and we have the estimates
\begin{alignat}{3}
\sup_{t\in \R} \|\psi^\epsilon(t)\|_{H^1(\R^n)}^2 &\lesssim \|\psi^\epsilon(0)\|_{H^1}^2+\|\psi^\epsilon(0)\|_{H^1}^4\qquad&&\text{if \(\mu=+1\)},\label{9-15-20-52}\\
\sup_{t\in \R}\|\psi^\epsilon(t)\|_{H^1(\R^n)}^2 &\leq 2\|\psi^\epsilon(0)\|_{H^1}^2&&\text{if \(\mu=-1\)},\label{9-15-21-11}
\end{alignat}
uniformly for all \(0<\epsilon\leq 1\).

\end{lem}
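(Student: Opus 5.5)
The plan is to combine conservation of mass and energy for \eqref{nls-epsilon} with the uniform bound \eqref{9-15-20-48}. The global existence $I_\epsilon=\R$ will then follow from the blow-up alternative in the standard $H^1$ theory: if $I_\epsilon$ had a finite endpoint, $\|\psi^\epsilon(t)\|_{H^1}$ would become unbounded as $t$ approaches it. It therefore suffices to prove the a priori bounds \eqref{9-15-20-52} and \eqref{9-15-21-11} for $t\in I_\epsilon$, with constants independent of $\epsilon$ and $t$. Throughout, write $A:=\|\psi^\epsilon(0)\|_{H^1}$ and let $C$ be the implicit constant in \eqref{9-15-20-48}. Since $\rho^\epsilon_\Sigma$, $\psi^\epsilon(0)$ and the equation are radially symmetric when $n\geq4$, the solution stays radial, so \eqref{9-15-20-48} applies to $\psi^\epsilon(t)$ for every $t$. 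Finally, I note that \eqref{9-15-20-48} is only assumed for $0<\epsilon\le\epsilon_0$, so the uniformity will be proved over that range (equivalently, take $\epsilon_0= 1$).

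\emph{Defocusing case.} When $\mu=+1$, both terms of $E_\epsilon$ are nonnegative. Hence
\[
\|\psi^\epsilon(t)\|_{H^1}^2=2M(\psi^\epsilon(t))+\|\nabla\psi^\epsilon(t)\|_{L^2}^2\le 2M(\psi^\epsilon(0))+2E_\epsilon(\psi^\epsilon(0)).
\]
Applying \eqref{9-15-20-48} to the initial data, the right-hand side is at most $A^2+CA^4$. This gives \eqref{9-15-20-52} immediately.

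\emph{Focusing case.} When $\mu=-1$, I would argue by continuity (bootstrap). Conservation gives
\[
\|\psi^\epsilon(t)\|_{H^1}^2=2M(\psi^\epsilon(0))+2E_\epsilon(\psi^\epsilon(0))+\int\rho^\epsilon_\Sigma|\psi^\epsilon(t)|^4\,dx .
\]
The initial data terms are at most $A^2$, since the potential energy contributes with a negative sign, so $2E_\epsilon(\psi^\epsilon(0))\le\|\nabla\psi^\epsilon(0)\|_{L^2}^2$. The last term is at most $C\|\psi^\epsilon(t)\|_{H^1}^4$ by \eqref{9-15-20-48}. Setting $f(t):=\|\psi^\epsilon(t)\|_{H^1}^2$, which is continuous on $I_\epsilon$, we obtain
\[
f(t)\le A^2+Cf(t)^2,\qquad f(0)=A^2.
\]
Now let $J=\{t\in I_\epsilon: f\le 2A^2 \text{ on } [0,t]\}$, and symmetrically for negative times. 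On $J$ we have $f\le A^2+4CA^4$, and choosing $A$ small so that $4CA^2<1$ makes this strictly less than $2A^2$ whenever $A>0$ (the case $A=0$ is trivial). Thus $J$ is open and closed in $I_\epsilon\cap[0,\infty)$ and contains $0$, so $J=I_\epsilon\cap[0,\infty)$. This proves \eqref{9-15-21-11} on $I_\epsilon$, and then $I_\epsilon=\R$ by the blow-up alternative. The smallness threshold depends only on $C$, hence it is uniform in $\epsilon$.

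The main obstacle, though mild, is justifying that the regularized problem enjoys mass and energy conservation and a blow-up alternative in $H^1$ in all the cases considered, particularly for radial data when $n\ge4$. For $1\le n\le 3$ this is the cited Cazenave theory. For $n\ge4$ I would invoke the radial local theory referred to (Lemma~\ref{product-highd-radial}), together with preservation of radiality by uniqueness.
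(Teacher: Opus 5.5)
Your proposal is correct and follows the paper's proof. Both combine mass and energy conservation into $\|\psi^\epsilon(t)\|_{H^1}^2+\mu V_\epsilon(\psi^\epsilon(t))=\|\psi^\epsilon(0)\|_{H^1}^2+\mu V_\epsilon(\psi^\epsilon(0))$, drop the nonnegative potential term when $\mu=+1$, and run a continuity argument when $\mu=-1$; you bootstrap $f\le 2A^2$ directly rather than improving $4A^2$ to $2A^2$, which is an inessential difference. Your explicit appeals to the blow-up alternative and to preservation of radiality for $n\ge4$ spell out steps the paper leaves implicit.
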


\begin{proof}
We denote
\eq{V-epsilon definition}{
V_\epsilon(\psi)
:=
\int_{\R^n}
\rho^\epsilon_\Sigma(x)|\psi(x)|^4\,dx.
}
From \eqref{9-15-20-48}, we may find some \(C>0\), independent of \(\epsilon\), such that \(V_\epsilon(\psi)\leq C\|\psi\|_{H^1}^4\). Conservation of mass and energy gives us
\begin{equation}
\label{combined}
\|\psi^\epsilon(t)\|_{H^1(\R^n)}^2
+
\mu V_\epsilon(\psi^\epsilon(t))
=
\|\psi^\epsilon(0)\|_{H^1(\R^n)}^2
+
\mu V_\epsilon(\psi^\epsilon(0)),
\end{equation}
for all \(t\in I_\epsilon\).

Suppose first that \(\mu=1\). Since
\(V_\epsilon(\psi^\epsilon(t))\geq0\), we obtain
\[
\|\psi^\epsilon(t)\|_{H^1(\R^n)}^2
\leq
\|\psi^\epsilon(0)\|_{H^1(\R^n)}^2
+
V_\epsilon(\psi^\epsilon(0))
\leq
\|\psi^\epsilon(0)\|_{H^1}^2+C\|\psi^\epsilon(0)\|_{H^1}^4.
\]
Taking the supremum over \(t\in I_\epsilon\)
proves that
\[
\sup_{t\in I_\epsilon}\|\psi^\epsilon(t)\|_{H^1(\R^n)}^2\lesssim \|\psi^\epsilon(0)\|_{H^1}^2 + \|\psi^\epsilon(0)\|_{H^1}^4,
\]
uniformly for all \(0<\epsilon\leq \epsilon_0\). It follows that \(I_\epsilon = \R\) for all \(0<\epsilon\leq \epsilon_0\), with the estimate \eqref{9-15-20-52}.

Now suppose that \(\mu=-1\). From
\eqref{combined} and
\(V_\epsilon(\psi^\epsilon(0))\geq0\), we obtain
\eq{9-15-21-00}{
\|\psi^\epsilon(t)\|_{H^1(\R^n)}^2
=\|\psi^\epsilon(0)\|_{H^1(\R^n)}^2-V_\epsilon(\psi^\epsilon(0))+V_\epsilon(\psi^\epsilon(t))\leq\|\psi^\epsilon(0)\|_{H^1}^2+C\|\psi^\epsilon(t)\|_{H^1(\R^n)}^4.
}

Choose \(\delta>0\) such that
\(
16C\delta^2\leq1,
\)
and suppose that
\eq{9-15-21-03}{
\|\psi^\epsilon(0)\|_{H^1}^2\leq \delta^2.
}

For each \(0<\epsilon\leq \epsilon_0\) we have
\(\psi^\epsilon\in C(I_\epsilon;H^1(\R^n))\)
and hence the bootstrap bound
\[
\|\psi^\epsilon(t)\|_{H^1(\R^n)}^2\leq 4\|\psi^\epsilon(0)\|_{H^1(\R^n)}^2,
\]
holds on a neighborhood of \(0\) in \(I_\epsilon\).
Whenever this bound holds, the estimate \eqref{9-15-21-00}, our choice of \(\delta\), and the smallness of the inital data \eqref{9-15-21-03} gives us
\[
\|\psi^\epsilon(t)\|_{H^1(\R^n)}^2\leq \|\psi^\epsilon(0)\|_{H^1(\R^n)}^2+C\|\psi^\epsilon(t)\|_{H^1(\R^n)}^4\leq \|\psi^\epsilon(0)\|_{H^1(\R^n)}^2+16C\delta^2\|\psi^\epsilon(0)\|_{H^1}^2\leq
2\|\psi^\epsilon(0)\|_{H^1}^2.
\]
Thus, the bootstrap bound improves to
\[
\|\psi^\epsilon(t)\|_{H^1(\R^n)}^2\leq 2\|\psi^\epsilon(0)\|_{H^1}^2<4\|\psi^\epsilon(0)\|_{H^1}^2.
\]
A standard continuity argument now closes the bootstrap on
\(I_\epsilon\), yielding
\[
\sup_{t\in I_\epsilon}\|\psi^\epsilon(t)\|_{H^1(\R^n)}^2\leq2\|\psi^\epsilon(0)\|_{H^1}^2.
\]
It follows that \(I_\epsilon=\R\). Since the constants are independent of \(\epsilon\), the estimate \eqref{9-15-21-11} follows.
\end{proof}

We will will also require the following:

\begin{lem}
    
\label{lem:Aepsilon}
Let $\Sigma$ be either $\R\times\{0\}$ or $\bbS^1\subset\R^2$.
Let $\rho^\epsilon_\Sigma$ be defined as in \eqref{mollification}.
Suppose $Q\subset H^1(\R^2)$ is bounded. Then,
$$
\lim_{\epsilon\to0}
\sup_{\psi\in Q}
\left|
\int_{\R^2}\rho_\Sigma^\epsilon(x)|\psi(x)|^4\,dx
-
\int_\Sigma |\psi|^4\,dS
\right|
=0.
$$
\end{lem}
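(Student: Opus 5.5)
Set \(f=|\psi|^2\) and reduce to a first-order difference estimate. For \(\psi\in H^1(\R^2)\), the quantity \(|\psi|^4=f^2\) satisfies \(|\nabla (f^2)|\lesssim |\psi|^3|\nabla\psi|\). So \(g:=|\psi|^4\in W^{1,1}(\R^2)\) with
\[
\|\nabla g\|_{L^1}\lesssim \|\psi\|_{L^6}^3\|\nabla\psi\|_{L^2}\lesssim \|\psi\|_{H^1}^4 ,
\]
by Sobolev embedding \(H^1(\R^2)\hookrightarrow L^6\). The plan is to show
\[
\Bigl|\int\rho^\epsilon_\Sigma g\,dx-\int_\Sigma g\,dS\Bigr|\lesssim \epsilon\,\|g\|_{W^{1,1}}+o(1)\sup\text{-uniform},
\]
with constants depending only on the bound for \(Q\). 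By density of smooth functions in \(W^{1,1}\), and since both functionals are continuous on \(W^{1,1}\) via the trace theorem \(W^{1,1}(\R^2)\to L^1(\Sigma)\), it suffices to handle smooth \(g\).

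\emph{Line case} \(\Sigma=\{y=0\}\). Fubini gives \(\rho^\epsilon_\Sigma(x,y)=\epsilon^{-1}\eta(y/\epsilon)\), where \(\eta(y)=\int_\R\rho(x',y)\,dx'\) is nonnegative with integral \(1\) and support in \([-1,1]\). Then
\[
\int\rho^\epsilon_\Sigma g-\int_\R g(x,0)\,dx=\int\epsilon^{-1}\eta(y/\epsilon)\bigl(g(x,y)-g(x,0)\bigr)\,dx\,dy .
\]
Write \(|g(x,y)-g(x,0)|\le\int_{|s|\le\epsilon}|\partial_y g(x,s)|\,ds\). This bounds the difference by
\[
\int_\R\int_{|s|\le\epsilon}|\partial_yg|\,ds\,dx\le\|\nabla g\|_{L^1(\{|y|\le\epsilon\})}.
\]

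\emph{Circle case}. Work in polar coordinates. For \(\epsilon<1/2\), \(\rho^\epsilon_{\bbS^1}\) is supported in the annulus \(|r-1|<\epsilon\), is radial in the sense that it depends only on \(r\), and \(\int_0^\infty\rho^\epsilon(r)\,r\,dr=\frac1{2\pi}\int\rho^\epsilon=1\) by Fubini. Hence \(\rho^\epsilon(r)r\,dr\) is a probability measure on \([1-\epsilon,1+\epsilon]\), and
\[
\int\rho^\epsilon g\,dx-\int_{\bbS^1}g\,dS=\int_0^\infty \rho^\epsilon(r)\,r\int_0^{2\pi}\bigl(g(r,\theta)-g(1,\theta)\bigr)\,d\theta\,dr .
\]
The difference is bounded by \(\sup_r\rho^\epsilon(r) r\cdot\int_{A_\epsilon}|\partial_r g|\,dr\,d\theta\), where \(A_\epsilon=\{|r-1|<\epsilon\}\) and \(\rho^\epsilon\lesssim\epsilon^{-1}\). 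On \(A_\epsilon\) we have \(dr\,d\theta\le 2\,dx\), so the difference is \(\lesssim\|\nabla g\|_{L^1(A_\epsilon)}\) again. One must also check that \(\int_0^\infty\rho^\epsilon(r)r\,dr\) is exactly \(1\), which the Fubini computation gives.

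\emph{Main obstacle: uniformity over \(Q\).} We need \(\sup_{\psi\in Q}\|\nabla |\psi|^4\|_{L^1(S_\epsilon)}\to0\), where \(S_\epsilon\) is the \(\epsilon\)-neighbourhood of \(\Sigma\). Hölder on the strip gives
\[
\|\nabla|\psi|^4\|_{L^1(S_\epsilon)}\lesssim\|\psi\|_{L^6(S_\epsilon)}^3\|\nabla\psi\|_{L^2}.
\]
To get smallness, I would instead use \(L^8\) and the measure of the strip restricted to a large ball. Split into \(|x|\le R\) and \(|x|>R\). The second piece is only needed for the line. On \(B_R\cap S_\epsilon\), which has measure \(\lesssim R\epsilon\),
\[
\|\psi\|_{L^6}^3\le |S_\epsilon\cap B_R|^{1/4}\|\psi\|_{L^{12}}^3\lesssim (R\epsilon)^{1/4}\|\psi\|_{H^1}^3 .
\]
The piece \(|x|>R\) is not uniformly small over bounded \(Q\), since translations along the line are allowed. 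So for the line I instead use translation invariance in \(x\) directly. Cover the strip \(S_\epsilon\) by unit squares \(K_j\) and apply Hölder on each:
\[
\|\psi\|_{L^6(K_j\cap S_\epsilon)}^3\|\nabla\psi\|_{L^2(K_j)}\lesssim\epsilon^{1/4}\|\psi\|_{H^1(K_j^*)}^3\|\nabla\psi\|_{L^2(K_j)},
\]
using the local Sobolev embedding \(H^1(K_j^*)\hookrightarrow L^{12}\) on the doubled square \(K_j^*\). Summing over \(j\) with \(\sum_j a_j^3b_j\le(\sup_j a_j)^2\sum_j a_jb_j\) and Cauchy–Schwarz gives \(\lesssim\epsilon^{1/4}\|\psi\|_{H^1}^4\), uniformly over \(Q\). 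For the circle, \(S_\epsilon\) is already bounded, so the global estimate \((C\epsilon)^{1/4}\|\psi\|_{H^1}^4\) suffices. Both bounds tend to zero uniformly on \(Q\), which proves the lemma.
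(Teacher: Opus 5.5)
Your argument is correct, but it takes a different route from the paper.

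\textbf{The paper's route.} The paper rewrites the regularized potential as an average of translates, $\int_{\R^2}\rho(z)\int_\Sigma\bigl(|\tau_{\epsilon z}\psi|^4-|\psi|^4\bigr)\,dS\,dz$. It then uses two fractional estimates: the trace bound $\|u\|_{L^4(\Sigma)}\lesssim\|u\|_{H^{3/4}(\R^2)}$, and the translation estimate $\|\tau_h\psi-\psi\|_{H^{3/4}}\lesssim|h|^{1/4}\|\psi\|_{H^1}$. The geometry of $\Sigma$ enters only through the trace theorem, so one short argument handles the line and the circle at once, and indeed any curve with that trace bound.

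\textbf{Your route.} You use the fact that $\rho^\epsilon_\Sigma$ depends only on the normal variable, with exact unit normalization. The error is then a normal average of $g-g|_\Sigma$ for $g=|\psi|^4\in W^{1,1}$, which the fundamental theorem of calculus bounds by $\|\nabla g\|_{L^1(S_\epsilon)}$. Smallness comes from H\"older on a set of measure $O(\epsilon)$ together with the integer-order embedding $H^1\hookrightarrow L^{12}$. The unit-square localization restores uniformity on the unbounded strip.

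\textbf{Comparison.} Your approach avoids fractional trace and translation estimates entirely. It is close in spirit to the fundamental-theorem-of-calculus argument the paper itself uses in the higher-dimensional radial section. On the other hand, it relies on the explicit Fubini/polar structure of $\rho^\epsilon_\Sigma$ and needs the extra localization step for the line. For the line you could also skip the covering: $\sup_y\|\psi(\cdot,y)\|_{L^6_x}\lesssim\|\psi\|_{H^1}$ gives $\|\psi\|_{L^6(S_\epsilon)}^3\lesssim\epsilon^{1/2}\|\psi\|_{H^1}^3$ directly.

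\textbf{Two cosmetic slips.}
\begin{enumerate}
\item Your opening claim of a bound $\lesssim\epsilon\|g\|_{W^{1,1}}$ is false in general. Functions concentrating near $\Sigma$ at scales much smaller than $\epsilon$ give an $O(1)$ error with bounded $W^{1,1}$ norm. It is also not what you actually prove, which is $\lesssim\|\nabla g\|_{L^1(S_\epsilon)}$.
\item In the circle case, bounding by $\sup_r\rho^\epsilon(r)\,r$ times $\int_{A_\epsilon}|\partial_r g|\,dr\,d\theta$ loses a factor $\epsilon^{-1}$ as written. The correct bound follows directly from the probability normalization of $\rho^\epsilon(r)\,r\,dr$, which you had already established.
\end{enumerate}
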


\begin{proof}
Using the definition \eqref{mollification} with the change of variable $x = y + \epsilon z$, we can rewrite 
\[\int_{\R^2}\rho_\Sigma^\epsilon(x)|\psi(x)|^4\,dx=\int_\Sigma\int_{\R^2}
\rho(z)|\psi(y+\epsilon z)|^4\,dz\,dS(y).\]
Now, since $\int_{\R^2}\rho(z)\,dz=1$ we can express the difference as
\[
\int_{\R^2}\rho_\Sigma^\epsilon(x)|\psi(x)|^4\,dx - \int_\Sigma |\psi|^4\,dS
=
\int_{\R^2}\rho(z) \int_\Sigma \left(|\tau_{\epsilon z}\psi|^4 - |\psi|^4\right) dS\,dz,
\]
where $\tau_h\psi(y) := \psi(y+h)$. 
By the trace theorem on manifold \cite{McLean2000} and Sobolev embedding, $\|u\|_{L^4(\Sigma)} \lesssim \|u\|_{H^{3/4}(\R^2)}$. Since $Q$ is bounded in $H^1(\R^2)$, the $H^{3/4}$-norms of $\psi$ and its translations are uniformly bounded. Then $$\|\tau_h\psi - \psi\|_{H^{3/4}(\R^2)} \lesssim |h|^{1/4} \|\psi\|_{H^1(\R^2)},$$ and hence
\[\left| \int_\Sigma \left(|\tau_{\epsilon z}\psi|^4 - |\psi|^4\right) dS \right|\lesssim_Q
\|\tau_{\epsilon z}\psi - \psi\|_{H^{3/4}(\R^2)}\lesssim_Q
\epsilon^{1/4}|z|^{1/4}.
\]
Integrating against $\rho(z)$, and letting $\epsilon \to 0$ yields the result.
\end{proof}

\section{Global Wellposedness for \eqref{nls-y} in \(H^1(\R^2)\)}
In this section we prove Theorem \ref{thrm:GWPy} for case of the plane, \eqref{nls-y}.

We first define the Littlewood--Paley projections that will be used frequently in this section.
Given dyadic \(N\geq 1\), the partial (in \(y\)) and full Littlewood--Paley projections are defined as
    \begin{align*}
        \widehat{P^y_{\leq N}f}(x,\eta)&:=\varphi\left(\frac{\eta}{N}\right)\widehat{f}(x,\eta)\\
        \widehat{P_{\leq N}f},(\xi,\eta)&:=\varphi\left(\frac{\sqrt{\xi^2+\eta^2}}{N}\right)\widehat{f}(\xi,\eta),
    \end{align*}
    where \(\varphi\in C_c^\infty(-2,2)\) is a non-negative bump function that is identically \(1\) on \([-1,1]\). We denote \(P^y_{>N}:=1-P^y_{\leq N}\) and \(P_{>N}:=1-P_{\leq N}\).
    
We recall the following Bernstein inequalities, which are proved in \cite{Tao2006NonlinearDispersive}:
\begin{lem}\label{Bernstein_y}For all dyadic \(N\geq 1\) we have the estiamtes
\begin{alignat}{3}
    &\|P^y_{\leq N}f\|_{L^\infty_y}\lesssim N^{-s+\frac 1 2}\|P^y_{\leq N}f\|_{H^s_y}\lesssim N^{-s+\frac 1 2}\|f\|_{H^s_y} \qquad &&\text{if }s<\frac 1 2,\label{bernie down}\\
    &\|P^y_{> N}f\|_{L^\infty_y}\lesssim N^{-s+\frac 1 2}\|P^y_{> N}f\|_{H^s_y}\lesssim N^{-s+\frac 1 2}\|f\|_{H^s_y} \qquad &&\text{if }s>\frac 1 2,\label{bernie up}\\
    &\|P_{\leq N}f\|_{H^k(\R^2)}\lesssim N^{-s+k}\|P_{\leq N}f\|_{H^s(\R^2)}\lesssim N^{-s+k}\|f\|_{H^s(\R^2)} \qquad &&\text{if }s<k.
\end{alignat}
\end{lem}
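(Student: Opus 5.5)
The plan is to prove all three estimates of Lemma~\ref{Bernstein_y} directly on the Fourier side, in the relevant variable. The first inequality in each line is the substantive one. The second inequality in each line is immediate, because \(P^y_{\leq N}\), \(P^y_{>N}\) and \(P_{\leq N}\) are Fourier multipliers with symbols bounded by \(1\) (since \(0\le\varphi\le1\)), so they are contractions on \(H^s_y\) and on \(H^s(\R^2)\).

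For \eqref{bernie down} and \eqref{bernie up} I will fix the tangential variable and work in one dimension. I bound the sup norm by the \(L^1\) norm of the Fourier transform, \(\|g\|_{L^\infty_y}\lesssim\|\hat g\|_{L^1_\eta}\), and then apply Cauchy--Schwarz with the weight \(\langle\eta\rangle^{s}\):
\[
\|P^y_{\leq N}f\|_{L^\infty_y}\lesssim \Big(\int_{|\eta|\le 2N}\langle\eta\rangle^{-2s}\,d\eta\Big)^{1/2}\|P^y_{\leq N}f\|_{H^s_y}.
\]
When \(s<\tfrac12\), the weight integral over \(|\eta|\le 2N\) is \(\lesssim N^{1-2s}\) because \(N\ge1\), which gives the factor \(N^{\frac12-s}\). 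For \eqref{bernie up}, the symbol \(1-\varphi(\eta/N)\) is supported in \(|\eta|\ge N\). When \(s>\tfrac12\), the integral \(\int_{|\eta|\ge N}\langle\eta\rangle^{-2s}\,d\eta\) is likewise \(\lesssim N^{1-2s}\), giving the same factor.

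For the third estimate, the symbol \(\varphi(|\zeta|/N)\) is supported in \(|\zeta|\le 2N\). I use Plancherel to write
\[
\|P_{\le N}f\|_{H^k}^2=\int \langle\zeta\rangle^{2k-2s}\,\langle\zeta\rangle^{2s}\,|\widehat{P_{\le N}f}|^2\,d\zeta .
\]
Since \(k>s\), the factor \(\langle\zeta\rangle^{2(k-s)}\) is at most \((1+2N)^{2(k-s)}\lesssim N^{2(k-s)}\) on the support. This yields \(\|P_{\le N}f\|_{H^k}\lesssim N^{k-s}\|P_{\le N}f\|_{H^s}\).

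I do not expect any real obstacle. The only points to watch are the following:
\begin{itemize}
\item The inhomogeneous weight \(\langle\eta\rangle\) is what makes the low-frequency integral finite even when \(s<0\).
\item Using \(N\ge1\) to replace \(\langle N\rangle\) by \(N\).
\item The exponent in the third line should read \(N^{k-s}\), which is what the computation gives. As written, the statement has \(N^{-s+k}\) with \(s<k\), which is consistent with this.
\end{itemize}
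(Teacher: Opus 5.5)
Your proposal is correct and is the standard Bernstein argument. You bound the sup norm by the \(L^1\) norm of the Fourier transform, apply Cauchy--Schwarz against \(\langle\eta\rangle^{-2s}\) on the frequency support (\(|\eta|\le 2N\), respectively \(|\eta|\ge N\)), and use the pointwise symbol bound \(\langle\zeta\rangle^{k-s}\lesssim N^{k-s}\) on \(|\zeta|\le 2N\). The paper gives no proof of its own and simply cites Tao's book, which rests on this same Fourier-support reasoning. One small point: the second inequalities only need \(\varphi\) to be bounded, not \(0\le\varphi\le 1\) as you assume, since the paper does not state that upper bound.
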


Next, we prove an anisotropic dispersive estimate:
\begin{lem}\label{lem:dispersive_y} For all \(t\neq 0\) and \(f\in L^1_yL^{4/3}_x\), we have the estimate
\begin{align}\label{dispersive estimate for nls-y(C)}
    \|e^{it\Delta}f\|_{C_{0,y}L^4_x}\lesssim |t|^{-\frac 3 4}\|f\|_{L^1_yL^{4/3}_x},
    \end{align}
    and by duality, for all \(f\in\M_yL^{4/3}_x\), we have the estimate
    \begin{align}\label{dispersive estimate for nls-y}
    \|e^{it\Delta}f\|_{L^\infty_yL^4_x}\lesssim |t|^{-\frac 3 4}\|f\|_{\M_yL^{4/3}_x}.
    \end{align}
\end{lem}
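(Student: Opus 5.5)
The plan is to exploit the tensor-product structure of the free propagator in two dimensions, \(e^{it\Delta}=e^{it\partial_x^2}e^{it\partial_y^2}\), where the two one-dimensional factors commute and act on separate variables. We estimate the \(y\)-direction with the one-dimensional dispersive bound (decay \(|t|^{-1/2}\), \(L^1_y\to L^\infty_y\)) and the \(x\)-direction with the one-dimensional \(L^{4/3}_x\to L^4_x\) dispersive estimate (decay \(|t|^{-\frac12(1-\frac24)}=|t|^{-1/4}\)). The exponents add to give \(|t|^{-3/4}\).

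\emph{The main estimate \eqref{dispersive estimate for nls-y(C)}.} For \(f\in L^1_yL^{4/3}_x\) (first Schwartz functions, then by density), write the \(y\)-kernel explicitly:
\[
(e^{it\Delta}f)(x,y)=\frac{1}{(4\pi i t)^{1/2}}\int_\R e^{i\frac{(y-y')^2}{4t}}\,\bigl(e^{it\partial_x^2}f(\cdot,y')\bigr)(x)\,dy'.
\]
Minkowski's inequality in \(L^4_x\) gives
\[
\|e^{it\Delta}f(\cdot,y)\|_{L^4_x}\lesssim|t|^{-1/2}\int\|e^{it\partial_x^2}f(\cdot,y')\|_{L^4_x}\,dy'\lesssim|t|^{-3/4}\|f\|_{L^1_yL^{4/3}_x},
\]
uniformly in \(y\). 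The second inequality is the one-dimensional Riesz--Thorin interpolation between \eqref{dispersive} with \(n=1\) and unitarity on \(L^2\).

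\emph{Continuity and decay in \(y\).} We still need that \(y\mapsto e^{it\Delta}f(\cdot,y)\in L^4_x\) is continuous and vanishes as \(|y|\to\infty\). Mimicking Lemma~\ref{dispersive-M}, we write
\[
e^{i\frac{(y-y')^2}{4t}}=e^{i\frac{y^2}{4t}}e^{-i\frac{yy'}{2t}}e^{i\frac{y'^2}{4t}},
\]
so that, up to the unimodular factor \(e^{iy^2/4t}\), the function of \(y\) is the vector-valued Fourier transform, at frequency \(y/2t\), of \(g_t(y'):=e^{iy'^2/4t}e^{it\partial_x^2}f(\cdot,y')\in L^1_y(L^4_x)\). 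The Riemann--Lebesgue lemma holds for Bochner-integrable functions with values in a Banach space: approximate \(g_t\) by simple functions and use the scalar case. This gives \(C_{0,y}L^4_x\). This step is the only slightly delicate point, and we expect it to be the main (mild) obstacle. An alternative route is to prove the estimate for \(f\) in the dense class \(C_c^\infty\), where the output is clearly in \(C_{0,y}L^4_x\), and then use the uniform bound together with closedness of \(C_{0,y}L^4_x\) in \(L^\infty_yL^4_x\).

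\emph{The dual estimate \eqref{dispersive estimate for nls-y}.} This follows by duality. The dual of \(C_{0,y}L^4_x\) is \(\M_yL^{4/3}_x\), meaning measures of bounded variation with values in \(L^{4/3}_x\) (using that \(L^{4}_x\) is reflexive with the Radon--Nikodym property). The adjoint of \(e^{it\Delta}\) is \(e^{-it\Delta}\), to which the first estimate applies equally. Hence for \(f\in\M_yL^{4/3}_x\), \(e^{it\Delta}f\) defines a bounded functional on \(L^1_yL^{4/3}_x\) of norm \(\lesssim|t|^{-3/4}\|f\|_{\M_yL^{4/3}_x}\), i.e.\ an element of \((L^1_yL^{4/3}_x)^*=L^\infty_yL^4_x\) with that norm. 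Concretely, one pairs \(\langle e^{it\Delta}f,g\rangle=\langle f,e^{-it\Delta}g\rangle\) for \(g\in L^1_yL^{4/3}_x\) and bounds the right-hand side using the first estimate.
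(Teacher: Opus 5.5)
Your proof is correct, but it takes a different route from the paper. The paper first proves two endpoint bounds. One is $\|e^{it\Delta}f\|_{L^\infty_yL^2_x}\lesssim|t|^{-1/2}\|f\|_{L^1_yL^2_x}$, which uses unitarity of $e^{it\partial_x^2}$, the one-dimensional dispersive bound in $y$, and Minkowski to swap the norms. The other is the full two-dimensional bound \eqref{dispersive} with decay $|t|^{-1}$. It then interpolates these at $\theta=\frac12$ using the mixed-norm Riesz--Thorin theorem. The $C_{0,y}$ property is obtained by reducing to Schwartz data by density, which is your ``alternative route''.

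You instead factor $e^{it\Delta}=e^{it\partial_x^2}e^{it\partial_y^2}$, apply Minkowski's integral inequality to the $y$-kernel, and insert the scalar one-dimensional $L^{4/3}_x\to L^4_x$ bound. This needs only classical Riesz--Thorin and no mixed-norm interpolation theorem. Your vector-valued Riemann--Lebesgue argument also yields $C_{0,y}L^4_x$ directly for every $f\in L^1_yL^{4/3}_x$, in the spirit of Lemma~\ref{dispersive-M}. Your duality step makes explicit what the paper leaves as ``by duality''.

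One small point there. Identifying $(C_{0,y}L^4_x)^*$ with $L^{4/3}_x$-valued measures of bounded variation is Singer's theorem and needs no Radon--Nikodym property. The Radon--Nikodym property of $L^4_x$ is what you actually use for $(L^1_yL^{4/3}_x)^*=L^\infty_yL^4_x$.

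What the paper's route buys is portability. The same template is reused in Lemma~\ref{l:dispersive estimate in section 4} in polar coordinates: an estimate that is $L^2$ in the tangential variable, proved by hand, then interpolated against the standard dispersive estimate. There the propagator has no product structure in $(r,\theta)$, and there is no separate angular dispersive estimate to feed into a Minkowski argument. So your factorization would not carry over to the circle case.
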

\begin{proof}
To prove \eqref{dispersive estimate for nls-y(C)}, we assume by density that \(f\in \Schwartz\). From Lemma~\ref{dispersive-M}, for \(t\neq0\) we have \(e^{it\Delta}f\in C_0(\R^2)\). Consequently, it suffices to show that 
\eq{mixed norm dispersive}{
\|e^{it\Delta}f\|_{L^\infty_{y}L^4_x}\lesssim |t|^{-\frac 3 4}\|f\|_{L^1_yL^{4/3}_x}.
}

    Using the fact that \(e^{it\p_x^2}\) is an isometry in \(L^2_x\) and applying \eqref{dispersive}, we have
    \begin{align*}
\|e^{it\Delta}f\|_{L^\infty_yL^2_x}=\|e^{it\p_y^2}f\|_{L^\infty_yL^2_x}\lesssim\|e^{it\p_y^2}f\|_{L^2_xL^\infty_y}\lesssim|t|^{-\frac 1 2}\|f\|_{L^2_xL^1_y}\lesssim |t|^{-\frac 1 2}\|f\|_{L^1_yL^2_x}.
    \end{align*}
    Moreover, applying  \eqref{dispersive} for the 2-d case, we obtain
    \[
    \|e^{it\Delta}f\|_{L^{\infty}(\R^2)}\lesssim |t|^{-1}\|f\|_{L^1(\R^2)}.
    \]

    The estimate \eqref{mixed norm dispersive} now follws from applying the Riesz--Thorin interpolation theorem for mixed norms; see \cite[Theorem 2]{Blozinski1981MultivariateRA}.

\end{proof}

By applying Sobolev embedding \eqref{1-d GNinfty} and the Gagliardo--Nirenberg inequality \eqref{1-d GN4}, we have the anisotropic Sobolev embedding \(H^1(\R^2) \hookrightarrow C_{0,y}L_x^4\), which will be used several times in this section. To prove this, we observe that for any \(\epsilon\in (0,\frac 1 4]\) we have
\begin{equation}\label{eq:anisotrophic-y}
    \|f\|_{L^\infty_yL^4_x}\lesssim \|\langle \p_x\rangle^{1/4}f\|_{L^\infty_yL^2_x} \lesssim \| \langle \p_y \rangle^{\frac 1 2+\epsilon}\langle \p_x\rangle^{\frac 14}f\|_{L^2(\R^2)}\lesssim\|f\|_{H^{\frac 3 4+\epsilon}}\lesssim \|f\|_{H^1(\R^2)}.
\end{equation}
We also have the dual estimate
\begin{equation}\label{eq:anisotrophic-y-dual}
    \|f\|_{H^{-1}(\R^2)}\lesssim \|f\|_{L^1_yL^\frac{4}{3}_x}.
\end{equation}

We now use \eqref{dispersive estimate for nls-y} to prove a Lipschitz estimate for the solution map of \eqref{nls-y}, which yields uniqueness as an immediate corollary:
\begin{prop}\label{K-estimate}
    Suppose that for some \(T>0\) there are two solutions \(\psi,\phi\in C([-T,T];H_w^1(\R^2)\cap L_y^\infty L_x^4)\) of the Duhamel formulation \eqref{eq:Dulm} of \eqref{nls-y}, where \(H_w^1(\R^2)\) denotes \(H^1(\R^2)\) endowed with the weak topology. Then for all \(|t|\leq |T|\) we have the estimate
    \begin{equation}\label{eq:p3.1}
        \|\psi(t)-\phi(t)\|_{H^{-1}(\R^2)}\leq (1+K|t|e^{CK^8t^2})\|\psi(0)-\phi(0)\|_{H^1(\R^2)},
    \end{equation}
    where \(C>0\) is some absolute constant and
    \[
    K:=\sup_{|t|\leq T}\Bigl\{\|\psi(t)\|^2_{L^{\infty}_yL^4_x}+\|\phi(t)\|^2_{L^{\infty}_yL^4_x}\Bigr\}\lesssim \sup_{|t|\leq T}\Bigl\{\|\psi(t)\|^2_{H^1(\R^2)}+\|\phi(t)\|^2_{H^1(\R^2)}\Bigr\}.
    \]
\end{prop}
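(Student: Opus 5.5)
The plan is to derive a Gronwall-type inequality for the difference \(w:=\psi-\phi\) measured in \(L^\infty_yL^4_x\), and then to transfer the resulting bound to \(H^{-1}\) using the dual anisotropic embedding. Subtracting the Duhamel formulas \eqref{eq:Dulm} for \(\psi\) and \(\phi\) gives
\[
w(t)=e^{it\Delta}w(0)-2i\mu\int_0^t e^{i(t-s)\Delta}\bigl[\delta_{\{y=0\}}F(s)\bigr]\,ds,\qquad F:=|\psi|^2\psi-|\phi|^2\phi .
\]
Since \(\psi,\phi\in L^\infty_yL^4_x\), and in fact in \(C_{0,y}L^4_x\) by \eqref{eq:anisotrophic-y} because they lie in \(H^1\), the traces at \(y=0\) make sense. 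The measure \(\delta_{\{y=0\}}F(s)\) is \(\delta_0(y)\otimes F(s,\cdot,0)\), so \(\|\delta_{\{y=0\}}F(s)\|_{\M_yL^{4/3}_x}=\|F(s,\cdot,0)\|_{L^{4/3}_x}\). Using the pointwise bound \(|F|\lesssim(|\psi|^2+|\phi|^2)|w|\) and H\"older's inequality \(L^4\cdot L^4\cdot L^4\to L^{4/3}\) in \(x\), this is \(\lesssim K\,g(s)\), where \(g(s):=\|w(s)\|_{L^\infty_yL^4_x}\).

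\textbf{Step 1: bound for \(g\).} For the linear term I will use \eqref{eq:anisotrophic-y} and unitarity to get \(\|e^{it\Delta}w(0)\|_{L^\infty_yL^4_x}\lesssim\|w(0)\|_{H^1}\). For the Duhamel term I will apply the measure-valued dispersive estimate \eqref{dispersive estimate for nls-y}. Together these give
\[
g(t)\le C\|w(0)\|_{H^1}+CK\int_0^{|t|}|t-s|^{-3/4}g(s)\,ds .
\]

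\textbf{Step 2: removing the singular kernel.} I will apply H\"older's inequality with \(|t-s|^{-3/4}\in L^{8/7}_s\) and \(g\in L^8_s\). Since \(\int_0^t|t-s|^{-6/7}ds\sim t^{1/7}\), this yields \(g(t)\le C\|w(0)\|_{H^1}+CKt^{1/8}\bigl(\int_0^t g^8\bigr)^{1/8}\). Raising to the eighth power gives
\[
g(t)^8\le C\|w(0)\|_{H^1}^8+CK^8|t|\int_0^{|t|}g(s)^8\,ds .
\]
On \([0,t]\) the factor \(|t|\) can be frozen, so the ordinary Gronwall inequality gives \(g(s)^8\le C\|w(0)\|_{H^1}^8e^{CK^8t^2}\). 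Taking eighth roots and renaming constants, \(\sup_{|s|\le|t|}g(s)\lesssim\|w(0)\|_{H^1}e^{CK^8t^2}\).

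\textbf{Step 3: the \(H^{-1}\) estimate.} From the Duhamel formula, unitarity on \(H^{-1}\), and the dual estimate \eqref{eq:anisotrophic-y-dual}, I will obtain
\[
\|w(t)\|_{H^{-1}}\le\|w(0)\|_{H^{-1}}+C\int_0^{|t|}\|F(s,\cdot,0)\|_{L^{4/3}_x}\,ds\le\|w(0)\|_{H^1}+CK|t|\sup_{|s|\le|t|}g(s).
\]
The dual estimate is applied here in its measure form, \(\|\delta_0\otimes h\|_{H^{-1}}\lesssim\|h\|_{L^{4/3}_x}\), which follows by pairing against \(H^1\subset C_{0,y}L^4_x\). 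Substituting the bound from Step 2 yields \eqref{eq:p3.1} after absorbing constants into \(K\) and \(C\). The bound \(K\lesssim\sup_t(\|\psi\|_{H^1}^2+\|\phi\|_{H^1}^2)\) is just \eqref{eq:anisotrophic-y}.

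\textbf{Main obstacle.} I expect the delicate part to be the rigor of Step 1 under the weak-continuity hypothesis. One must check that the Duhamel integral makes sense as a Bochner or weak integral in \(H^{-1}\), that \(s\mapsto F(s,\cdot,0)\) is measurable with values in \(L^{4/3}_x\), and that \(g\) is measurable and locally bounded, so that Gronwall applies. I would handle this by testing against Schwartz functions and using that weak continuity in \(H^1\) implies continuity in \(H^{3/4+\epsilon}_{\mathrm{loc}}\). The analytic core, the singular Gronwall via H\"older with exponent \(8\), is what produces the \(e^{CK^8t^2}\) factor.
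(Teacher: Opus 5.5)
Your proposal is correct and follows essentially the same route as the paper. Both arguments bound the linear term by the anisotropic embedding $H^1\hookrightarrow L^\infty_yL^4_x$ and the Duhamel term by the measure-valued dispersive estimate, apply H\"older in time with exponent $8$ followed by Gronwall to get the $e^{CK^8t^2}$ bound on $\|\psi-\phi\|_{L^\infty_yL^4_x}$, and then pass to $H^{-1}$ by duality against $H^1\hookrightarrow C_{0,y}L^4_x$; your explicit freezing of the factor $|t|$ before invoking Gronwall simply makes precise a step the paper leaves implicit.
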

\begin{proof}
    By the Duhamel formula, for \(0<t\leq T\) we have
    \[
    \psi(t)-\phi(t)=e^{it\Delta}(\psi(0)-\phi(0))-i2\mu \int_0^t e^{i(t-s)\Delta}\nonlineary\left[|\psi(s)|^2\psi(s)-|\phi(s)|^2\phi(s)\right]\,ds.
    \]
    For the first term, we use the anisotropic Sobolev embedding \eqref{eq:anisotrophic-y} to bound
    \[
    \|e^{it\Delta}(\psi(0)-\psi(0))\|_{L^\infty_yL^4_x}\lesssim \|e^{it\Delta}(\psi(0)-\phi(0))\|_{H^1(\R^2)}\lesssim\|\psi(0)-\phi(0)\|_{H^1(\R^2)}.
    \]
    For the second term, we apply \eqref{dispersive estimate for nls-y} to estimate
    \begin{align*}
       &\left\|\int_0^t e^{i(t-s)\Delta}\nonlineary\left[|\psi(s)|^2\psi(s)-|\phi(s)|^2\phi(s)\right]ds\right\|_{L^\infty_yL^4_{x}} \\
       &\ \ \ \ \ \ \lesssim \int_0^t |t-s|^{-\frac 3 4}\|\nonlineary\left[|\psi(s)|^2\psi(s)-|\phi(s)|^2\phi(s)\right]\|_{\M_yL_x^{4/3}}ds\\
       &\ \ \ \ \ \ \lesssim \int_0^t |t-s|^{-\frac 3 4}(\|\psi(s)\|^2_{L_y^\infty L_x^4}+\|\phi(s)\|^2_{L_y^\infty L_x^4})\|\psi(s)-\phi(s)\|_{L_y^\infty L_x^4}ds\\
       &\ \ \ \ \ \ \lesssim K\int_0^t |t-s|^{-\frac 3 4}\|\psi(s)-\phi(s)\|_{L_y^\infty L_x^4}ds \\
       &\ \ \ \ \ \ \lesssim K\left(\int_0^t|t-s|^{-\frac 6 7}ds\right)^{\frac 7 8}\left(\int_0^t\|\psi(s)-\phi(s)\|_{L_y^\infty L_x^4}^8ds\right)^{\frac 1 8}\\
       &\ \ \ \ \ \ \lesssim \left(K^{8}t\int_0^t\|\psi(s)-\phi(s)\|_{L_y^\infty L_x^4}^8ds\right)^{\frac 1 8}.
    \end{align*}
    Therefore, by taking the 8\textsuperscript{th} power, we have
    \[
   \|\psi(t)-\phi(t)\|_{L_y^\infty L_x^4}^8\lesssim \|\psi(0)-\phi(0)\|_{H^1(\R^2)}^8+K^{8}t\int_0^t\|\psi(s)-\phi(s)\|_{L_y^\infty L_x^4}^8\,ds.
    \]
    The integral form of Gronwall's inequality then give us
    \[
   \|\psi(t)-\phi(t)\|_{L_y^\infty L_x^4}\lesssim e^{CK^{8}t^2}\|\psi(0)-\phi(0)\|_{H^1(\R^2)}.
    \]
    Moreover, for any test function \(f\in H^1(\R^2)\),
    \begin{align*}
        \bigl|\langle\nonlineary\left(|\psi|^2\psi-|\phi|^2\phi\right),f\rangle\bigr|&=\left|\int_\R\overline{\left(|\psi(x,0)|^2\psi(x,0)-|\phi(x,0)|^2\phi(x,0)\right)}f(x,0)dx\right|\\
        &\lesssim (\|\psi(x,0)\|_{L^4_x}^2+\|\phi(x,0)\|_{L^4_x}^2)\|\psi(x,0)-\phi(x,0)\|_{L_x^4}\|f(x,0)\|_{L^4_x}\\
        &\lesssim K\|\psi-\phi\|_{L^\infty_yL^4_x}\|f\|_{L^\infty_yL^4_x}\\
        &\lesssim K\|\psi-\phi\|_{L^\infty_yL^4_x}\|f\|_{H^1(\R^2)}.
    \end{align*}
    By duality, we conclude that
    \[
    \bigl\|\nonlineary\left(|\psi|^2\psi-|\phi|^2\phi\right)\bigr\|_{H^{-1}(\R^2)}\lesssim K \|\psi-\phi\|_{L^\infty_yL^4_x}.
    \]
    Finally, we apply this estimate to obtain
    \begin{align*}
        \|\psi(t)-\phi(t)\|_{H^{-1}(\R^2)}&\lesssim \|\psi(0)-\phi(0)\|_{H^{-1}(\R^2)}+\int_0^t \bigl\|\nonlineary\left[|\psi(s)|^2\psi(s)-|\phi(s)|^2\phi(s)\right]\bigr\|_{H^{-1}(\R^2)}ds\\
        &\lesssim\|\psi(0)-\phi(0)\|_{H^{1}}+K\int_0^t \|\psi(s)-\phi(s)\|_{L_y^\infty L_x^4}ds\\
        &\lesssim (1+Kte^{CK^8t^2})\|\psi(0)-\phi(0)\|_{H^1(\R^2)},
    \end{align*}
    which yields \eqref{eq:p3.1} for \(0<t\leq T\). The corresponding estimate for times \(-T\leq t<0\) follows from a parallel argument.
\end{proof}

Uniqueness of solutions to \eqref{nls-y} also yields the following:

\begin{lem}\label{Conservation-limiting}
Let \(\psi\in C([0,T];H_w^1(\R^2)\cap L_y^\infty L_x^4)\) be a solution of \eqref{nls-y} where $\mu=\pm1$ satisfying the inequalities
\[
M(\psi(t))\leq M(\psi(0)),
\qquad
E(\psi(t))\leq E(\psi(0)),
\]
for all \(0\leq t\leq T\).

Suppose that for each $t_0\in[0,T]$ there exists a solution \(\phi\in C([0,t_0];H_w^1(\R^2)\cap L_y^\infty L_x^4)\) of \eqref{nls-y} satisfying \(\phi(0) = \overline{\psi(t_0)}\) as well as the inequalities
\[
M(\phi(t))\leq M(\phi(0)),
\qquad
E(\phi(t))\leq E(\phi(0)),
\]
for all \(0\leq t\leq t_0\).

Then \(\psi\) conserves both mass and energy on \([0,T]\), i.e,
\[
M(\psi(t))=M(\psi(0)),
\qquad
E(\psi(t))=E(\psi(0)),
\]
for all \(0\leq t\leq T\).
\end{lem}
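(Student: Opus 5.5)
The plan is the standard time-reversal argument: run the solution backwards via complex conjugation, use uniqueness (Proposition~\ref{K-estimate}) to identify the backward solution, and compare the two one-sided inequalities.

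Fix \(t_0\in[0,T]\). The key observation is that \eqref{nls-y} is invariant under the time-reversal \(\psi(t)\mapsto \overline{\psi(t_0-t)}\). Define
\[
\tilde\psi(t):=\overline{\psi(t_0-t)}, \qquad 0\leq t\leq t_0 .
\]
I will check that \(\tilde\psi\) is again a solution of the Duhamel formulation \eqref{eq:Dulm} on \([0,t_0]\), lying in \(C([0,t_0];H^1_w\cap L^\infty_yL^4_x)\), with \(\tilde\psi(0)=\overline{\psi(t_0)}\). This follows by conjugating the Duhamel identity for \(\psi\) written between times \(t_0-t\) and \(t_0\). Since \(\overline{e^{is\Delta}f}=e^{-is\Delta}\bar f\) and \(\delta_{\{y=0\}}\) is real, we have \(\overline{|\psi|^2\psi}=|\bar\psi|^2\bar\psi\), and the change of variables \(s\mapsto t_0-s\) then gives the formula for \(\tilde\psi\). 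Some care is needed to pass from the formula based at \(0\) to the one based at \(t_0\): applying \(e^{-it_0\Delta}\) and subtracting Duhamel formulas gives the group property, where the integrals are interpreted in \(H^{-1}\) via the estimate on \(\delta_{\{y=0\}}(|\psi|^2\psi)\) in \(H^{-1}\) from the proof of Proposition~\ref{K-estimate}.

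Next, by hypothesis there is a solution \(\phi\) on \([0,t_0]\) with \(\phi(0)=\overline{\psi(t_0)}=\tilde\psi(0)\). Both \(\phi\) and \(\tilde\psi\) lie in \(C([0,t_0];H^1_w\cap L^\infty_yL^4_x)\). Weak continuity in \(H^1\) gives bounded \(H^1\) norms on the compact interval, so \(K<\infty\) in Proposition~\ref{K-estimate}. Applying \eqref{eq:p3.1} with equal data then gives \(\phi=\tilde\psi\) on \([0,t_0]\). In particular, \(\phi(t_0)=\overline{\psi(0)}\).

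Now compare the inequalities. Mass and energy are invariant under complex conjugation: \(|\nabla\bar u|=|\nabla u|\) and \(|\bar u|^4=|u|^4\) on \(\Sigma\). Hence
\[
E(\psi(0))=E(\phi(t_0))\leq E(\phi(0))=E(\psi(t_0))\leq E(\psi(0)),
\]
so \(E(\psi(t_0))=E(\psi(0))\), and identically for \(M\). Since \(t_0\) was arbitrary, this proves the claim.

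I expect the main obstacle to be the first step: rigorously verifying that the time-reversed conjugate satisfies \eqref{eq:Dulm}, which requires the semigroup manipulation of the Duhamel integral in a weak space (\(H^{-1}\)). Everything else is formal bookkeeping.
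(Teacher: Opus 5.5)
Your proposal is correct and follows essentially the same route as the paper: define \(\widetilde\psi(t)=\overline{\psi(t_0-t)}\), identify it with \(\phi\) via the uniqueness estimate of Proposition~\ref{K-estimate} on \([0,t_0]\), and close the chain \(E(\psi(0))=E(\widetilde\psi(t_0))\leq E(\widetilde\psi(0))=E(\psi(t_0))\leq E(\psi(0))\) using invariance of \(M\) and \(E\) under complex conjugation. The only difference is that you spell out the Duhamel/group-property check that the time-reversed function is a solution, which the paper simply attributes to time-reversal symmetry.
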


\begin{proof}

By assumption, we have
\[
E(\psi(t))\leq E(\psi(0))\qquad \text{for }0\leq t\leq T.
\]
Fix $t_0\in[0,T]$ and consider
\[\widetilde\psi(s,x)=\overline{\psi(t_0-s,x)}.\]
By time-reversal symmetry, $\widetilde\psi$ is also a
solution of \eqref{nls-y} on $[0,t_0]$ and has initial data \(\widetilde\psi(0) = \overline{\psi(t_0)} = \phi(0)\).

Observing that Proposition~\ref{K-estimate} holds with \([-T,T]\) replaced by \([0,t_0]\), we conclude that \(\widetilde\psi = \phi\) and hence
\[
E(\widetilde\psi(t_0))\leq E(\widetilde\psi(0)).\]
However,
\[\widetilde\psi(0)=\overline{\psi(t_0)}\qtq{and}\widetilde\psi(t_0)=\overline{\psi(0)}.
\]
Since the energy is invariant under complex conjugation, i.e.,
\(E(\overline \psi)=E(\psi)\),
we have
\[
E(\psi(0))=E(\overline{\psi(0)})=E(\widetilde\psi(t_0))\leq E(\widetilde\psi(0))=E(\overline{\psi(t_0)})=E(\psi(t_0))\leq E(\psi(0)),\]
and hence
\[E(\psi(t_0))=E(\psi(0)).\]
Since $t_0\in[0,T]$ was arbitrary, energy is conserved on $[0,T]$.

The proof for the mass follows is similar.
\end{proof}

Next, we turn to the problem of existence. 

We recall that we will construct solutions to \eqref{nls-y} from the \(\epsilon \to0\) limit of solutions of the regularized problem \eqref{nls-epsilon}. 
existence of local-in-time solutions to \eqref{nls-y}.

To do this we require one additional preliminary lemma:
\begin{lem}\label{l:rho-diff}
For \(s>\frac 1 2\) and \(0<\epsilon<\epsilon'<1\) we have the estimate,
    \eq{rho-diff}{
        \|\rho^\epsilon -\rho^{\epsilon'} \|_{H^{-s}_y}\lesssim |\epsilon-\epsilon'|^{s-\frac 1 2}
        }
\end{lem}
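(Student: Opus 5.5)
Since \(\Sigma=\{y=0\}\) and \(\rho\) is supported in the unit ball, the regularized weight \(\rho^\epsilon=\rho^\epsilon_\Sigma\) depends only on \(y\):
\[
\rho^\epsilon(y)=\epsilon^{-1}\sigma(y/\epsilon),\qquad \sigma(y):=\int_{\R}\rho(z,y)\,dz .
\]
Here \(\sigma\in C_c^\infty(-1,1)\) is non-negative and even, with \(\int\sigma=1\). So \(\rho^\epsilon\) is an \(L^1\)-normalized approximate identity in the single variable \(y\), and the claim is a one-dimensional estimate. I will prove it on the Fourier side, since
\[
\widehat{\rho^\epsilon}(\eta)=(2\pi)^{-1/2}\hat\sigma_0(\epsilon\eta),\qquad \hat\sigma_0(\zeta):=\int\sigma(y)e^{-iy\zeta}\,dy ,
\]
where \(\hat\sigma_0\) is Schwartz with \(\hat\sigma_0(0)=1\). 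Writing \(\delta:=\epsilon'-\epsilon>0\), it suffices to bound
\[
\|\rho^\epsilon-\rho^{\epsilon'}\|_{H^{-s}_y}^2\simeq\int_{\R}\langle\eta\rangle^{-2s}\,|\hat\sigma_0(\epsilon\eta)-\hat\sigma_0(\epsilon'\eta)|^2\,d\eta
\]
by \(C\delta^{2s-1}\).

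The key pointwise bound is
\[
|\hat\sigma_0(\epsilon\eta)-\hat\sigma_0(\epsilon'\eta)|\le \min\bigl\{2,\;\delta|\eta|\sup_{\zeta}|\hat\sigma_0'(\zeta)|\bigr\}\lesssim\min\{1,\delta|\eta|\},
\]
which follows from the mean value theorem together with \(\|\hat\sigma_0\|_{L^\infty}\le1\). This bound is uniform in \(\epsilon\): only the difference \(\delta\) enters. I then split the frequency integral at \(|\eta|=\delta^{-1}\), where \(\delta^{-1}>1\).

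For \(|\eta|\le\delta^{-1}\), I use the bound \(\delta^2\eta^2\) and get
\[
\delta^2\int_{|\eta|\le\delta^{-1}}\langle\eta\rangle^{2-2s}\,d\eta\lesssim \delta^2\bigl(1+\delta^{2s-3}+|\log\delta|\bigr)
\]
according to whether \(s<\tfrac32\), \(s>\tfrac32\), or \(s=\tfrac32\). In the case \(s<\tfrac32\) this gives \(\delta^{2s-1}\) exactly. For \(|\eta|>\delta^{-1}\), I use the bound \(1\) and get
\[
\int_{|\eta|>\delta^{-1}}|\eta|^{-2s}\,d\eta\lesssim\delta^{2s-1},
\]
which is finite precisely because \(s>\tfrac12\). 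Summing the two pieces gives \(\|\rho^\epsilon-\rho^{\epsilon'}\|_{H^{-s}}\lesssim\delta^{s-\frac12}\) when \(\tfrac12<s<\tfrac32\).

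The only subtle point is large \(s\), specifically \(s\ge\tfrac32\). There the low-frequency piece is of order \(\delta^2\) (up to a logarithm when \(s=\tfrac32\)), which is not \(\lesssim\delta^{2s-1}\), since \(\delta^{2s-1}\le\delta^2\) when \(2s-1\ge2\) and \(\delta<1\). In that case the bound \(\delta^{s-1/2}\) is stronger than what this argument yields. I would therefore state the lemma for \(\tfrac12<s<\tfrac32\), which is the range used later (\(s\) slightly above \(\tfrac12\)), or else weaken the conclusion to \(\delta^{\min(s-\frac12,1)}\) with a log loss at \(s=\tfrac32\). Either way the argument is just the two-region Fourier estimate above, and the main care is keeping the constants independent of \(\epsilon,\epsilon'\), which the uniform mean value bound ensures.
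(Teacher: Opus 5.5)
Your proof is correct for $\tfrac12<s<\tfrac32$, which contains the only two values the paper uses ($s=\tfrac34$ and $s=1$). It takes a different route from the paper.

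The paper argues by duality. It writes $\hat\varrho(\epsilon\xi)-\hat\varrho(\epsilon'\xi)=\int_{\epsilon'}^{\epsilon}\xi\,\hat\varrho'(\xi h)\,dh$ and, for each fixed $h$, applies Cauchy--Schwarz in $\xi$ against $\|f\|_{\dot H^s}$. It then uses the scaling identity $\|\xi^{1-s}\hat\varrho'(\xi h)\|_{L^2_\xi}=c\,h^{s-\frac32}$, which reduces everything to $\int_\epsilon^{\epsilon'}h^{s-\frac32}\,dh=(s-\tfrac12)^{-1}\bigl((\epsilon')^{s-\frac12}-\epsilon^{s-\frac12}\bigr)$.

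You instead estimate the $H^{-s}$ norm directly, using the uniform pointwise bound $\min\{1,\delta|\eta|\}$ and a split at $|\eta|=\delta^{-1}$. Your version is more elementary and shows exactly where each constraint enters: $s>\tfrac12$ in the high-frequency tail, $s<\tfrac32$ in the low-frequency piece. The paper's version keeps the decay of $\hat\varrho'(h\eta)$ for $|h\eta|\gg1$. Since $\varrho$ is even, $\hat\varrho'(0)=0$ and $\int|\zeta|^{-1}|\hat\varrho'(\zeta)|^2\,d\zeta<\infty$, so the paper's version also reaches the endpoint $s=\tfrac32$, where your crude bound costs a logarithm. One small slip: in your low-frequency display the case labels are swapped. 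The $\delta^{2s-3}$ term belongs to $s<\tfrac32$ and the constant to $s>\tfrac32$, though you use the correct one afterwards.

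Your hesitation about large $s$ is justified, and you can state it more strongly: for $s>\tfrac32$ the lemma is false, not just beyond your method. Fix $\epsilon=\tfrac12$, put $\epsilon'=\tfrac12+\delta$, and let $\delta\downarrow0$.
\begin{itemize}
\item The quotients $\delta^{-1}\bigl(\hat\sigma_0(\epsilon\eta)-\hat\sigma_0(\epsilon'\eta)\bigr)$ converge to $-\eta\,\hat\sigma_0'(\eta/2)$ and are dominated by $C|\eta|$.
\item The weight $\langle\eta\rangle^{-2s}\eta^2$ is integrable precisely when $s>\tfrac32$.
\item By dominated convergence, $\delta^{-2}\|\rho^\epsilon-\rho^{\epsilon'}\|_{H^{-s}_y}^2$ tends to a positive multiple of $\int\langle\eta\rangle^{-2s}\eta^2|\hat\sigma_0'(\eta/2)|^2\,d\eta$. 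This integral is positive because $\hat\sigma_0$ is nonconstant.
\end{itemize}
So the norm is comparable to $\delta$, which is not $O(\delta^{s-\frac12})$.

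Correspondingly, the paper's last step $(\epsilon')^{s-\frac12}-\epsilon^{s-\frac12}\lesssim(\epsilon'-\epsilon)^{s-\frac12}$ is the subadditivity of $h\mapsto h^{p}$. That holds only for $p=s-\tfrac12\le1$, so the paper's proof also tacitly needs $s\le\tfrac32$. The true range is therefore $\tfrac12<s\le\tfrac32$. Your restriction misses only the endpoint and is harmless for every application in the paper.
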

\begin{proof}
Write \(\rho^\epsilon(x,y) = \varrho^\epsilon(y) = \frac1\epsilon \varrho(\frac y\epsilon)\).
    For any test function \(f\in C_c^\infty(\R)\), we may then apply Plancherel's Theorem to bound 
        \begin{align*}
            \bigl|\langle f,\rho^\epsilon -\rho^{\epsilon'} \rangle\bigr|&=\left|\int_\R (\varrho^\epsilon(y) -\varrho^{\epsilon'}(y) )\overline{f(y)}\,dy\right|\\
            &=\left|\int_\R(\hat\varrho(\epsilon\xi)-\hat\varrho(\epsilon'\xi))\overline{\hat{f}(\xi)}\,d\xi\right|\\
            &=\left|\int_\R\left(\int_{\epsilon'}^\epsilon\xi\hat\varrho'(\xi h)\,dh\right)\overline{\hat{f}(\xi)}\,d\xi\right| \\
            &\lesssim \int_{\epsilon}^{\epsilon'}\|\xi^{1-s}\hat\varrho'(\xi h)\|_{L^2_\xi}\|f(y)\|_{\dot H^{s}_y}\,dh\\
            &\lesssim \left(\int_{\epsilon}^{\epsilon'} h^{s-\frac 3 2}\,dh\right) \|f(y)\|_{\dot H^{s}_y}\\
            &\lesssim|\epsilon-\epsilon'|^{s-\frac 1 2}\|f(y)\|_{ H^{s}_y},
        \end{align*}
        where the implicit constant depends on \(\rho\). The estimate \eqref{rho-diff} now follows by duality.
\end{proof}

\begin{prop}\label{p:10-15}
    Given \(\psi_0\in H^1(\R^2)\), there exists \(T=T(\|\psi_0\|_{H^1(\R^2)})>0\) and a solution \(\psi\in C([-T,T];H^1(\R^2))\) of the Duhamel formulation \eqref{eq:Dulm} of \eqref{nls-y} in the defocusing case, or in the focusing case when \(\|\psi_0\|_{H^1(\R^2)}\) is sufficiently small. Moreover, if \(\psi^\epsilon\in C(\R;H^1(\R^2))\) is a family of solutions to \eqref{nls-epsilon} with \(\psi^\epsilon(0)=\psi_0\in H^1(\R^2)\), then
    \[
    \lim_{\epsilon\to 0}\sup_{|t|\leq T}\|\psi^{\epsilon}(t)-\psi(t)\|_{H^1(\R^2)}=0
    \]
\end{prop}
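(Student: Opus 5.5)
We construct \(\psi\) as the limit of the regularized solutions \(\psi^\epsilon\) of \eqref{nls-epsilon} with \(\psi^\epsilon(0)=\psi_0\). We first verify hypothesis \eqref{9-15-20-48} of Lemma~\ref{lem:uniform-regularized-control} for \(\Sigma=\{y=0\}\). Writing \(\rho^\epsilon(x,y)=\varrho^\epsilon(y)\), we have
\[
\int\rho^\epsilon|\psi|^4\,dx\,dy\leq\|\psi\|_{L^\infty_yL^4_x}^4\lesssim\|\psi\|_{H^1}^4
\]
by \eqref{eq:anisotrophic-y}. The lemma then gives global solutions with \(\sup_{t}\|\psi^\epsilon(t)\|_{H^1}\le A\), uniformly in \(\epsilon\). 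Here \(A\) depends only on \(\|\psi_0\|_{H^1}\), and in the focusing case we use the smallness of the data.

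\textbf{Step 1: Cauchy in a weak norm.} We show \(\{\psi^\epsilon\}\) is Cauchy in \(C([-T,T];L^\infty_yL^4_x)\), by the same Gronwall argument as in Proposition~\ref{K-estimate}. For \(0<\epsilon<\epsilon'\), subtract the Duhamel formulas and split
\[
\rho^\epsilon|\psi^\epsilon|^2\psi^\epsilon-\rho^{\epsilon'}|\psi^{\epsilon'}|^2\psi^{\epsilon'}
=\rho^\epsilon\bigl(|\psi^\epsilon|^2\psi^\epsilon-|\psi^{\epsilon'}|^2\psi^{\epsilon'}\bigr)+(\rho^\epsilon-\rho^{\epsilon'})|\psi^{\epsilon'}|^2\psi^{\epsilon'}.
\]
The first term is handled exactly as in Proposition~\ref{K-estimate}: \(\|\varrho^\epsilon\|_{\M_y}=1\), and \eqref{dispersive estimate for nls-y} gives a bound by \(A^2\int_0^t|t-s|^{-3/4}\|\psi^\epsilon-\psi^{\epsilon'}\|_{L^\infty_yL^4_x}\,ds\).

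The second term is the main obstacle, since \(\rho^\epsilon-\rho^{\epsilon'}\) is small only in a negative Sobolev norm (Lemma~\ref{l:rho-diff}). We plan to measure the Duhamel integral of this term in \(H^{-1}(\R^2)\), or alternatively split into \(P^y_{\le N}\) and \(P^y_{>N}\) using Lemma~\ref{Bernstein_y}. Using \eqref{eq:dualalgebraic1d} in \(y\), we bound
\[
\|(\varrho^\epsilon-\varrho^{\epsilon'})F\|_{H^{-s}_y}\lesssim|\epsilon-\epsilon'|^{s-1/2}\|F\|_{H^s_y},
\qquad F=|\psi^{\epsilon'}|^2\psi^{\epsilon'}.
\]
Controlling \(\|F\|_{H^s_yL^{4/3}_x}\) requires \(y\)-regularity of \(\psi^{\epsilon'}\) above \(1/2\), which \(H^1\) only barely provides. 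To obtain it, we use the \(H^1\) bound with interpolation, e.g. in \(L^{4/3}_xH^s_y\) with \(s\) slightly above \(1/2\), at the cost of a small power of \(x\)-regularity.

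With this, we get \(\|\psi^\epsilon-\psi^{\epsilon'}\|_{C_TH^{-\sigma}}\to0\) for some \(\sigma\). Interpolating against the uniform \(H^1\) bound upgrades this to convergence in \(C_TH^{1-}\), and hence in \(L^\infty_yL^4_x\) by \eqref{eq:anisotrophic-y}, since only \(H^{3/4+}\) is needed there. Alternatively, after the first-term Gronwall absorbs the difference, the Cauchy property in \(C_TL^\infty_yL^4_x\) follows directly. The limit \(\psi\) lies in \(C([-T,T];H^1_w)\cap L^\infty_yL^4_x\), has \(\|\psi\|_{H^1}\le A\), and passing to the limit in Duhamel's formula shows \(\psi\) solves \eqref{eq:Dulm}.

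\textbf{Step 2: Conservation and strong convergence.} By weak lower semicontinuity and Lemma~\ref{lem:Aepsilon}, \(M(\psi(t))\le M(\psi_0)\) and \(E(\psi(t))\le E(\psi_0)\); more precisely, \(V_\epsilon(\psi^\epsilon(t))\to\int|\psi(t)|^4\,dx\), using also the strong convergence in \(L^\infty_yL^4_x\). The same construction from the data \(\overline{\psi(t_0)}\) supplies the \(\phi\) required in Lemma~\ref{Conservation-limiting}. We choose \(T\) depending only on a bound on the \(H^1\) norm, which is at most \(A\), and if necessary treat \(\mu=-1\) using the smallness, noting \(\|\overline{\psi(t_0)}\|_{H^1}\le\sqrt2\|\psi_0\|_{H^1}\). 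Lemma~\ref{Conservation-limiting} then gives exact conservation of mass and energy. Combining this with Lemma~\ref{lem:Aepsilon}, we get \(\|\psi^\epsilon(t)\|_{H^1}\to\|\psi(t)\|_{H^1}\) uniformly in \(t\). Weak convergence plus convergence of norms (Radon--Riesz), made uniform in \(t\) by the equicontinuity coming from Step 1, gives \(\sup_{|t|\le T}\|\psi^\epsilon-\psi\|_{H^1}\to0\) and \(\psi\in C([-T,T];H^1)\).
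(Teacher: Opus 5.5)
Your overall architecture matches the paper's: uniform $H^1$ bound, Cauchy in $C_TL^\infty_yL^4_x$, weak-$H^1$ limit, conservation via Lemma~\ref{Conservation-limiting}, then Radon--Riesz. However, Step~1 has a genuine gap at exactly the term you call the main obstacle. Write $F_1,F_2$ for the two terms of your splitting and $X:=\sup_{|t|\le T}\|\psi^\epsilon-\psi^{\epsilon'}\|_{L^\infty_yL^4_x}$.

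Your main route is to show $\|\psi^\epsilon-\psi^{\epsilon'}\|_{C_TH^{-\sigma}}\to0$ and then interpolate against the $H^1$ bound. This is circular.
\begin{itemize}
\item The $H^{-\sigma}$ norm of the full difference contains the Duhamel integral of $F_1$. Its only available bound is $\lesssim TA^2X$, which involves the very quantity you want to show is small.
\item Interpolation returns $X$ only to a power $\theta<1$. You get $X\lesssim (TA^2X+T|\epsilon-\epsilon'|^{1/4}A^3)^\theta A^{1-\theta}$, which yields only $X\lesssim (TA^2)^{\theta/(1-\theta)}A+o(1)$. That is a bound, not $X\to0$.
\item You cannot instead interpolate the $F_2$ Duhamel integral on its own. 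The uniform $H^1$ bound holds only for the whole difference, not for that piece separately.
\end{itemize}
Your fallback (``after the first-term Gronwall absorbs the difference, the Cauchy property follows directly'') skips the crux. The Gronwall closes only if $\sup_t\|\int_0^te^{i(t-s)\Delta}F_2\,ds\|_{L^\infty_yL^4_x}\to0$. This does not follow from \eqref{dispersive estimate for nls-y}, because $\|F_2\|_{L^1_yL^{4/3}_x}$ is not small.

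The missing idea is to perform the Littlewood--Paley split on the full difference $\psi^\epsilon-\psi^{\epsilon'}$, and to take the limits in the right order.
\begin{itemize}
\item The high-frequency part is $\lesssim N^{-1/4}A$, by \eqref{bernie up} and the uniform $H^1$ bound.
\item $P^y_{\le N}$ is bounded on $L^\infty_yL^4_x$ uniformly in $N$. So the low-frequency $F_1$ piece stays linear in $X$ and is absorbed, either by $T^{1/4}A^2\ll1$ or by Gronwall.
\item Only the low-frequency $F_2$ piece uses the negative-Sobolev smallness, at the cost of a power of $N$. The paper measures it in $L^4_xH^{-3/4}_y$, using the $1$d dispersive estimate in $x$, \eqref{eq:dualalgebraic1d} and Lemma~\ref{l:rho-diff}. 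It then applies \eqref{bernie down} to obtain $N^{5/4}T^{3/4}A^3|\epsilon-\epsilon'|^{1/4}$.
\end{itemize}
Note that $y$-Bernstein alone cannot recover $L^4_x$ integrability from your $H^{-1}(\R^2)$ bound. An isotropic split would, however, also work with that bound: handle the high part via $H^{3/4+}\hookrightarrow L^\infty_yL^4_x$, and the low part via $\|P_{\le N}g\|_{H^1}\lesssim N^2\|g\|_{H^{-1}}$ together with \eqref{eq:anisotrophic-y}. Now send $\epsilon,\epsilon'\to0$ with $N$ fixed to get $\limsup X\lesssim N^{-1/4}A$, and only afterwards let $N\to\infty$. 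With Step~1 repaired this way, the rest of your outline goes through. This includes your check, which the paper leaves implicit, that the auxiliary solution from $\overline{\psi(t_0)}$ exists on $[0,t_0]$.
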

\begin{proof}
    We will construct \(\psi(t)\) as a limit of solutions \(\psi^\epsilon(t)\) to \eqref{nls-epsilon}.

    Given \(T>0\), we define the control quantity
    \[
    A:=\sup_{0<\epsilon\leq1}\sup_{|t|\leq T}\|\psi^\epsilon(t)\|_{H^1(\R^2)}.
    \]
Recalling the definition of \(V_\epsilon\) in
Lemma~\ref{lem:uniform-regularized-control}, we have
\[
V_\epsilon(\psi^\epsilon)
=
\int_\Sigma\int_{\R^2}
\rho(z)|\psi^\epsilon(y+\epsilon z)|^4\,dz\,dS(y),
\]
where \(\Sigma = \{y=0\}\). By the trace theorem and Sobolev embedding, together with
translation invariance of the \(H^1(\R^2)\)-norm,
\eq{V4estimate}{
\|\tau_{\epsilon z}\psi^\epsilon\|_{L^4(\Sigma)}
\lesssim
\|\tau_{\epsilon z}\psi^\epsilon\|_{H^1(\R^2)}
=
\|\psi^\epsilon\|_{H^1(\R^2)}.
}
Since \(\rho\geq0\) and \(\int_{\R^2}\rho(z)\,dz=1\), it follows that
\[
0\leq V_\epsilon^\epsilon(\psi)
\lesssim
\|\psi^\epsilon\|_{H^1(\R^2)}^4,
\]
uniformly for all \(\epsilon>0\).
    Then, by Lemma~\ref{lem:uniform-regularized-control}, we know that \eqref{nls-epsilon} with initial data \(\psi^\epsilon(0) = \psi_0\) is globally well-posed in \(H^1(\R^2)\) and have the uniform estimate \(A < \infty\).
    
    Taking \(0<\epsilon<\epsilon'\leq 1\), we use the Duhamel formulation \eqref{eq:Dulm} to write
    \[
    \psi^{\epsilon}(t)-\psi^{\epsilon'}(t)=- i2\mu\int_0^t e^{i(t-s)\Delta}\left(F_1(s)+F_2(s)\right)ds,
    \]
    where we have used that \(\psi^\epsilon(0) = \psi_0 = \psi^{\epsilon'}(0)\) and denote
    \[
F_1:=\rho^{\epsilon}\left(|\psi^{\epsilon}|^2\psi^{\epsilon}-|\psi^{\epsilon'}|^2\psi^{\epsilon'}\right)
\qtq{and} F_2:=\left(\rho^{\epsilon}-\rho^{\epsilon'}\right)|\psi^{\epsilon'}|^2\psi^{\epsilon'}.
\]
Our first objective is to show that 
\begin{equation}\label{eq:L4Cy}
    \lim_{\epsilon,\epsilon'\to0}\sup_{|t|\leq T}\|\psi^{\epsilon}-\psi^{\epsilon'}\|_{L^\infty_yL^4_x}=0.
\end{equation}

    We decompose
\begin{align*}
            \sup_{|t|\leq T}\|\psi^{\epsilon}(t)-\psi^{\epsilon'}(t)\|_{L^\infty_yL^4_x}&\leq\sup_{|t|\leq T}\|P_{>N}^y(\psi^{\epsilon}(t)-\psi^{\epsilon'}(t))\|_{L^\infty_yL^4_x}+\sup_{|t|\leq T}\|P_{\leq N}^y(\psi^{\epsilon}(t)-\psi^{\epsilon'}(t))\|_{L^\infty_yL^4_x}\\
            &\lesssim\underbrace{\sup_{|t|\leq T}\|P_{>N}^y(\psi^{\epsilon}(t)-\psi^{\epsilon'}(t))\|_{L^\infty_yL^4_x}}_{=:I} +\underbrace{\sup_{|t|\leq T}\|P_{\leq N}^y\int_0^t e^{i(t-s)\Delta}F_1(s)ds\|_{L^\infty_yL^4_x}}_{=:II}\\
            & \ \ \ +\underbrace{\sup_{|t|\leq T}\|P_{\leq N}^y\int_0^t e^{i(t-s)\Delta}F_2(s)ds\|_{L^\infty_yL^4_x}}_{=:III}.
        \end{align*}

    For \(I\), we apply the first inequality in \eqref{eq:anisotrophic-y} and the Bernstein inequality \eqref{bernie up} to bound
        \begin{align*}
\|P_{>N}^yf\|_{L^\infty_yL^4_x}\lesssim \|\<\p_x\>^{\frac14}P_{>N}^yf\|_{L^\infty_yL^2_x}\lesssim \|\<\p_x\>^{\frac14}P_{>N}^yf\|_{L^2_xL^\infty_y}
\lesssim N^{-\frac 1 4}\|\langle\p_x\rangle^{\frac 1 4}\langle \p_y\rangle^{\frac 3 4}f\|_{L^2}\lesssim N^{-\frac 1 4}\|f\|_{H^1(\R^2)},
        \end{align*}
which yields
        \[
        I\lesssim N^{-\frac 1 4}A.
        \]

        For \(II\), we apply the dispersive estimate  \eqref{dispersive estimate for nls-y} to estimate
         \begin{align*}
             \sup_{|t|\leq T}\|P_{\leq N}^y\int_0^t e^{i(t-s)\Delta}F_1(s)ds\|_{L^\infty_yL^4_x}&\lesssim\sup_{|t|\leq T}\int_0^t |t-s|^{-\frac 3 4}\|F_1(s)\|_{L^1_yL_x^{4/3}}ds\\
             &\lesssim \sup_{|t|\leq T}\int_0^t |t-s|^{-\frac 3 4}\|\rho^\epsilon\|_{L_y^1}\|(\psi^\epsilon)^2+(\psi^{\epsilon'})^2\|_{L^\infty_yL_x^{2}}\|\psi^\epsilon-\psi^{\epsilon'}\|_{L^\infty_yL^4_x}ds\\
             \\&\lesssim T^{\frac 1 4}A^2\sup_{|t|\leq T}\|\psi^\epsilon(t)-\psi^{\epsilon'}(t)\|_{L^\infty_yL^4_x}.
         \end{align*}
        \eqref{dispersive estimate for nls-y}, \eqref{eq:dualalgebraic1d}, and \eqref{1-d GN4} to obtain
            \begin{align*}
            \sup_{|t|\leq T}\left\|\int_0^t e^{i(t-s)\Delta}F_2(s)ds\right\|_{L^4_xH^{-\frac 3 4}_{y}}&\lesssim\sup_{|t|\leq T}\int_0^t \left\|e^{i(t-s)\p_x^2}\<\p_y\>^{-\frac34}F_2(s)\right\|_{L_y^2L^4_x}\,ds\\
            &\lesssim\sup_{|t|\leq T}\int_0^t |t-s|^{-\frac 1 4}\left\|F_2(s)\right\|_{L^{\frac 4 3}_xH^{-\frac 3 4}_{y}}ds\\
            &\lesssim T^{\frac34}\sup_{|t|\leq T}\| F_2(s)\|_{L^{\frac 4 3}_xH^{-\frac 3 4}_{y}}\\
            &\lesssim T^{\frac 3 4}\sup_{|t|\leq T}\|\rho^\epsilon -\rho^{\epsilon'} \|_{H^{-\frac 3 4}_y}\||\psi^{\epsilon'}|^2\psi^{\epsilon'}\|_{L^{\frac 4 3}_xH^{\frac 3 4}_{y}}\\
            &\lesssim T^{\frac 3 4}\sup_{|t|\leq T}\|\psi^{\epsilon'}\|^3_{L^{4}_xH^{\frac 3 4}_{y}}|\epsilon-\epsilon'|^{\frac 1 4}\\
            &\lesssim T^{\frac 3 4}\sup_{|t|\leq T}\|\<\p_x\>^{\frac14}\<\p_y\>^{\frac34}\psi^{\epsilon'}\|^3_{L^2(\R^2)}|\epsilon-\epsilon'|^{\frac 1 4}\\
            &\lesssim T^{\frac 3 4}A^3|\epsilon-\epsilon'|^{\frac 1 4}.
        \end{align*}
        By Bernstein's inequality at low frequency \eqref{bernie down}, this yields
        \[
        \sup_{|t|\leq T}\left\|P_{\leq N}^y\int_0^t e^{i(t-s)\Delta}F_2(s)ds\right\|_{L^\infty_{y}L^4_x}\lesssim\sup_{|t|\leq T}\left\|P_{\leq N}^y\int_0^t e^{i(t-s)\Delta}F_2(s)ds\right\|_{L^4_xL^\infty_{y}}\lesssim N^{\frac 5 4}T^{\frac 3 4}A^3|\epsilon-\epsilon'|^{\frac 1 4}.
        \]
        
        Combining all of these estimates finally gives us
        \[
        \sup_{|t|\leq T}\|\psi^{\epsilon}(t)-\psi^{\epsilon'}(t)\|_{L^\infty_yL^4_x}\lesssim T^{\frac 1 4}A^2\sup_{|t|\leq T}\|\psi^\epsilon(t)-\psi^{\epsilon'}(t)\|_{L^\infty_yL^4_x}+N^{\frac 5 4}T^{\frac 3 4}A^3|\epsilon-\epsilon'|^{\frac 1 4}+N^{-\frac 1 4}A.
        \]
        
        We now choose \(T = T(\|\psi_0\|_{H^1(\R^2)})>0\) sufficiently small to ensure that
        \(
        TA^8\ll 1.
        \)
        We then have
        \[
        \limsup_{\epsilon,\epsilon'\to0}\sup_{|t|\leq T}\|\psi^{\epsilon}(t)-\psi^{\epsilon'}(t)\|_{L^\infty_yL^4_x}\lesssim N^{-\frac 1 4}A.
        \]
         Taking the limit as \(N\to\infty\), we see that \(\psi^\epsilon\) converges in \(C([-T,T];L^\infty_yL^4_x )\) and denote the limit by \(\psi\).
        
        Next, we will prove that
        \begin{equation}\label{eq:H-2}
                \lim_{\epsilon,\epsilon'\to0} \sup_{|t|\leq T}\|\psi^{\epsilon}(t)-\psi^{\epsilon'}(t)\|_{H^{-2}(\R^2)}=0.
        \end{equation}
        We again start by decomposing,
        \begin{align*}
            \sup_{|t|\leq T}\|\psi^{\epsilon}(t)-\psi^{\epsilon'}(t)\|_{H^{-2}(\R^2)}\lesssim\underbrace{\sup_{|t|\leq T}\|\int_0^t e^{i(t-s)\Delta}F_1(s)ds\|_{H^{-2}(\R^2)}}_{=:I}+\underbrace{\sup_{|t|\leq T}\|\int_0^t e^{i(t-s)\Delta}F_2(s)ds\|_{H^{-2}(\R^2)}}_{=:II}.
        \end{align*}
        
        For \(I\), we apply \eqref{eq:anisotrophic-y-dual} to bound
        \begin{align*}
            I
            &\lesssim T\sup_{|t|\leq T}\|F_1(s)\|_{L^1_yL^{4/3}_x}\\
            &\lesssim T\|\rho_{\epsilon}\|_{L^1_y}\left(\sup_{|t|\leq T}\|\psi^{\epsilon}(t)\|_{L^\infty_yL^4_x}^2+\sup_{|t|\leq T}\|\psi^{\epsilon'}(t)\|_{L^\infty_yL^4_x}^2\right)\sup_{|t|\leq T}\|\psi^{\epsilon}(t)-\psi^{\epsilon'}(t)\|_{L^\infty_yL^4_x}\\
            &\lesssim TA^2\sup_{|t|\leq T}\|\psi^{\epsilon}(t)-\psi^{\epsilon'}(t)\|_{L^\infty_yL^4_x}.
        \end{align*}

        By \eqref{rho-diff}, \eqref{eq:anisotrophic-y}, and \eqref{1-d GNinfty dual}, we have
        \begin{align*}
            II&\lesssim T\sup_{|t|\leq T}\|\<\p_x\>^{-1}\<\p_y\>^{-1}F_2(t)\|_{L^2(\R^2)}\\
            &\lesssim T\sup_{|t|\leq T}\{\|\rho^\epsilon -\rho^{\epsilon'} \|_{H^{-1}_y}\bigl\|\<\p_x\>^{-1}\<\p_y\>\bigl(|\psi^{\epsilon'}(t)|^2\psi^{\epsilon'}(t)\bigr)\bigr\|_{L^2(\R^2)}\}\\
            &\lesssim T|\epsilon-\epsilon'|^{\frac 1 2}\sup_{|t|\leq T}\{\|\|\psi^{\epsilon'}(t)\|_{L^\infty_y}^2\|\psi^{\epsilon'}(t)\|_{H^1_y}\|_{L^1_x}\}\\
            &\lesssim T|\epsilon-\epsilon'|^{\frac 1 2}\sup_{|t|\leq T}\{\|\psi^{\epsilon'}(t)\|_{L^4_xL^\infty_y}^2\|\psi^{\epsilon'}(t)\|_{L^2_xH^1_y}\}\\
            &\lesssim T|\epsilon-\epsilon'|^{\frac 1 2}A^3.
        \end{align*}
        
        Combining these estimates, we get
        \[
         \sup_{|t|\leq T}\|\psi^{\epsilon}(t)-\psi^{\epsilon'}(t)\|_{H^{-2}(\R^2)}\lesssim TA^2\sup_{|t|\leq T}\|\psi^{\epsilon}(t)-\psi^{\epsilon'}(t)\|_{L^\infty_yL^4_x}+T|\epsilon-\epsilon'|^{\frac 1 2}A^3.
        \]
        Passing the limit as \(\epsilon,\epsilon'\to 0\) shows that \(\psi^\epsilon\to\psi\) in \(C([-T,T];H^{-2}(\R^2))\)

        This allows us to show convergence in the space of continuous functions from \([-T,T]\) to \(H^1(\R^2)\) endowed with weak topology. For any test function \(\phi\in H^{-1}(\R^2)\), we have
        \[
        \sup_{|t|\leq T}|\langle\psi^\epsilon-\psi^{\epsilon'},\phi\rangle|\leq\sup_{|t|\leq T}|\langle\psi^\epsilon-\psi^{\epsilon'},P_{\leq N}\phi\rangle|+\sup_{|t|\leq T}|\langle\psi^\epsilon-\psi^{\epsilon'},P_{>N}\phi\rangle|.
        \]
        For the first term, we use Bernstein's inequality \eqref{bernie down} to get
        \begin{align*}
            \sup_{|t|\leq T}|\langle\psi^\epsilon-\psi^{\epsilon'},P_{\leq N}\phi\rangle|&\leq\sup_{|t|\leq T}\|\psi^\epsilon-\psi^{\epsilon'}\|_{H^{-2}(\R^2)}\|P_{\leq N}\phi\|_{H^2(\R^2)}\\&\lesssim N^3\sup_{|t|\leq T}\|\psi^\epsilon-\psi^{\epsilon'}\|_{H^{-2}(\R^2)}\|\phi\|_{H^{-1}(\R^2)}
        \end{align*}
        For the second term, we simply bound
        \[
        \sup_{|t|\leq T}|\langle\psi^\epsilon-\psi^{\epsilon'},P_{>N}\phi\rangle|\lesssim A\|P_{>N}\phi\|_{H^{-1}(\R^2)}.
        \] 
        Therefore, for any dyadic \(N\), we may take \(\epsilon,\epsilon'\to0\) to get
        \[
        \lim_{\epsilon,\epsilon'\to0}\sup_{|t|\leq T}|\langle\psi^\epsilon-\psi^{\epsilon'},\phi\rangle|\lesssim A\|P_{>N}\phi\|_{H^{-1}(\R^2)}.
        \]
        Taking \(N\to\infty\), we conclude that
        \begin{equation}\label{weakconvergence}
             \lim_{\epsilon,\epsilon'\to0}\sup_{|t|\leq T}|\langle\psi^\epsilon-\psi^{\epsilon'},\phi\rangle|=0.
         \end{equation}

        At this point we have established sufficiently strong convergence to prove that \(\psi\) solves the Duhamel formulation \eqref{eq:Dulm} of \eqref{nls-y}. Precisely, for any \(\phi\in \Schwartz\) and all $|t| < T$, we have
        
    \begin{equation}\label{identification}
    \begin{aligned}
    &\lim_{\epsilon\to0} \left\langle \phi,\mu \int_0^t e^{i(t-s)\Delta}
    \bigl[\rho_\Sigma^\epsilon |\psi^\epsilon(s)|^2\psi^\epsilon(s)\bigr]\,ds \right\rangle \\
    &\quad= \lim_{\epsilon\to0} \mu\int_0^t\int_{\mathbb R^2}
    \overline{e^{-i(t-s)\Delta}\phi(x)}
    \,\rho_\Sigma^\epsilon(x) |\psi^\epsilon(s,x)|^2\psi^\epsilon(s,x)\,dx\,ds \\
    &\quad= \mu\int_0^t\int_\Sigma
    \overline{e^{-i(t-s)\Delta}\phi(y)}
    \,|\psi(s,y)|^2\psi(s,y)\,d\sigma(y)\,ds \\
    &\quad= \left\langle \phi,\mu\int_0^t e^{i(t-s)\Delta}
    \bigl[\delta_\Sigma|\psi(s)|^2\psi(s)\bigr]\,ds \right\rangle .
    \end{aligned}
    \end{equation}
        
        This suffices to prove that for all \(\psi_0\in H^1\) there exists a solution \(\psi\in C([-T,T];H_w^1(\R^2)\cap L_y^\infty L_x^4)\) of \eqref{eq:Dulm}, which is unique by Proposition~\ref{K-estimate}.
        
        To complete the proof, we will show that \begin{equation}\label{normconvergence}
    \lim_{\epsilon\to 0} \sup_{|t|\leq T}\bigl |\|\psi^\epsilon\|_{H^1(\R^2)} - \|\psi\|_{H^1(\R^2)} \bigr| \to 0.
\end{equation}
Combining this with \eqref{weakconvergence} and Radon--Riesz proves that \(\psi^\epsilon\to \psi\in C([-T,T];H^1(\R^2))\).

        For convenience, we denote
        \(V_\epsilon\) as in \eqref{V-epsilon definition} and take
        \[
            V_0(\psi):=\int_{\Sigma}|\psi|^4\,dS.
        \]
        The limiting and regularized energy functionals are then given by
        \[
        E(\psi)=\frac12\|\nabla \psi\|_{L^2}^2+\frac{\mu}{2}V_0(\psi)
        \qtq{
        and
        }
        E_\epsilon(\psi)=\frac12\|\nabla \psi\|_{L^2}^2 +\frac{\mu}{2}V_\epsilon(\psi).
        \]
        Therefore, we have
        \[
        \|\nabla \psi\|_{L^2}^2=2E(\psi)-\mu V_0(\psi)
        \qtq{
        and
        }\|\nabla \psi^\epsilon\|_{L^2}^2 =2E_\epsilon(\psi^\epsilon)-\mu V_\epsilon(\psi^\epsilon).
        \]        
        Applying these identities to \(\psi^\epsilon(t)\) and \(\psi(t)\),
        we obtain
        \begin{align}
        \|\nabla\psi^\epsilon(t)\|_{L^2}^2-\|\nabla\psi(t)\|_{L^2}^2
        =2\Bigl(E_\epsilon(\psi^\epsilon(t))-E(\psi(t))\Bigr)
        -\mu\Bigl(V_\epsilon(\psi^\epsilon(t))-
        V_0(\psi(t))\Bigr).
        \label{eq:gradient-energy-difference}
        \end{align}
        
        We first establish convergence of the initial energies. By definition,
        \[
        E_\epsilon(\psi_0)-E(\psi_0)=\frac{\mu}{2}\Bigl(V_\epsilon(\psi_0)-V_0(\psi_0)\Bigr),
        \]
        and Lemma~\ref{lem:Aepsilon} gives us
        \eq{init energy converges}{
        E_\epsilon(\psi_0)\longrightarrow E(\psi_0)
        \qquad\text{as }\epsilon\to0.
        }
        
        Next, we prove convergence of the potential energy along the
        approximating solutions. We use the decomposition
        \begin{align}\label{ND0}
        \left|V_\epsilon(\psi^\epsilon(t))-
        V_0(\psi(t))\right|
        &\leq
        \left|V_\epsilon(\psi^\epsilon(t))-V_0(\psi^\epsilon(t))
        \right|
        +
        \left|V_0(\psi^\epsilon(t))-V_0(\psi(t))\right|.
        \end{align}
        The first term vanishes uniformly for \(|t|\leq T\) by
        Lemma~\ref{lem:Aepsilon}, together with the uniform \(H^1(\R^2)\)-bound
        on \(\psi^\epsilon\).
        \begin{align*}
        \left|V_0(\psi^\epsilon)-V_0(\psi)\right|&\lesssim\left(\|\psi^\epsilon\|_{L^4(\Sigma)}^3+\|\psi\|_{L^4(\Sigma)}^3\right)\|\psi^\epsilon-\psi\|_{L^4(\Sigma)},
        \end{align*}

        and then apply \eqref{eq:L4Cy} to obtain
        \eq{pot energy converges}{
        \lim_{\epsilon\to0}
        \sup_{|t|\leq T}\left|
        V_\epsilon(\psi^\epsilon(t)) -V_0(\psi(t))\right|=0.
        }

        By combining the convergence of the potential energy with the weak  convergence of \(\psi^\epsilon(t)\to \psi(t)\) in \(H^1(\R^2)\), we obtain   
        \begin{align*}
        M(\psi(t))
        &\leq \liminf_{\epsilon\to0}M(\psi^\epsilon(t))
        =
        M(\psi_0),\\
        &\leq
        \liminf_{\epsilon\to0}\left(
        \frac12\|\nabla\psi^\epsilon(t)\|_{L^2}^2+
        \frac{\mu}{2}V_\epsilon(\psi^\epsilon(t))\right)=\lim_{\epsilon\to0}E_\epsilon(\psi_0)        =E(\psi_0).
        \end{align*}
        These inequalities hold for both signs of \(\mu\), since the
        potential energy converges.        
 
        Lemma~\ref{Conservation-limiting} now yields conservation of mass and energy on
        \([0,T]\); the same argument applies to negative times.
        
        From conservation of energy and \eqref{init energy converges}, we conclude that
        \[
        E_\epsilon(\psi^\epsilon(t))
        =E_\epsilon(\psi_0)
        \longrightarrow
        E(\psi_0)=
        E(\psi(t)),
        \]
        as \(\epsilon \to0\), uniformly for all \(|t|\leq T\). Combining this with \eqref{eq:gradient-energy-difference}, conservation of mass, and convergence of the potential energy \eqref{pot energy converges} yields \eqref{normconvergence}, completing the proof.
\end{proof}

Now we have proved local existence and uniqueness in \(H^1(\R^2)\), an identical bootstrap argument to Lemma~\ref{lem:uniform-regularized-control} proves the following:

\begin{cor}
    Let  \(n=2\), there exists a unique strong global solution \(\psi\in C(\R;\,H^1(\R^2))\) to \eqref{nls-y} with initial data $\psi_0\in H^1(\R^2)$ if either of the following two conditions holds:

(i)  \eqref{nls-y} is defocusing;

(ii) \(\|\psi_0\|_{H^1(\R^2)}\) is sufficiently small.

This solution conserves both the mass and the energy.

\end{cor}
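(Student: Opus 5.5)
The plan is to iterate the local theory of Proposition~\ref{p:10-15}, using conservation of mass and energy to keep the $H^1$ norm under control, so that the local existence time never shrinks. The key point is that Proposition~\ref{p:10-15} produces a solution on $[-T,T]$ with $T=T(\|\psi_0\|_{H^1(\R^2)})$, and $T$ depends only on an upper bound for the $H^1$ norm of the data. This $T$ is determined by the condition $TA^8\ll 1$, where $A$ is the uniform bound from Lemma~\ref{lem:uniform-regularized-control}. So once I have an a priori $H^1$ bound, $B$, valid at every time, I can restart from any time $t_0$ with the same step $T(B)$ and cover $\R$ in finitely many steps on each compact interval.

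\textbf{Step 1: a priori bound.} For the solution on $[-T,T]$, Proposition~\ref{p:10-15} already gives conservation of $M$ and $E$. I also have the trace bound $V_0(\psi)\lesssim\|\psi\|_{H^1}^4$, from \eqref{V4estimate} or the trace/Sobolev embedding. Repeating the argument of Lemma~\ref{lem:uniform-regularized-control} with $V_\epsilon$ replaced by $V_0$ gives the following bounds.
\begin{itemize}
\item In the defocusing case, $\|\psi(t)\|_{H^1}^2\le \|\psi_0\|_{H^1}^2+C\|\psi_0\|_{H^1}^4=:B^2$.
\item In the focusing case, with $\|\psi_0\|_{H^1}\le\delta$ and $16C\delta^2\le1$, the continuity/bootstrap argument (using $\psi\in C(I;H^1)$) yields $\|\psi(t)\|_{H^1}^2\le 2\|\psi_0\|_{H^1}^2$.
\end{itemize}
These bounds hold on whatever interval the solution exists and conserves $M$ and $E$. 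Crucially, the quantities $M$ and $E$ at later times equal those of $\psi_0$. The bounds therefore propagate without loss, and the focusing smallness is inherited in terms of $E$ and $M$. I will phrase the smallness as $\|\psi_0\|^2_{H^1}\le\delta^2$, and then note the restarted data has norm at most $\sqrt2\,\delta$. To avoid degradation, I will choose $\delta$ so that the local theory applies to data of size $\sqrt2\,\delta$, and bootstrap always relative to the original $M(\psi_0),E(\psi_0)$ rather than to the restarted norm.

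\textbf{Step 2: continuation and gluing.} Set $T_*=T(B)$, with $B$ as above, where $B=\sqrt2\|\psi_0\|_{H^1}$ in the focusing case. Starting at $t_0=T_*$ with data $\psi(t_0)$, I apply Proposition~\ref{p:10-15}, using time-translation invariance of \eqref{nls-y}, to get a solution on $[t_0-T_*,t_0+T_*]$. By the uniqueness in Proposition~\ref{K-estimate} it agrees with the previous one on the overlap, so the Duhamel formula \eqref{eq:Dulm} holds on the union; this requires the standard check that Duhamel from $0$ and Duhamel from $t_0$ are compatible via the group property. Mass and energy remain conserved on the extended interval. Induction then gives a solution in $C(\R;H^1(\R^2))$. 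Uniqueness on any interval follows from Proposition~\ref{K-estimate}, since any strong $H^1$ solution lies in $C(I;H^1_w\cap L^\infty_yL^4_x)$ by \eqref{eq:anisotrophic-y}. Negative times are handled identically.

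\textbf{Main obstacle.} The delicate point is the focusing case: one must ensure that the smallness threshold required by Proposition~\ref{p:10-15} (via Lemma~\ref{lem:uniform-regularized-control} applied to the regularized flow started at $\psi(t_0)$) is met at every restart. I expect to handle this by shrinking $\delta$ once at the outset, so that $2\delta^2$ is below the regularized threshold. The energy-based bound $\|\psi(t)\|_{H^1}^2\le 2\|\psi_0\|^2_{H^1}$ then holds globally, because it is derived from the conserved quantities of the original data, not from the restarted data.
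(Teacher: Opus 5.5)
Your proposal is correct and follows the paper's route. The paper's proof is the single remark that, once local existence, uniqueness and conservation (Propositions~\ref{p:10-15} and~\ref{K-estimate}) are in hand, the bootstrap of Lemma~\ref{lem:uniform-regularized-control} with \(V_\epsilon\) replaced by \(V_0\) gives a uniform \(H^1\) bound and hence global existence by iteration. Your treatment of the uniform time step, the gluing via uniqueness, and the focusing case supplies exactly the details the paper leaves implicit; in the focusing case you bootstrap against the original conserved quantities and shrink \(\delta\) so that restarted data of size \(\sqrt2\,\delta\) stays below the regularized threshold.
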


To complete the proof of Theorem~\ref{thrm:GWPy} in the case \(\Sigma = \{y=0\}\), it remains to prove the solution map is continuous:
    
\begin{prop}\label{Ctsdependence}
If \(\psi_{n,0}\to\psi_0\) is a convergent sequence of \(H^1(\R^2)\) initial data with corresponding solutions \(\psi_n,\psi\in C(\R;H^1(\R^2))\) of \eqref{nls-y} in either the defocusing case or focusing case with sufficiently small data, then for any time \(0<T<\infty\) we have
\[
\lim_{n\to\infty}\sup_{|t|\leq T}\|\psi_n(t)-\psi(t)\|_{H^1(\R^2)}=0.
\]
\end{prop}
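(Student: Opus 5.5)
We prove continuous dependence by the same weak-convergence-plus-norm-convergence strategy used in Proposition~\ref{p:10-15}, with Proposition~\ref{K-estimate} supplying the convergence in the weaker topology.

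\textbf{Step 1: uniform bounds.} Since \(\psi_{n,0}\to\psi_0\) in \(H^1(\R^2)\), the data are bounded in \(H^1\), and in the focusing case they are eventually small. Conservation of mass and energy, the trace bound \(\int_\Sigma|\psi|^4\,dS\lesssim\|\psi\|_{H^1}^4\), and the bootstrap of Lemma~\ref{lem:uniform-regularized-control} then give
\[
A:=\sup_n\sup_{t\in\R}\bigl(\|\psi_n(t)\|_{H^1}+\|\psi(t)\|_{H^1}\bigr)<\infty .
\]
By \eqref{eq:anisotrophic-y}, the quantity \(K\) of Proposition~\ref{K-estimate} is bounded by \(A^2\) uniformly in \(n\).

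\textbf{Step 2: convergence in a weak topology.} Apply Proposition~\ref{K-estimate} on \([-T,T]\) to the pair \(\psi_n,\psi\). Its proof gives, besides the \(H^{-1}\) bound,
\[
\sup_{|t|\le T}\|\psi_n(t)-\psi(t)\|_{L^\infty_yL^4_x}\lesssim e^{CA^{16}T^2}\|\psi_{n,0}-\psi_0\|_{H^1}\to0,
\qquad
\sup_{|t|\le T}\|\psi_n(t)-\psi(t)\|_{H^{-1}}\to0 .
\]
Because the \(H^1\) norms are bounded by \(A\), the \(H^{-1}\) convergence upgrades to weak \(H^1\) convergence uniformly in \(t\). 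To see this, split a test function \(\phi\in H^{-1}\) as \(P_{\le N}\phi+P_{>N}\phi\), exactly as in the derivation of \eqref{weakconvergence}.

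\textbf{Step 3: convergence of the norms.} Mass conservation gives \(\|\psi_n(t)\|_{L^2}=\|\psi_{n,0}\|_{L^2}\to\|\psi_0\|_{L^2}=\|\psi(t)\|_{L^2}\). For the gradient write
\[
\|\nabla\psi_n(t)\|_{L^2}^2-\|\nabla\psi(t)\|_{L^2}^2=2\bigl(E(\psi_{n,0})-E(\psi_0)\bigr)-\mu\bigl(V_0(\psi_n(t))-V_0(\psi(t))\bigr).
\]
The first term tends to zero because \(E\) is continuous on \(H^1\), again by the trace bound. The second term is controlled by
\[
(\|\psi_n\|_{L^4(\Sigma)}^3+\|\psi\|_{L^4(\Sigma)}^3)\,\|\psi_n-\psi\|_{L^4(\Sigma)}\lesssim A^3\|\psi_n-\psi\|_{L^\infty_yL^4_x},
\]
which vanishes uniformly on \([-T,T]\) by Step 2. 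Hence \(\|\psi_n(t)\|_{H^1}\to\|\psi(t)\|_{H^1}\) uniformly for \(|t|\le T\).

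\textbf{Step 4: Radon--Riesz, uniformly in time.} Pointwise in \(t\), weak convergence together with convergence of norms gives strong convergence. Uniformity in \(t\) is the delicate point. We use the expansion
\[
\|\psi_n(t)-\psi(t)\|_{H^1}^2=\|\psi_n(t)\|_{H^1}^2-\|\psi(t)\|_{H^1}^2-2\,\mathrm{Re}\langle\psi_n(t)-\psi(t),\psi(t)\rangle_{H^1}.
\]
The first difference tends to zero uniformly by Step 3. For the inner product, note that \(\{\psi(t):|t|\le T\}\) is compact in \(H^1\), since \(\psi\in C([-T,T];H^1)\). Weak convergence that is uniform against each fixed test function, combined with the uniform bound \(A\), is then uniform over this compact set, by approximating it with a finite net. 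This gives \(\sup_{|t|\le T}\langle\psi_n(t)-\psi(t),\psi(t)\rangle_{H^1}\to0\), which completes the proof.

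I expect the main obstacle to be this uniform-in-time Radon--Riesz step; the compactness of the orbit \(\{\psi(t):|t|\le T\}\) is what resolves it.
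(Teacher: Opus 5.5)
Your proof is correct and follows essentially the same route as the paper: uniform $H^1$ bounds from the conservation laws and the trace bound, the Lipschitz estimate of Proposition~\ref{K-estimate}, convergence of the $H^1$ norms via mass and energy conservation, and the Hilbert-space identity closed by compactness of the orbit $\{\psi(t):|t|\le T\}$. The differences are only cosmetic. You read off $L^\infty_yL^4_x$ convergence directly from the Gronwall bound inside the proof of Proposition~\ref{K-estimate}, whereas the paper interpolates $H^{-1}$ against $H^1$ to reach $H^{3/4}$ and then applies the trace theorem; and you exploit compactness through a finite net in $H^1$, whereas the paper uses the uniform smallness of $P_{>N}(1-\Delta)\psi(t)$ in $H^{-1}$.
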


\begin{proof}

By conservation of mass and energy and the trace bound
\(V_0(u)\lesssim\|u\|_{H^1(\R^2)}^4\), the argument of
Lemma~\ref{lem:uniform-regularized-control}, with \(V_\epsilon\)
replaced by \(V_0\), shows that there exists \(0<A<\infty\)
such that
\[\sup_{n\geq 1}\sup_{|t|\leq T} \|\psi_n \|_{H^1(\R^2)} + \sup_{|t|\leq T} \|\psi\|_{H^1(\R^2)} \leq A.\]
We may then apply Proposition~\ref{K-estimate} to obtain
\[\lim_{n\to \infty } \sup_{|t| \leq T} \|\psi_{n}(t) - \psi(t)\|_{H^{-1}(\R^2 )} = 0.\]

We may upgrade this convergence by interpolation to $H^{\frac{3}{4}}$, 
\[ \|\psi_{n}(t) - \psi(t)\|_{H^{3/4}(\R^2 )} \lesssim \|\psi_{n}(t) - \psi(t)\|^{1/8}_{H^{-1}(\R^2)} \|\psi_{n}(t) - \psi(t)\|^{7/8}_{H^1(\R^2)}\lesssim \|\psi_{n}(t) - \psi(t)\|^{1/8}_{H^{-1}(\R^2)} A^{7/8},\]
and hence
\begin{equation}\label{H34}
\lim_{n\to \infty } \sup_{|t| \leq T} \|\psi_{n}(t) - \psi(t)\|_{H^{3/4}(\R^2 )} = 0.
\end{equation}
By Sobolev embedding and the trace theorem, we then have
\begin{align*}
\lim_{n\to \infty } \sup_{|t| \leq T} \|\psi_{n}(t) - \psi(t)\|_{L^4(\Sigma)} \lesssim \lim_{n\to \infty } \sup_{|t| \leq T} \|\psi_{n}(t) - \psi(t)\|_{H^{3/4}(\R^2)} = 0.
\end{align*}

Next, we estimate
\begin{align*}
\sup_{|t|\leq T}\left|\|\psi_n(t)\|_{H^1(\R^2)}^2-\|\psi(t)\|_{H^1(\R^2)}^2\right|&\leq
2\left|M[\psi_{n,0}]-M[\psi_0]\right|+2\left|E[\psi_{n,0}]-E[\psi_0]\right|
\\
&+\sup_{|t|\leq T}\left|\int_{\Sigma}|\psi_n(t)|^4\,dS-
\int_{\Sigma}|\psi(t)|^4\,dS\right|
\end{align*}
By H\"older's inequality, we have
\[\label{dependh1cvg}\sup_{t\leq T}\left|\int_{\Sigma}|\psi_n(t)|^4\,dS-
\int_{\Sigma}|\psi(t)|^4\,dS\right| \lesssim \left( \sup_{t\leq T}\|\psi_n(t)\|^{3}_{L^4(\Sigma))} + \sup_{t\leq T} \|\psi(t)\|^{3}_{L^4(\Sigma))}\right) \sup_{t\leq T}\|\psi_n(t) - \psi(t)\|_{L^4(\Sigma)},\]
and hence
\[\lim_{n\to\infty}\sup_{|t|\leq T}\left|\|\psi_n(t)\|_{H^1(\R^2)}^2-\|\psi(t)\|_{H^1(\R^2)}^2\right|= 0.\]

To complete the proof, we observe that
\[
\|\psi_n(t) - \psi(t)\|_{H^1(\R^2)}^2
=
\|\psi_n(t)\|_{H^1(\R^2)}^2-
\|\psi(t)\|_{H^1(\R^2)}^2-2\operatorname{Re}\langle \psi(t),\psi_n(t) - \psi(t)\rangle_{H^1(\R^2)}.
\]
Consequently, it suffices to show that
\[\lim_{n\to\infty}\sup_{|t|\leq T} \left|\langle (1-\Delta )\psi(t) , \psi_n(t) - \psi(t)\rangle \right|=0.\]

To prove this, we observe that \[\{(1-\Delta)\psi(t): |t| \leq T\} \subset H^{-1}(\R^2).\] is a continuous image of compact set and therefore compact in $H^{-1}(\R^2)$. In particular,
\[
\lim_{N\to\infty}\sup_{|t|\leq T}\|P_{>N}(1-\Delta)\psi(t)\|_{H^{-1}(\R^2)} = 0.
\]
We then have
\begin{align*}
\left|\langle (1-\Delta )\psi(t) , \psi_n(t) - \psi(t)\rangle \right| & \leq \left|\langle (1-\Delta )P_{\leq N}\psi(t) , \psi_n(t) - \psi(t)\rangle \right| + \left|\langle (1-\Delta )P_{>N}\psi(t) , \psi_n(t) - \psi(t)\rangle \right|\\
&\lesssim N^2\|\psi(t)\|_{H^1(\R^2)}\|\psi_n(t) - \psi(t)\|_{H^{-1}(\R^2)}\\
&\quad + \|P_{>N}(1-\Delta)\psi(t)\|_{H^{-1}(\R^2)}\|\psi_n(t) - \psi(t)\|_{H^1(\R^2)}.
\end{align*}
Taking the supremum in \(t\), passing to the limit as \(n\to\infty\) and then as \(N\to \infty\) completes the proof.

\end{proof}

\section{Global Wellposedness for \eqref{nls-S} in \(H^1(\R^2)\)}\label{sec:radial}
In this section we prove Theorem~\ref{thrm:GWPy} in the case of the sphere, \eqref{nls-S}.

In this section, when we write \(L_\theta^p\) or \(H_\theta^s\), we mean the spaces generated by measure \(d\theta\) with \(\theta\in \T = \R/2\pi\Z\). We write \(L_{r}^p\) and \(H_{r}^s\) to denote the \(1d\) Lebesgue and Sobolev spaces with \(r\in \R\). We take \(L_{r,+}^p\) and \(H_{r,+}^s\) to denote the corresponding \(1d\) Lebesgue and Sobolev spaces with \(r\in (0,\infty)\). Finally, we take \(L^p(rdr)\) and \(H^s(rdr)\) to denote the Lebesgue and Sobolev spaces for radial \(2d\) functions, with norms
\begin{alignat*}{3}
\|f\|_{L^p(rdr)}&:=\left(\int_0^\infty |f|^p\,r\,dr\right)^{\frac 1 p} \quad &&\text{for }1\leq p <\infty,\\
\|f\|_{H^s(rdr)}&:=\left(\int_0^\infty (1+\rho^2)^s|\tilde f(\rho)|^2\,\rho\,d\rho\right)^{\frac 1 2}\quad&&\text{for \(s\in \R\)},
\end{alignat*}
where \(\tilde f(\rho)\) is the Hankel transform of \(f\) defined as
\[
\tilde f(\rho):=\mathcal H_0[f](\rho):=\int_0^\infty f(r)J_0(r\rho)\,r\,dr,
\]
where \(J_0\) is the Bessel function of the first kind.
We note that \(L^\infty(rdr)=L^\infty_{r,+}\).

Below we define Littlewood--Paley projections in polar coordinate \(P^r_{\geq N}\) and \(P^\theta_{\geq N}\) for a compactly supported function $f\colon(0,\infty)\times \T\to \C$.

Given dyadic \(N\geq 1\), let $E$ be the zero extension operator given by
\[
 Ef(r,\theta)=\begin{cases}
    f(r,\theta) &r\geq 0,\\
    0 & r<0.
\end{cases}
\]
We then define 
\[
\widehat{P^r_{\leq N}Ef}(\rho,\theta):=\varphi\left(\frac{\rho}{N}\right)\widehat{Ef}(\rho,\theta),
\]
\(\varphi\in C_c^\infty(-2,2)\) is a non-negative bump function that is identically \(1\) on \([-1,1]\) and
\[
\widehat{Ef}(\rho,\theta) = \frac1{\sqrt{2\pi}}\int_\R Ef(r,\theta)e^{-ir\rho}\,dr,
\]
is the usual Fourier transform on \(\R\). In the angular variable, we define
\[P_{\leq N }^\theta f(r,\theta) = \sum_{ |n|\leq N } \frac 1 {2\pi}\int_{0}^{2\pi} f(r,\omega)e^{in(\theta - \omega)}{d\omega}.\]
We denote \(P^r_{>N}:=1-P^r_{\leq N}\), and \(P^\theta_{>N}:=1-P^\theta_{\leq N}\).

Associated to these estimates, we have the following Bernstein inequalities, with proof as in \cite{Tao2006NonlinearDispersive}:
\begin{lem}\label{Bernstein_r} For all compactly supported \(f\colon (0,\infty)\times \T\to \C\) we have the estimates,
\begin{align}
    &\|P^r_{\leq N}Ef\|_{L^\infty_r}\lesssim N^{-s+\frac 1 2}\|Ef\|_{H^s_r}=N^{-s+\frac 1 2}\|f\|_{H^s_{r,+}} \qquad s<\frac 1 2,\label{bernie down r}\\
    &\|P^r_{> N}Ef\|_{L^\infty_r}\lesssim N^{-s+\frac 1 2}\|Ef\|_{H^s_r}=N^{-s+\frac 1 2}\|f\|_{H^s_{r,+}} \qquad s>\frac 1 2,\label{bernie up r}\\
    &\|P^\theta_{\leq N}f\|_{L^4_\theta}\lesssim N^{-s+\frac 1 4}\|P^\theta_{\leq N}f\|_{H^s_\theta}\lesssim N^{-s+\frac 1 4}\|f\|_{H^s_\theta} \qquad s\leq\frac 1 4,\label{bernie down theta}\\
    &\|P^\theta_{> N}f\|_{L^4_\theta}\lesssim N^{-s+\frac 1 4}\|P^\theta_{> N}f\|_{H^s_\theta}\lesssim N^{-s+\frac 1 4}\|f\|_{H^s_\theta} \qquad s\geq\frac 1 4.\label{bernie up theta}
\end{align}
\end{lem}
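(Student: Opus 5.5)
The four estimates in Lemma~\ref{Bernstein_r} split into two pairs: the radial estimates \eqref{bernie down r}--\eqref{bernie up r} and the angular estimates \eqref{bernie down theta}--\eqref{bernie up theta}. In both pairs the plan is the same. We first record the identity relating the norm on the half-line (respectively on \(\T\)) to a whole-line (respectively discrete) Fourier norm. We then use Cauchy--Schwarz on the Fourier side for the \(L^\infty\) bounds, and Hausdorff--Young in the \(L^4\) case. The second inequality in each line is just the boundedness of the Fourier multiplier \(P_{\leq N}\) or \(P_{>N}\) on \(H^s\), since \(\varphi\) and \(1-\varphi\) are bounded by \(1\).

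\emph{Radial estimates.} The equality \(\|Ef\|_{H^s_r}=\|f\|_{H^s_{r,+}}\) is the identification of \(H^s_{r,+}\) with the space of restrictions of zero extensions. This requires some care, since for \(s>\frac12\) the zero extension lies in \(H^s_r\) only if \(f\) vanishes near \(r=0\). For the compactly supported functions in question (in practice supported in an annulus away from \(0\)) we take this equality as the definition of the norm, or note that the two norms are equivalent. With this in hand, the estimates are the standard one-dimensional Bernstein inequalities of Lemma~\ref{Bernstein_y} applied to \(g=Ef\). For \(s>\frac12\) we write
\[
|P^r_{>N}g(r)|\lesssim\int_{|\rho|\geq N}|\hat g(\rho)|\,d\rho\leq \Bigl(\int_{|\rho|\geq N}\<\rho\>^{-2s}\,d\rho\Bigr)^{1/2}\|g\|_{H^s}\lesssim N^{\frac12-s}\|g\|_{H^s}.
\]
For \(s<\frac12\) the same Cauchy--Schwarz argument on \(\{|\rho|\leq 2N\}\) gives \(\bigl(\int_{|\rho|\leq 2N}\<\rho\>^{-2s}\,d\rho\bigr)^{1/2}\lesssim N^{\frac12-s}\), where we use that \(-2s>-1\). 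When \(s<0\), the integrand \(\<\rho\>^{-2s}\) grows, but the integral is still of order \(N^{1-2s}\), so the bound is unchanged.

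\emph{Angular estimates.} We work with Fourier series \(f(\theta)=\sum_n c_ne^{in\theta}\), where \(\|f\|_{H^s_\theta}^2\sim\sum\<n\>^{2s}|c_n|^2\). By Hausdorff--Young on \(\T\), \(\|f\|_{L^4_\theta}\lesssim\|c\|_{\ell^{4/3}}\). Applying Hölder with exponents \(\frac{3}{2}\) and \(3\) to \(|c_n|^{4/3}=(\<n\>^{s}|c_n|)^{4/3}\<n\>^{-4s/3}\) gives
\[
\|c\|_{\ell^{4/3}(A)}\leq\Bigl(\sum_{n\in A}\<n\>^{-4s}\Bigr)^{1/4}\Bigl(\sum_{n}\<n\>^{2s}|c_n|^2\Bigr)^{1/2},
\]
for a frequency set \(A\). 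For \(A=\{|n|>N\}\) and \(s>\frac14\), the first factor is \(\lesssim (N^{1-4s})^{1/4}=N^{\frac14-s}\). For \(A=\{|n|\leq N\}\) and \(s<\frac14\), it is \(\lesssim N^{\frac14-s}\).

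\emph{Main obstacle: the endpoint \(s=\frac14\).} This is the step I expect to be the main difficulty. At \(s=\frac14\) the sum \(\sum\<n\>^{-1}\) diverges logarithmically, so the Hausdorff--Young argument only gives a loss of \((\log N)^{1/4}\). To avoid this I would invoke the sharp Sobolev embedding \(H^{1/4}(\T)\hookrightarrow L^4(\T)\), which is the periodic version of \eqref{1-d GN4}. Combined with the boundedness of \(P^\theta_{\leq N}\) and \(P^\theta_{>N}\) on \(H^{1/4}_\theta\), this gives the endpoint case with \(N^{0}\), as required in both \eqref{bernie down theta} and \eqref{bernie up theta}. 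As a fallback, one could instead prove the \(L^4\) bounds by a Littlewood--Paley decomposition into dyadic shells, using \(\|P_Mf\|_{L^4}\lesssim M^{1/4}\|P_Mf\|_{L^2}\) on each shell and summing, which closes for \(s\neq\frac14\) and reduces the endpoint to the embedding above.
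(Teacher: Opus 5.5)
Your proposal is correct, and it is more self-contained than what the paper does. The paper gives no argument beyond referring to the standard Bernstein inequalities in Tao's book. Those are usually proved via kernel bounds and Young's inequality on dyadic Littlewood--Paley blocks, followed by summation over scales.

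You instead work directly on the Fourier side: Cauchy--Schwarz against \(\langle\rho\rangle^{-2s}\) for the \(L^\infty_r\) bounds, and Hausdorff--Young followed by H\"older for the \(L^4_\theta\) bounds. This is the natural route for \(L^2\)-based norms, and it has two small advantages here.

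First, it applies verbatim to the sharp angular truncations \(P^\theta_{\leq N}\). Their Dirichlet kernel is not uniformly bounded in \(L^1_\theta\), so a kernel-based proof would first have to insert a smooth multiplier.

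Second, you correctly isolate the endpoint \(s=\frac14\) in \eqref{bernie down theta} and \eqref{bernie up theta}. There neither your H\"older argument nor dyadic summation closes: a power of \(\log N\) is lost for the low-frequency piece, and \(\sum_{|n|>N}\langle n\rangle^{-1}\) diverges outright for the high-frequency piece. So these cases genuinely rest on the sharp embedding \(H^{1/4}(\T)\hookrightarrow L^4(\T)\). The paper has this available as \eqref{1-d GN4} with \(M=\T\), but does not invoke it here.

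One minor correction: for \(s>\frac12\), vanishing near \(r=0\) is sufficient but not necessary for \(Ef\in H^s_r\). For \(\frac12<s<\frac32\) the condition is \(f(0)=0\). This is immaterial, since your \(f\) has compact support in \((0,\infty)\).
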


At the heart of our argument is the following dispersive estimate. This gives an improved short time bound over the classical dispersive estimate \eqref{dispersive} by trading spatial weights for decay. It crucially relies on a uniform bound for Bessel functions due to Landau \cite{Landou2000Bessel}.
\begin{lem}\label{l:dispersive estimate in section 4}
For all \(t\neq 0\), we have the weighted dispersive estimates
    \begin{align}
    \|r^{\frac 16}e^{it\Delta}f\|_{C_{0,r,+}L^4_\theta }&\lesssim |t|^{-\frac 56}\|r^{-\frac 16}f\|_{ L^1(rdr)L^{\frac 4 3}_\theta},\label{dispersive-S-C}\\
    \|r^{\frac 16}e^{it\Delta}f\|_{L_{r,+}^\infty L^4_\theta }&\lesssim |t|^{-\frac 56}\|r^{\frac 56}f\|_{ \M_{r,+}L^{\frac 4 3}_\theta},\label{dispersive-S-M}
    \end{align}
    where \(\M_{r,+}\) denotes the space of finite signed measures on \((0,\infty)\), i.e., the dual of \(C_{0,r,+}\) with respect to the usual pairing on \((0,\infty)\).
\end{lem}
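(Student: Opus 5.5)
I will prove \eqref{dispersive-S-C} by interpolating two endpoint weighted estimates, in the same spirit as Lemma~\ref{lem:dispersive_y}, and then obtain \eqref{dispersive-S-M} by duality.

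\emph{First endpoint (angular \(L^2\)).} Expand \(f(r,\theta)=\sum_k f_k(r)e^{ik\theta}\). The propagator acts on each angular mode by the Hankel transform of order \(|k|\):
\[
e^{it\Delta}(f_ke^{ik\theta})(r)=e^{ik\theta}\frac{1}{2it}\int_0^\infty e^{i\frac{r^2+s^2}{4t}}J_{|k|}\Bigl(\frac{rs}{2t}\Bigr)(\pm i)^{|k|}f_k(s)\,s\,ds .
\]
Landau's uniform bound \(|J_\nu(x)|\lesssim x^{-1/3}\), valid uniformly in \(\nu\geq0\), gives
\[
|e^{it\Delta}(f_ke^{ik\theta})(r)|\lesssim |t|^{-1}\Bigl(\frac{r}{|t|}\Bigr)^{-1/3}\int_0^\infty s^{-1/3}|f_k(s)|\,s\,ds .
\]
Summing the squares over \(k\), using Plancherel in \(\theta\) and Minkowski's inequality, I expect
\[
\|r^{1/3}e^{it\Delta}f\|_{L^\infty_{r,+}L^2_\theta}\lesssim |t|^{-2/3}\|r^{-1/3}f\|_{L^1(rdr)L^2_\theta}.
\]

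\emph{Second endpoint (unweighted).} The standard two-dimensional estimate \eqref{dispersive} reads
\[
\|e^{it\Delta}f\|_{L^\infty_{r,+}L^\infty_\theta}\lesssim |t|^{-1}\|f\|_{L^1(rdr)L^1_\theta}.
\]

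\emph{Interpolation.} I will apply mixed-norm Riesz--Thorin \cite{Blozinski1981MultivariateRA} with \(\theta=\tfrac12\), treating the weights by Stein's interpolation for analytic families (weights \(r^{z/3}\)); equivalently, I can absorb the weights into the measures. At \(\theta=\tfrac12\) the angular exponents become \(L^{4/3}_\theta\to L^4_\theta\), the weights become \(r^{1/6}\) and \(r^{-1/6}\), and the decay becomes \(|t|^{-(2/3+1)/2}=|t|^{-5/6}\). This is exactly \eqref{dispersive-S-C} in its \(L^\infty_{r,+}\) version. The upgrade from \(L^\infty_{r,+}\) to \(C_{0,r,+}\) then follows by density: for Schwartz \(f\), Lemma~\ref{dispersive-M} gives \(e^{it\Delta}f\in C_0(\R^2)\), and the weight \(r^{1/6}\) causes no trouble. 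For this last step I would either assume \(f\) is compactly supported away from \(0\) and note that \(r^{1/6}e^{it\Delta}f\) vanishes as \(r\to\infty\), or argue directly from the Bessel bound.

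\emph{Duality.} The estimate \eqref{dispersive-S-M} is the adjoint statement. Pair against \(g\in L^1(rdr)L^{4/3}_\theta\) and use that \(e^{-it\Delta}\) is the adjoint of \(e^{it\Delta}\). The weight \(r^{5/6}\) arises from converting \(\M_{r,+}\), which is dual to \(C_{0,r,+}\) in \(dr\), into the \(r\,dr\) pairing: \(r^{-1/6}\cdot r=r^{5/6}\).

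The main obstacle is the rigorous interpolation with power weights and mixed norms where the target is \(L^\infty_r\). I would first try Stein's analytic interpolation applied to the operator \(T_z=r^{z/3}e^{it\Delta}r^{z/3}\), proving the two endpoints at \(\operatorname{Re} z=0\) and \(\operatorname{Re} z=1\) with admissible growth in \(\operatorname{Im} z\). A further point to check is that Landau's bound is indeed uniform in the order \(|k|\), since this uniformity is what makes the angular summation work.
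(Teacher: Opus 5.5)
Your proposal is correct and follows essentially the same route as the paper: a mode-by-mode Hankel representation with Landau's uniform Bessel bound gives the $L^\infty_{r,+}L^2_\theta$ endpoint with weights $r^{\pm 1/3}$. This is interpolated against the unweighted two-dimensional dispersive estimate, followed by a density argument for the $C_{0,r,+}$ upgrade and duality in the $dr\,d\theta$ pairing, where the paper's adjoint is $r^{1/6}e^{-it\Delta}r^{-5/6}$. Your analytic family $r^{z/3}e^{it\Delta}r^{z/3}$ is a cleaner justification of the weighted interpolation than the paper's bare citation of mixed-norm Riesz--Thorin; keep that route rather than ``absorbing the weights into the measures,'' since the weight on the $L^\infty_r$ side cannot be absorbed that way.
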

\begin{proof}
We assume that \(t>0\). The corresponding proof for \(t<0\) follows by symmetry. We first aim to show that
\begin{align}\label{09-15-22-57}
    \|r^{\frac 13}e^{it\Delta}f\|_{L^\infty_{r,+} L^2_\theta}\lesssim |t|^{-\frac 23}\|r^{-\frac 13}f\|_{ L^1(rdr)L^2_\theta}.
\end{align}

    Let us first decompose
    \[
    f(s,\phi)=\sum_{n\in \Z}f_n(s)e^{in\phi},
    \]
    where
\[
f_n(s) := \frac1{2\pi}\int_0^{2\pi}f(s,\theta)e^{-in\theta}\,d\theta.
\]
    By the Hankel transform, we then get
    \[
    (e^{it\Delta}f)_n (r) =\frac{i^{-n}}{2it}\int_0^\infty f_n(s)J_n\left(\frac{rs}{2t}\right)e^{i\frac{r^2+s^2}{4t}}sds.
    \]
    Multiplying by $r^{\frac 13}$, we rewrite the integral as
    \[
    r^{\frac 13}(e^{it\Delta}f)_n(r) = \frac{i^{-n}}{2it} (2t)^{\frac 13} \int_0^\infty \left[ \left(\frac{rs}{2t}\right)^{\frac 13} J_n\left(\frac{rs}{2t}\right) \right] e^{i\frac{r^2+s^2}{4t}} f_n(s) s^{\frac 23} ds.
    \]
    Let $G_n(x) = |x|^{\frac 13} J_n(x)$. We recall the Landau bound \cite{Landou2000Bessel} given by
    \[
    |J_n(x)|\lesssim |x|^{-\frac 1 3}, \quad\text{uniformly for all \(x>0\) and \(n\in \N\)},
    \]
    which ensures $|G_n(x)| \lesssim 1$ uniformly for all $n \in \N$ and $x > 0$. Furthermore, from the asymptotics of the Bessel function\cite{NIST:DLMF}, there exist constants \(A_n,B_n,\alpha_n\) so that:
    \begin{itemize}
        \item As $x \to 0$, $J_n(x) \sim A_nx^n$, which implies $\lim_{x \to 0} G_n(x) = 0$ for all $n \ge 0$.
        \item As $x \to \infty$, $J_n(x) \sim B_nx^{-\frac 12} \cos(x - \alpha_n)$, which implies $\lim_{x \to \infty} G_n(x) = 0$.
    \end{itemize}
    Thus, $G_n \in C_0(\R^+)$. As a result, for fixed $t>0$ and $s > 0$, the mapping $r \mapsto \left(\frac{rs}{2t}\right)^{\frac 1 3}J_n\left(\frac{rs}{2t}\right)$  \(\in C_0(\R^+)\). Assuming  $s^{-1/3}f_n \in L^1(sds)$, the integrand for $r^{1/3}(e^{it\Delta}f)_n$ is uniformly dominated (up to a constant) by the integrable function $ |s^{-1/3}f_n(s)|s$. For any fixed $t > 0$ and $s > 0$, taking the limit as $r \to 0$ or $r \to \infty$ drives $\left(\frac{rs}{2t}\right)^{\frac 1 3}J_n\left(\frac{rs}{2t}\right) \to 0$. By the Dominated Convergence Theorem, we obtain \(r^{\frac 13}(e^{it\Delta}f)_n(r)\in C_0(\R^+)\).
    
    Applying the Landau bound to the integrand, we also recover the pointwise bound
    \[
    |r^{\frac 13} (e^{it\Delta}f)_n(r)|\lesssim |t|^{-\frac 23}\|s^{-\frac 13}f_n\|_{L^1(sds)}.
    \]
    Taking the $l^2_n$ norm on both sides gives us
    \[
    \|r^{\frac 13}(e^{it\Delta}f)_n\|_{l^2_n}\lesssim |t|^{-\frac 23}\|r^{-\frac 13}f_n\|_{l^2_nL^1(rdr)}.
    \]
    Then from Minkowski's inequality and Plancherel's Theorem, we obtain \eqref{09-15-22-57}.

    Then we use the Riesz--Thorin interpolation theorem for mixed norms \cite{Blozinski1981MultivariateRA} to interpolate with the original dispersive estimate \eqref{dispersive}, which can be written as
\[
\|e^{it\Delta} f\|_{L^\infty_{r,+}L^\infty_\theta}\lesssim |t|^{-1}\|f\|_{L^1(rdr)L^1_\theta },
\]
we get the dispersive estimate
\begin{align*}
    \|r^{\frac 1 6}e^{it\Delta}f\|_{ L^\infty_{r,+}L^4_\theta}\lesssim |t|^{-\frac 56}\| r^{-\frac 1 6}f\|_{L^1(rdr)L^{\frac 4 3}_\theta },
\end{align*}
    To conclude that \(r^{\frac 1 6}e^{it\Delta}f\in C_{0,r,+}L^4_\theta\), we denote by \(\mathcal D\) the dense subspace of \(\{f:r^{-\frac 1 6}f\in L^1(rdr)L^{\frac 4 3}_\theta\}\) such that only finitely many \(f_n\neq 0\). Then for any \(f\in \mathcal D\), \(r^{\frac 1 3}e^{it\Delta}f\in C_{0,r,+}L^4_\theta\) and the estimate \eqref{dispersive-S-C} follows from the fact that \(C_0(\R^+)\) is a closed subspace of \(L^\infty(\R^+)\). 

To prove \eqref{dispersive-S-M}, we first write \eqref{dispersive-S-C} in the form
\[
\|r^{\frac 16}e^{it\Delta}f\|_{C_{0,r,+}L^4_\theta }\lesssim |t|^{-\frac 56}\|r^{\frac 56}f\|_{ L_{r,+}^1L^{\frac 4 3}_\theta},
\]
and then observe that the dual of \(S_t:=r^{\frac 1 6}e^{it\Delta}r^{-\frac5 6}\) with respect to the pairing \(\<f,g\> = \int_0^{2\pi}\int_0^\infty \bar f g\,dr\,d\theta\) is \((S_t)^*=r^{\frac 1 6}e^{-it\Delta}r^{-\frac 5 6}\).
\end{proof}

For the remainder of this article we take \(\chi\) to be a smooth bump function supported on \((\frac 1 2,2)\) and identically \(1\) on \([\frac34,\frac54]\). Applying dispersive estimates \eqref{dispersive-S-C} and \eqref{dispersive-S-M} to \(\chi f\),
we get
\begin{align}\label{dispersive-S-chi-C}
    \|\chi e^{it\Delta}\chi f\|_{ C_{0,r,+}L^4_\theta}&\lesssim |t|^{-\frac 56}\|\chi f\|_{L^1_{r,+}L^{\frac 4 3}_\theta },
\\\label{dispersive-S-chi-M}
    \|\chi e^{it\Delta}\chi f\|_{ L^\infty_{r,+}L^4_\theta}&\lesssim  |t|^{-\frac 56}\|\chi f\|_{\M_{r,+}L^{\frac 4 3}_\theta }.
\end{align}
Moreover, with this bump function \(\chi\), it is clear that
\begin{alignat*}{3}
\|\chi f\|_{H^1(rdr)} &\approx \|\chi f\|_{H^1_r}&&=\|\chi f\|_{H^1_{r,+}},
\\
\|\chi f\|_{L^2(rdr)}&\approx \|\chi f\|_{L^2_r}&&=\|\chi f\|_{L^2_{r,+}}.
\end{alignat*}
We may then use complex interpolation to deduce that
\begin{equation}\label{rdr=dr}
    \|\chi f\|_{H^s(rdr)}\approx \|\chi f\|_{H^s_r} =\|\chi f\|_{H^s_{r,+}} \quad \text{for all }|s|\leq 1.
\end{equation}

By applying the Sobolev embedding \eqref{1-d GNinfty} and the estimate \eqref{1-d GN4} to \(\chi f\), we obtain the following anisotropic Sobolev embedding, which we will use several times in this section: for any \(\epsilon\in (0,\frac 1 4]\), we have
 \begin{align}\label{eq:anisotrophic-r}
    \|\chi f\|_{L^\infty_{r,+}L^4_\theta}&\lesssim \|\langle \p_\theta\rangle^{\frac14}\chi f\|_{L^\infty_{r,+}L^2_\theta}\\
    &\lesssim \|\langle \p_\theta\rangle^{\frac14}\chi f\|_{L^2_\theta L^\infty_{r,+}}\notag\\
    &\lesssim \| \langle \p_r \rangle^{\frac 1 2+\epsilon}\langle \p_\theta\rangle^{\frac 14}\chi f\|_{L^2_\theta L^2_{r,+}}\notag\\
    &\lesssim \|\chi f\|_{L_{r,+}^2L_\theta^2}+\|\p_r\chi f\|_{L_{r,+}^2L_\theta^2}+\|\p_\theta\chi f\|_{L_{r,+}^2L_\theta^2}\notag\\
    &\lesssim \|\chi f\|_{H^1(\R^2)},\notag
 \end{align}
 where the final inequality uses the support of \(\chi\).

By duality, we obtain
\begin{equation}\label{eq:anisotrophic-r-dual}
       \|\chi f\|_{H^{-1}(\R^2)}\lesssim \|\chi f\|_{L_r^1L_\theta^{\frac43}}\approx \|\chi f\|_{L^1(rdr)L^\frac{4}{3}_\theta}\approx \|\chi f\|_{L^1_{r,+}L^\frac{4}{3}_\theta}.
 \end{equation}

While \(H^1(\R^2)\) is not an algebra, our next Lemma proves that, when we work with radial functions supported on an annulus, we can essentially reduce matters to \(1d\) and recover algebra estimates:

\begin{lem}\label{lem:fakealgebraic}
    If \(\chi f=f\), and \(f(x)=f(|x|)=f(r)\), then
    \[
    \|fghk\|_{L^\frac 43_\theta H^{-\frac 3 4}_{r,+}}\lesssim\|f\|_{H^{-\frac 3 4}_{r,+}}\|g\|_{H^1(\R^2)}\|h\|_{H^1(\R^2)}\|k\|_{H^1(\R^2)}
    \]
\end{lem}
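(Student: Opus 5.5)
The plan is to reduce to a one-dimensional duality estimate in \(r\), carried out for each fixed \(\theta\), and then take the \(L^{\frac43}_\theta\) norm. Since \(f=\chi f\) is radial, the product \(fghk\) is supported in \(r\in(\frac12,2)\). We may therefore replace \(g,h,k\) by \(\tilde\chi g,\tilde\chi h,\tilde\chi k\), where \(\tilde\chi\) is a slightly wider bump equal to \(1\) on \(\operatorname{supp}\chi\). By \eqref{eq:anisotrophic-r} applied with \(\tilde\chi\) (the same proof works), these localized functions are controlled in \(H^1(\R^2)\), and hence in the mixed norms used below.

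Fix \(\theta\). By \eqref{eq:dualalgebraic1d} with \(M=\R^+\) and \(s=\frac34>\frac12\), we have
\[
\|f\,(ghk)(\cdot,\theta)\|_{H^{-\frac34}_{r,+}}\lesssim \|f\|_{H^{-\frac34}_{r,+}}\|\tilde\chi^3 ghk(\cdot,\theta)\|_{H^{\frac34}_{r,+}}.
\]
We could also use the \(H^1_{r,+}\) norm on the right, but \(\frac34\) costs less. The algebra property of \(H^{\frac34}_{r,+}\) alone is not enough, because we must end in \(L^{\frac43}_\theta\). The fractional Leibniz rule in \(r\) gives
\[
\|uvw\|_{H^{\frac34}_r}\lesssim \|u\|_{H^{\frac34}_r}\|v\|_{L^\infty_r}\|w\|_{L^\infty_r}+\text{permutations}.
\]
Taking \(L^{\frac43}_\theta\) and applying H\"older with exponents \(\frac14+\frac14+\frac14=\frac34\) reduces the claim to two bounds for localized \(H^1(\R^2)\) functions \(u\):
\[
\|\tilde\chi u\|_{L^4_\theta H^{\frac34}_{r,+}}\lesssim\|u\|_{H^1(\R^2)} \quad\text{and}\quad \|\tilde\chi u\|_{L^4_\theta L^\infty_{r,+}}\lesssim\|u\|_{H^1(\R^2)}.
\]

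For the second bound, \eqref{1-d GNinfty} in \(r\) followed by Minkowski gives
\[
\|\tilde\chi u\|_{L^4_\theta L^\infty_r}\lesssim \|\langle\p_r\rangle^{\frac12+\epsilon}\tilde\chi u\|_{L^4_\theta L^2_r}\lesssim\|\langle\p_r\rangle^{\frac12+\epsilon}\tilde\chi u\|_{L^2_rL^4_\theta}.
\]
Then \eqref{1-d GN4} in \(\theta\) bounds this by \(\|\langle\p_r\rangle^{\frac12+\epsilon}\langle\p_\theta\rangle^{\frac14}\tilde\chi u\|_{L^2}\). Since \(\frac12+\epsilon+\frac14\le1\), the mixed-derivative bound \(\langle a\rangle^\alpha\langle b\rangle^\beta\lesssim\langle a\rangle+\langle b\rangle\) for \(\alpha+\beta\le1\) controls this by the \(H^1\) norm, exactly as in the chain in \eqref{eq:anisotrophic-r}. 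The first bound is identical, with weights \(\frac34\) in \(r\) and \(\frac14\) in \(\theta\), so that \(\frac34+\frac14=1\).

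The main obstacle is the H\"older bookkeeping. We must check that \(H^{\frac34}_{r,+}\) and the zero-extension commute with the cutoff, and that the fractional Leibniz rule on \(\R^+\) is legitimate. I would avoid working on the half-line by extending \(\tilde\chi u\) by zero, which is harmless because \(\tilde\chi\) vanishes near \(r=0\), and then applying the Leibniz rule on \(\R\). If this becomes too delicate, the fallback is to use \(H^1_{r,+}\) for the product, where the Leibniz rule is elementary. That bound is
\[
\|uvw\|_{H^1_r}\lesssim \|\p_r u\|_{L^2_r}\|v\|_{L^\infty_r}\|w\|_{L^\infty_r}+\dots,
\]
and it needs \(\|\tilde\chi u\|_{L^4_\theta H^1_r}\). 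That norm costs \(1+\frac14\) derivatives, which is too many, so the \(\frac34\) choice is essential. I would therefore use \eqref{eq:dualalgebraic1d} with \(s=\frac34\) and the fractional Leibniz rule as the main route.
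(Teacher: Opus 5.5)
Your proposal is correct and follows essentially the paper's route. Both arguments use a one-dimensional product/duality estimate in \(r\) at each fixed \(\theta\), H\"older in \(\theta\) with \(L^4_\theta\) factors, and the anisotropic bound \(\|\tilde\chi u\|_{L^4_\theta H^{3/4}_{r,+}}\lesssim\|u\|_{H^1(\R^2)}\); the paper merely pairs against a test function \(\phi\in L^4_\theta H^{3/4}_{r,+}\) and applies the algebra property of \(H^{3/4}_{r,+}\) to all four factors.

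Contrary to your remark, the algebra property alone does suffice. Pointwise in \(\theta\) it gives
\[
\|\tilde\chi^3 ghk\|_{H^{3/4}_{r,+}}\lesssim\|\tilde\chi g\|_{H^{3/4}_{r,+}}\|\tilde\chi h\|_{H^{3/4}_{r,+}}\|\tilde\chi k\|_{H^{3/4}_{r,+}},
\]
and H\"older with \(\frac14+\frac14+\frac14=\frac34\) lands exactly in \(L^{4/3}_\theta\). So your Kato--Ponce step and the separate \(L^4_\theta L^\infty_{r,+}\) bound are unnecessary, though harmless.
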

\begin{proof}
    We proceed by duality. Let \(\phi\) be a test function and let \(\tilde\chi \chi=\chi\) where \(\tilde\chi\) is a  smooth bump function on \((\frac 1 4,4)\). Then, by \eqref{rdr=dr}, \eqref{eq:algebraic}, and \eqref{eq:anisotrophic-r}, we have
    \begin{align*}
        \int_{\R^2} fghk\phi dx&=\int_0^{2\pi}\int_0^\infty(\chi f)(\tilde\chi g)(\tilde\chi h)(\tilde\chi k)\phi\,r\,dr\,d\theta\\
        &\lesssim\|\chi f\|_{H^{-\frac 3 4}(rdr)}\|(\tilde\chi g)(\tilde\chi h)(\tilde\chi k) \phi\|_{L^1_\theta H^\frac 3 4(rdr) }\\
        &\approx \|\chi f\|_{H^{-\frac 3 4}_{r,+}}\|(\tilde\chi g)(\tilde\chi h)(\tilde\chi k)\phi\|_{L^1_\theta H^\frac 3 4_{r,+} }\\
        &\lesssim \|\chi f\|_{H^{-\frac 3 4}_{r,+}}\|\tilde\chi g\|_{L^4_\theta H^\frac 3 4_{r,+} }\|\tilde\chi h\|_{L^4_\theta H^\frac 3 4_{r,+}}\|\tilde\chi k\|_{L^4_\theta H^\frac 3 4_{r,+} }\|\phi\|_{L^4_\theta H^\frac 3 4_{r,+}}\\
        &\lesssim\|f\|_{H^{-\frac 3 4}_{r,+}}\|g\|_{H^1(\R^2)}\|h\|_{H^1(\R^2)}\|k\|_{H^1(\R^2)}\| \phi\|_{L^4_\theta H^\frac 3 4_{r,+}}.
    \end{align*}
    The desired estimate now follows by duality.
\end{proof}

The following Lemma bridges the gap between the spaces \(L^2_\theta H^{s}_{r,+}\) and \(H^s(\R^2)\) provided that \(f\) is localized around \(r=1\) and contains only low angular frequencies.
\begin{lem}\label{mixed-full}
    If \(f=P_{\leq N}^\theta f\) and \(\chi f=f\), then for \(0\leq s\leq 1\) we have
    \[
     \|f\|_{L^2_\theta H^{s}_{r,+}} \lesssim \|f\|_{H^{s}(\mathbb{R}^2)} \lesssim N^{s}\|f\|_{L^2_\theta H^{s}_{r,+}},
    \]
    and by duality, for \(-1\leq s<0\), we have
    \[
     N^{s}\|f\|_{L^2_\theta H^{s}_{r,+}} \lesssim \|f\|_{H^{s}(\mathbb{R}^2)} \lesssim \|f\|_{L^2_\theta H^{s}_{r,+}}.
    \]
\end{lem}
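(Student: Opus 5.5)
We will prove the two inequalities for \(s=0\) and \(s=1\) and obtain the range \(0<s<1\) by complex interpolation. The negative range \(-1\leq s<0\) then follows by duality.

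\emph{The endpoints \(s=0\) and \(s=1\).} At \(s=0\) both inequalities are equivalences. Indeed, since \(\chi f=f\), the weight \(r\) in \(dx=r\,dr\,d\theta\) is comparable to \(1\) on the support, so \(\|f\|_{L^2(\R^2)}\approx\|f\|_{L^2_\theta L^2_{r,+}}\). At \(s=1\) we use the polar form of the gradient, \(|\nabla f|^2=|\p_r f|^2+r^{-2}|\p_\theta f|^2\). Since \(r\approx1\) on \(\operatorname{supp}\chi\),
\[
\|f\|_{H^1(\R^2)}^2\approx \|f\|_{L^2_\theta L^2_{r,+}}^2+\|\p_r f\|_{L^2_\theta L^2_{r,+}}^2+\|\p_\theta f\|_{L^2_\theta L^2_{r,+}}^2 .
\]
The first two terms give \(\|f\|_{L^2_\theta H^1_{r,+}}^2\), which proves the lower bound. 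For the upper bound, the hypothesis \(f=P^\theta_{\leq N}f\) means only Fourier modes \(|n|\leq N\) are present. Plancherel in \(\theta\) then gives \(\|\p_\theta f\|_{L^2_\theta L^2_{r,+}}\leq N\|f\|_{L^2_\theta L^2_{r,+}}\), and hence \(\|f\|_{H^1(\R^2)}\lesssim N\|f\|_{L^2_\theta H^1_{r,+}}\).

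\emph{Interpolation for \(0<s<1\).} This step needs care, because the class of functions with \(\chi f=f\) and \(f=P^\theta_{\leq N}f\) is not itself a complex interpolation scale. To handle this, we introduce the operator \(T f:=\tilde\chi P^\theta_{\leq N}f\), where \(\tilde\chi\) is a bump with \(\tilde\chi\chi=\chi\). Both \(P^\theta_{\leq N}\) and multiplication by the radial function \(\tilde\chi\) commute with rotations, and \(P^\theta_{\leq N}\) preserves radial support, so \(Tf=f\) for every \(f\) in our class.

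\(T\) is bounded from \(L^2_\theta H^s_{r,+}\) to \(H^s(\R^2)\) with norm \(\lesssim N^s\) at \(s=0,1\), by the endpoint argument above. Interpolation then gives norm \(\lesssim N^s\) for all \(0\leq s\leq1\). Here we use that \(L^2_\theta H^s_{r,+}\) is the complex interpolation space \([L^2_\theta L^2_{r,+},L^2_\theta H^1_{r,+}]_s\), as a Hilbert-valued Sobolev space.

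Likewise, the operator \(f\mapsto \tilde\chi P^\theta_{\leq N} f\), composed with the cutoff \(\tilde\chi\), is bounded from \(H^s(\R^2)\) to \(L^2_\theta H^s_{r,+}\) uniformly at \(s=0,1\), and therefore for all \(0\leq s\leq1\). The angular projection is harmless at \(s=1\), since it commutes with \(\p_r\) and is bounded on \(L^2_\theta\).

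\emph{Duality for \(-1\leq s<0\).} Pair \(f\) against a test function \(\phi\) and replace \(\phi\) by \(\tilde\chi P^\theta_{\leq N}\phi\); this is allowed because the pairing is self-adjoint on the support of \(f\) and only low modes interact. Since \(r\,dr\approx dr\) on the support, the two pairings agree up to harmless weights. The positive-\(s\) bounds applied to the projected test function then give
\[
\|f\|_{H^{s}}\lesssim\|f\|_{L^2_\theta H^{s}_{r,+}}
\qquad\text{and}\qquad
N^{s}\|f\|_{L^2_\theta H^{s}_{r,+}}\lesssim\|f\|_{H^s}.
\]
The weight \(r\) that appears in the pairing is absorbed into the cutoff, as in \eqref{rdr=dr}.

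The main obstacle is justifying interpolation and duality on the constrained subspace. This is exactly why we route everything through the bounded operator \(\tilde\chi P^\theta_{\leq N}\), which acts as a retraction onto the class.
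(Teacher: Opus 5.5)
Your proposal is correct and follows the paper's proof: the endpoints $s=0,1$ via the polar form of the gradient and Plancherel in $\theta$, complex interpolation for $0<s<1$, and duality for $-1\le s<0$. The one addition is routing the interpolation and duality through the retraction $\tilde\chi P^\theta_{\le N}$, with the weights $r^{\pm1}$ absorbed into the cutoff; this supplies the justification the paper leaves implicit when it interpolates on the constrained class.
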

\begin{proof}
    We first consider the case \(s=1\).
    We note that
    \[
    \|f\|_{L^2_\theta H^{1}_{r,+}}\approx \|f\|_{L^2(\R^2)}+\|\p_rf\|_{L^2(\R^2)},
    \]
    and since \(\chi f = f\) and \(f = P_{\leq N}^\theta f\),
    \begin{align*}
        \|f\|_{H^1(\mathbb{R}^2)}^2
        &\approx\|f\|_{L^2(\R^2)}^2 + \|\partial_r f\|_{L^2(\R^2)}^2 + \left\| \partial_\theta f \right\|_{L^2(\R^2)}^2\lesssim N^2\|f\|_{L^2(\R^2)}^2 + \|\partial_r f\|_{L^2(\R^2)}^2.
    \end{align*}
    As a result,
    \[
     \|f\|_{L^2_\theta H^{1}_{r,+}} \lesssim \|f\|_{H^{1}(\mathbb{R}^2)} \lesssim N\|f\|_{L^2_\theta H^{s}_{r,+}}.
    \]

    The trivial case \(s=0\) gives us
    \[
    \|f\|_{L^2_\theta L^2_{r,+}} = \|f\|_{L^2(\mathbb{R}^2)},
    \]
   and desired estimates for \(s \in (0, 1)\) are concluded using complex interpolation theorem. The range \(-1 \leq s < 0\) then follows by duality.
\end{proof}

With all these preparations in hand, we can go ahead and prove our well-posedness result. Again, we start by proving a Lipschitz estimate for the solution map of \eqref{nls-S}, which yields uniqueness as an immediate corollary:

\begin{prop}\label{p:10-15-2}
    Suppose that for some \(T>0\) there are two solutions \(\psi,\phi\in C([-T,T];H_w^1(\R^2))\) of the Duhamel formulation \eqref{eq:Dulm} of \eqref{nls-S} so that \(\chi\psi,\chi\phi\in C([-T,T];L_{r,+}^\infty L_\theta^4)\). Then for all \(|t|\leq T\) we have the estimate
    \begin{equation}\label{eq:Gronwell-S}
        \|\psi(t)-\phi(t)\|_{H^{-1}(\R^2)} \lesssim (1+K|t|e^{CK^{12}t^2}) \|\psi(0)-\phi(0)\|_{H^1(\R^2)},
    \end{equation}
    where \(C>0\) is some absolute constant and (using \eqref{eq:anisotrophic-r})
    \[
    K:=\sup_{|t|\leq T}\Bigl\{\|\chi\psi(s)\|_{ L^\infty_{r,+}L^4_\theta}^2+\|\chi\phi(s)\|^2_{ L^\infty_{r,+}L^4_\theta}\Bigr\}\lesssim \sup_{|t|\leq T}\Bigl\{\|\psi(s)\|_{H^1(\R^2)}^2+\|\phi(s)\|^2_{H^1(\R^2)}\Bigr\}.
    \]
\end{prop}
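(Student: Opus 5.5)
We follow the scheme of Proposition~\ref{K-estimate}, replacing the Cartesian anisotropic dispersive estimate by the weighted polar estimates \eqref{dispersive-S-chi-C}--\eqref{dispersive-S-chi-M}. Fix \(0<t\leq T\); negative times are handled symmetrically. Write \(w:=\psi-\phi\). By the Duhamel formula,
\[
w(t)=e^{it\Delta}w(0)-2i\mu\int_0^t e^{i(t-s)\Delta}\bigl[\delta_{\bbS^1}\bigl(|\psi|^2\psi-|\phi|^2\phi\bigr)(s)\bigr]\,ds .
\]
Since \(\chi\equiv1\) near \(r=1\), we have \(\delta_{\bbS^1}=\chi\,\delta_{\bbS^1}\). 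In polar coordinates \(\delta_{\bbS^1}\) is the measure \(\delta_{r=1}\,dr\) in \(r\) times \(d\theta\), and the factor \(r^{5/6}\) equals \(1\) at \(r=1\).

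The first step is a Gronwall-type bound for \(\|\chi w(t)\|_{L^\infty_{r,+}L^4_\theta}\). We multiply the Duhamel formula by \(\chi\). For the linear term, \eqref{eq:anisotrophic-r} together with unitarity gives
\[
\|\chi e^{it\Delta}w(0)\|_{L^\infty_{r,+}L^4_\theta}\lesssim\|w(0)\|_{H^1(\R^2)}.
\]
For the nonlinear term, we apply \eqref{dispersive-S-chi-M}, which is where the weighted estimate of Lemma~\ref{l:dispersive estimate in section 4} enters. Together with H\"older in \(\theta\) on the circle, this gives
\[
\bigl\|\chi\,\delta_{\bbS^1}\bigl(|\psi|^2\psi-|\phi|^2\phi\bigr)\bigr\|_{\M_{r,+}L^{4/3}_\theta}\lesssim\bigl(\|\psi(1,\cdot)\|_{L^4_\theta}^2+\|\phi(1,\cdot)\|_{L^4_\theta}^2\bigr)\|w(1,\cdot)\|_{L^4_\theta}\lesssim K\,\|\chi w\|_{L^\infty_{r,+}L^4_\theta}.
\]
It follows that
\[
\|\chi w(t)\|_{L^\infty_{r,+}L^4_\theta}\lesssim\|w(0)\|_{H^1}+K\int_0^t|t-s|^{-5/6}\|\chi w(s)\|_{L^\infty_{r,+}L^4_\theta}\,ds .
\]
The kernel \(|t-s|^{-5/6}\) is integrable but lies only in \(L^{p'}\) with \(p'<6/5\), so we use H\"older with exponents \(p'=12/11\) and \(p=12\). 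This yields
\[
\|\chi w(t)\|^{12}_{L^\infty_{r,+}L^4_\theta}\lesssim\|w(0)\|_{H^1}^{12}+K^{12}\,t\int_0^t\|\chi w(s)\|^{12}_{L^\infty_{r,+}L^4_\theta}\,ds .
\]
This accounts for the \(K^{12}\) in \eqref{eq:Gronwell-S}. The integral Gronwall inequality then gives
\[
\|\chi w(t)\|_{L^\infty_{r,+}L^4_\theta}\lesssim e^{CK^{12}t^2}\|w(0)\|_{H^1}.
\]

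The second step passes to \(H^{-1}\). For a test function \(f\in H^1(\R^2)\), we have
\[
\bigl|\langle\delta_{\bbS^1}(|\psi|^2\psi-|\phi|^2\phi),f\rangle\bigr|\lesssim K\,\|\chi w\|_{L^\infty_{r,+}L^4_\theta}\,\|\chi f\|_{L^\infty_{r,+}L^4_\theta}\lesssim K\,\|\chi w\|_{L^\infty_{r,+}L^4_\theta}\,\|f\|_{H^1},
\]
again by \eqref{eq:anisotrophic-r}; equivalently one may use \eqref{eq:anisotrophic-r-dual}. By duality the nonlinearity is bounded in \(H^{-1}\) by \(K\|\chi w\|_{L^\infty_{r,+}L^4_\theta}\). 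Since \(e^{it\Delta}\) is unitary on \(H^{-1}\), the Duhamel formula gives
\[
\|w(t)\|_{H^{-1}}\lesssim\|w(0)\|_{H^1}+K\int_0^t\|\chi w(s)\|_{L^\infty_{r,+}L^4_\theta}\,ds\lesssim\bigl(1+Kt\,e^{CK^{12}t^2}\bigr)\|w(0)\|_{H^1},
\]
which is \eqref{eq:Gronwell-S}. The bound on \(K\) in terms of the \(H^1\) norms comes directly from \eqref{eq:anisotrophic-r}.

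The main obstacle is justifying that \eqref{dispersive-S-chi-M} applies to the singular forcing \(\delta_{\bbS^1}F\) with \(F\in L^{4/3}_\theta\). One must check that this forcing is a genuine element of \(\M_{r,+}L^{4/3}_\theta\) with the stated norm, namely \(\|F(1,\cdot)\|_{L^{4/3}_\theta}\) up to the harmless weight \(r^{5/6}=1\). One must also check that the Duhamel integral makes sense as a Bochner integral in \(L^\infty_{r,+}L^4_\theta\). Beyond this, one needs the measurability in \(s\) of \(\|\chi w(s)\|\), which follows from the assumed continuity \(\chi\psi,\chi\phi\in C([-T,T];L^\infty_{r,+}L^4_\theta)\).
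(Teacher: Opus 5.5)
Your proposal is correct and follows the paper's proof essentially step for step. Like the paper, you first prove the $\chi$-localized Gronwall bound using \eqref{eq:anisotrophic-r} and \eqref{dispersive-S-chi-M}, with H\"older exponents $12/11$ and $12$ producing the $K^{12}$. You then bound the surface nonlinearity in $H^{-1}(\R^2)$ by duality and integrate the Duhamel formula.
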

\begin{proof} 
By the Duhamel formula, we have
    \[
    \psi(t)-\phi(t)=e^{it\Delta}(\psi(0)-\phi(0))-i2\mu\int_0^t e^{i(t-s)\Delta}\delta_{r=1}\left[|\psi(s)|^2\psi(s)-|\phi(s)|^2\phi(s)\right]ds
    \]
    Then, for \(t>0\) we estimate
    \[
    \|\chi(\psi(t)-\phi(t))\|_{L^\infty_{r,+}L^4_\theta }\lesssim \|\chi e^{it\Delta}(\psi(0)-\phi(0))\|_{L^\infty_{r,+}L^4_\theta } +\left\|\int_0^t \chi e^{i(t-s)\Delta}\chi\delta_{r=1}\left[|\psi(s)|^2\psi(s)-|\phi(s)|^2\phi(s)\right]ds\right\|_{L^\infty_{r,+}L^4_\theta }
    \]
    For the first term, we apply \eqref{eq:anisotrophic-r} to bound
    \begin{align*}
        \|\chi e^{it\Delta}(\psi(0)-\phi(0))\|_{L^\infty_{r,+}L^4_\theta }
        \lesssim \|e^{it\Delta}(\psi(0)-\phi(0))\|_{H^1(\R^2)}\lesssim \|\psi(0)-\phi(0)\|_{H^1(\R^2)}
    \end{align*}
    
    For the second term, we apply \eqref{dispersive-S-chi-M} to obtain
    \begin{align*}
        &\left\|\int_0^t \chi e^{i(t-s)\Delta}\chi \delta_{r=1}\left[|\psi(s)|^2\psi(s)-|\phi(s)|^2\phi(s)\right]ds\right\|_{L^\infty_{r,+}L^4_\theta }\\
        &\quad \lesssim  \int_0^t|t-s|^{-\frac 56}\|\chi^3 \delta_{r=1}\left[|\psi(s)|^2\psi(s)-|\phi(s)|^2\phi(s)\right]\|_{\M_{r,+}L^\frac{4}{3}_\theta }ds\\
        &\quad \lesssim K\int_0^t|t-s|^{-\frac 56}\|\chi(\psi(s)-\phi(s))\|_{L^\infty_{r,+}L^4_\theta }ds\\
        &\quad \lesssim K\left(\int_0^t|t-s|^{-\frac {10} {11}}ds\right)^\frac{11}{12}\left(\int_0^t\|\chi(\psi(s)-\phi(s))\|_{L^\infty_{r,+}L^4_\theta }^{12}ds\right)^\frac {1}{12}\\
         &\quad \lesssim Kt^{\frac 1{12}}\left(\int_0^t\|\chi(\psi(s)-\phi(s))\|_{L^\infty_{r,+}L^4_\theta }^{12}ds\right)^\frac {1}{12},
    \end{align*}
    where we have used that \(\chi(1) = 1 = \chi(1)^3\).
     Taking the 12\textsuperscript{th} power gives us
     \[
     \|\chi(\psi(t)-\phi(t))\|_{L^\infty_{r,+}L^4_\theta }^{12}\lesssim\|\psi(0)-\phi(0)\|_{H^1(\R^2)}^{12}+K^{12}t\int_0^t\|\chi(\psi(s)-\phi(s))\|^{12}_{L^\infty_{r,+}L^4_\theta }\,ds.
     \]
     Applying the integral form of Gronwall's inequality, we then have
    \[
      \|\chi(\psi(t)-\phi(t))\|_{L^\infty_{r,+}L^4_\theta }\lesssim e^{CK^{12}t^2}\|\psi(0)-\phi(0)\|_{H^1(\R^2)}.
     \]
     Moreover, for any test function \(f\in H^1(\R^2)\) we have
    \begin{align*}
        \langle\delta_{r=1}\left(|\psi|^2\psi-|\phi|^2\phi\right),f\rangle&=\int_0^{2\pi}\overline{\left(|\psi(1,\theta)|^2\psi(1,\theta)-|\phi(1,\theta)|^2\phi(1,\theta)\right)}f(1,\theta)d\theta\\
        &\lesssim (\|\psi(1,\theta)\|_{L^4_\theta}^2+\|\phi(1,\theta)\|_{L^4_\theta}^2)\|\psi(1,\theta)-\phi(1,\theta)\|_{L_\theta^4}\|f(1,\theta)\|_{L^4_\theta}\\
        &\lesssim K\|\chi(\psi-\phi)\|_{L^\infty_{r,+}L^4_\theta}\|\chi f\|_{L^\infty_{r,+}L^4_\theta}\\
        &\lesssim K\|\chi(\psi-\phi)\|_{L^\infty_{r,+}L^4_\theta}\|f\|_{H^1(\R^2)}.
    \end{align*}
    By duality, we conclude that
    \[
    \bigl\|\delta_{r=1}\left(|\psi|^2\psi-|\phi|^2\phi\right)\bigr\|_{H^{-1}(\R^2)}\lesssim K \|\chi(\psi-\phi)\|_{L^\infty_{r,+}L^4_\theta}.
    \]
     As a consequence,
    \begin{align*}
        \|\psi(t)-\phi(t)\|_{H^{-1}(\R^2)}&\lesssim \|\psi(0)-\phi(0)\|_{H^{-1}(\R^2)}+\int_0^t\|\delta_{r=1}(|\psi(s)|^2\psi(s)-|\phi(s)|^2\phi(s))\|_{H^{-1}(\R^2)}ds\\
        &\lesssim \|\psi(0)-\phi(0)\|_{H^{-1}(\R^2)}+K\int_0^t\|\chi(\psi(s)-\phi(s))\|_{L^\infty_rL^4_\theta }ds\\
        &\lesssim (1+Kte^{CK^{12}t^2}) \|\psi(0)-\phi(0)\|_{H^1(\R^2)},
    \end{align*}
    which completes the proof of \eqref{eq:Gronwell-S}.
\end{proof}

Next, we turn to the problem of existence. We recall that the ambient mollifier \(\rho\) in \eqref{mollification} is radial and hence the mollification \(\rho_{\bbS^1}^\epsilon\) is  also radial. We use the same notation \(\rho^\epsilon_{\bbS^1}(r)\) for its radial profile.

We now consider the regularized problem
\[i\partial_t\psi^\epsilon+\Delta\psi^\epsilon
=2\mu\rho^\epsilon_{\bbS^1}(r)|\psi^\epsilon|^2\psi^\epsilon,\qquad r=|x|,
\] which we recall is globally well-posed in the defocusing case or in the focusing case when \(\|\psi_0\|_{H^1(\R^2)}\) is sufficiently small.

We wish to prove that \(\psi^\epsilon\) converges as \(\epsilon \to 0\), which will give us existence of local-in-time solutions to \eqref{nls-S}.

An identical argument to the line establishes the analog of \eqref{V4estimate}: taking \(V_\epsilon\) to be defined as in \eqref{V-epsilon definition} with \(\Sigma = \bbS^1\) we have
\[
0\leq V_\epsilon(\psi^\epsilon)
\lesssim \|u\|_{H^1(\R^2)}^4,
\]
uniformly in \(\epsilon\).
Lemma~\ref{lem:uniform-regularized-control} therefore gives
global regularized solutions and the uniform bound
\eq{oct 13}{
A: = \sup_{0<\epsilon\leq\frac14}\sup_{t\in\R}
\|\psi^\epsilon(t)\|_{H^1(\R^2)}<\infty
}
in the defocusing case and in the focusing case for
sufficiently small initial data.

\begin{prop}\label{GlobalS1}
    Given \(\psi_0\in H^1(\R^2)\), there exists \(T=T(\|\psi_0\|_{H^1(\R^2)})>0\) and a solution in \(C([-T,T];H^1(\R^2))\) to the Duhamel formulation \eqref{eq:Dulm} of \eqref{nls-S} in the defocusing case, or in the focusing case when \(\|\psi_0\|_{H^1(\R^2)}\) is sufficiently small . Moreover, if \(\psi^\epsilon\) is a family of solutions to \eqref{nls-epsilon} with \(\psi^\epsilon(0)=\psi_0\in H^1(\R^2)\), then
    \[
    \lim_{\epsilon\to 0}\sup_{|t|\leq T}\|\psi^{\epsilon}(t)-\psi(t)\|_{H^1(\R^2)}=0.
    \]
\end{prop}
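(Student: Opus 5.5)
We will follow the structure of the proof of Proposition~\ref{p:10-15} for the line, replacing the Cartesian pair \((x,y)\) by the polar pair \((\theta,r)\) and localizing with the cutoff \(\chi\). Fix \(T>0\) to be chosen, let \(A\) be as in \eqref{oct 13}, and for \(0<\epsilon<\epsilon'\leq\frac14\) write
\[
\psi^\epsilon(t)-\psi^{\epsilon'}(t)=-i2\mu\int_0^te^{i(t-s)\Delta}(F_1+F_2)\,ds,
\]
with \(F_1=\rho^\epsilon(|\psi^\epsilon|^2\psi^\epsilon-|\psi^{\epsilon'}|^2\psi^{\epsilon'})\) and \(F_2=(\rho^\epsilon-\rho^{\epsilon'})|\psi^{\epsilon'}|^2\psi^{\epsilon'}\). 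Since \(\epsilon\leq\frac14\), both \(\rho^\epsilon\) and \(\rho^{\epsilon'}\) are supported in \(\{\frac34\le r\le\frac54\}\), so \(F_j=\chi F_j\).

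\textbf{Step 1: Cauchy in \(\chi\cdot L^\infty_{r,+}L^4_\theta\).} We split \(\chi(\psi^\epsilon-\psi^{\epsilon'})\) into \(P^r_{>N}E\) and \(P^r_{\le N}E\) pieces.

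The high radial frequency piece is bounded by \(N^{-1/4}A\). This follows from \eqref{bernie up r} combined with the chain in \eqref{eq:anisotrophic-r}, using \(\langle\p_r\rangle^{3/4}\langle\p_\theta\rangle^{1/4}\) controlled by \(H^1\) on the support of \(\chi\).

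The \(F_1\) contribution is handled by \eqref{dispersive-S-chi-M}. Together with \(\|\rho^\epsilon\|_{L^1_r}\lesssim1\), this gives the bound \(T^{1/6}A^2\sup_t\|\chi(\psi^\epsilon-\psi^{\epsilon'})\|_{L^\infty_{r,+}L^4_\theta}\), which can be absorbed once \(TA^{12}\ll1\).

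For the \(F_2\) contribution we mimic the line argument, with \(\theta\) playing the role of \(x\) and \(r\) that of \(y\):
\begin{itemize}
\item Estimate \(\chi\int e^{i(t-s)\Delta}F_2\) in \(L^4_\theta H^{-3/4}_{r,+}\), then pass to \(L^\infty_r\) at low radial frequency via \eqref{bernie down r}, losing a power of \(N\).
\item Bound \(F_2\) in \(L^{4/3}_\theta H^{-3/4}_{r,+}\) using Lemma~\ref{lem:fakealgebraic} with \(f=\rho^\epsilon-\rho^{\epsilon'}\), together with the radial analogue of Lemma~\ref{l:rho-diff}, namely \(\|\rho^\epsilon-\rho^{\epsilon'}\|_{H^{-3/4}_{r,+}}\lesssim|\epsilon-\epsilon'|^{1/4}\). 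This analogue is proved the same way, since near \(r=1\) the profile is a 1d mollified delta up to a smooth Jacobian.
\end{itemize}

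\textbf{Main obstacle.} The propagator does not factor in \((r,\theta)\), so we cannot use \(e^{it\p_x^2}\) as on the line. The plan is to avoid the mixed estimate and work in \(H^{-s}(\R^2)\) instead. Decompose \(F_2\) further with \(P^\theta_{\le M}\) and \(P^\theta_{>M}\).
\begin{itemize}
\item For the low angular part, use Lemma~\ref{mixed-full} to convert \(L^2_\theta H^{-3/4}_{r,+}\) bounds into \(H^{-3/4}(\R^2)\) bounds. Unitarity of \(e^{it\Delta}\) then gives \(T|\epsilon-\epsilon'|^{1/4}A^3\).
\item Bring this back to \(\chi\,L^\infty_rL^4_\theta\) at radial frequency \(\le N\) and angular frequency \(\le M\), via Bernstein (\eqref{bernie down r}, \eqref{bernie down theta}) and Lemma~\ref{mixed-full}, at the cost of \(N^{C}M^{C}\).
\item Handle the high angular part of \(\psi^\epsilon-\psi^{\epsilon'}\) directly by \eqref{bernie up theta}, which costs only \(M^{-c}A\).
\end{itemize}
Sending \(\epsilon,\epsilon'\to0\) and then \(N,M\to\infty\) yields the Cauchy property, and hence a limit \(\psi\) with \(\chi\psi\in C([-T,T];L^\infty_{r,+}L^4_\theta)\).

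\textbf{Step 2: Cauchy in \(H^{-2}\) and weak-\(H^1\) convergence.} Bound the \(F_1\) term in \(H^{-1}\) using \eqref{eq:anisotrophic-r-dual}. Bound the \(F_2\) term in \(H^{-2}\) by duality: pair with \(\chi g\), \(g\in H^2\), and use the \(H^{-1}_r\) estimate on \(\rho^\epsilon-\rho^{\epsilon'}\) against the \(H^1_r\) norm of \(\chi^{3}|\psi|^2\psi\,g\). This is controlled via \eqref{1-d GNinfty} and \eqref{eq:anisotrophic-r}, giving \(T|\epsilon-\epsilon'|^{1/2}A^3\). The frequency-truncation argument of Proposition~\ref{p:10-15} then gives \eqref{weakconvergence}, and we pass to the limit in the Duhamel formula exactly as in \eqref{identification}, using \(\chi\)-localized \(L^4_\theta\) convergence on the circle.

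\textbf{Step 3: Strong \(H^1\) convergence.} Uniqueness follows from Proposition~\ref{p:10-15-2}. The analogue of Lemma~\ref{Conservation-limiting} for \eqref{nls-S} has the same proof, using time reversal and Proposition~\ref{p:10-15-2}, and yields mass and energy conservation. Lemma~\ref{lem:Aepsilon} for \(\bbS^1\), together with Step 1, gives convergence of \(V_\epsilon(\psi^\epsilon)\) to \(V_0(\psi)\). Hence \(\|\psi^\epsilon(t)\|_{H^1}\to\|\psi(t)\|_{H^1}\) uniformly in \(t\), and Radon--Riesz concludes.
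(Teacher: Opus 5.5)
Your proposal is correct and follows the paper's proof essentially step for step: the splitting of $\chi(\psi^\epsilon-\psi^{\epsilon'})$ into high-radial, high-angular and low--low pieces, the weighted dispersive estimate for $F_1$ with $TA^{12}\ll1$, and for $F_2$ the detour through $H^{-3/4}(\R^2)$ via Lemma~\ref{mixed-full}, unitarity, Lemma~\ref{lem:fakealgebraic} and \eqref{phodiffS}, followed by weak convergence, identification of the Duhamel limit, uniqueness, conservation and Radon--Riesz. The differences are minor: the paper takes $M=N$ and proves the Cauchy property directly in $H^{-1}(\R^2)$ (bounding $F_2$ in $L^{4/3}_\theta H^{-3/4}_{r,+}$ and using Sobolev embedding) rather than in $H^{-2}$; also, your claim that \eqref{phodiffS} follows ``the same way'' as Lemma~\ref{l:rho-diff} is too quick, since that Fourier proof needs $\rho^\epsilon$ to be an exact dilate and curvature makes the radial profile $\epsilon$-dependent, so the paper instead argues by duality, writing the pairing with a test function as an integral of $\rho(z)$ over $\bbS^1\times\R^2$ and using $H^{3/4}\hookrightarrow C^{0,1/4}$.
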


\begin{proof}
    Taking \(0<\epsilon<\epsilon'\leq \frac14\), we use the Duhamel formulation \eqref{eq:Dulm} to write
    \[
    \psi^{\epsilon}-\psi^{\epsilon'}=e^{it\Delta}(\psi^\epsilon(0)-\psi^{\epsilon'}(0))- i2\mu\int_0^t e^{i(t-s)\Delta}\left(F_1(s)-F_2(s)\right)\,ds,
    \]
    where
    \[
F_1:=\rho^{\epsilon}_{\bbS^1}\left(|\psi^{\epsilon}|^2\psi^{\epsilon}-|\psi^{\epsilon'}|^2\psi^{\epsilon'}\right)
\qtq{and}
F_2:=\left(\rho^{\epsilon}_{\bbS^1}-\rho^{\epsilon'}_{\bbS^1}\right)|\psi^{\epsilon'}|^2\psi^{\epsilon'}.
\]
Since the initial data cancels, we have
\[
\chi(\psi^{\epsilon}-\psi^{\epsilon'})=-i2\mu\int_0^t \chi e^{i(t-s)\Delta}\left(F_1(s)-F_2(s)\right)ds.
\]

First, we will show that
\begin{equation}\label{cauchy-nr}
\lim_{\epsilon,\epsilon'\to 0}\sup_{|t|\leq T}\|\chi(\psi^\epsilon-\psi^{\epsilon'})\|_{L^\infty_{r,+}L^4_\theta }=0
\end{equation}
We use our Littlewood--Paley projections, which gives us
\begin{align*}
    \sup_{|t|<T}\|\chi(\psi^\epsilon-\psi^{\epsilon'})\|_{L^\infty_{r,+}L^4_\theta }&\lesssim \sup_{|t|<T}\|P^r_{> N}E\chi(\psi^\epsilon-\psi^{\epsilon'})\|_{L^\infty_rL^4_\theta }\\&\quad +\sup_{|t|<T}\|P^\theta_{>N}P^r_{\leq N}E\chi(\psi^\epsilon-\psi^{\epsilon'})\|_{L^\infty_rL^4_\theta }+\sup_{|t|<T}\|P^\theta_{\leq N}P^r_{\leq N}E\chi(\psi^\epsilon-\psi^{\epsilon'})\|_{L^\infty_rL^4_\theta }.
\end{align*}
 For the high frequencies in \(r\), we use \eqref{bernie up r} and Sobolev embedding to bound
 \begin{align*}
     \sup_{|t|<T}\|P^r_{> N}E\chi(\psi^\epsilon-\psi^{\epsilon'})\|_{L^\infty_rL^4_\theta }\lesssim N^{-\frac 1 4}\sup_{|t|<T}\|P^r_{> N}E\chi(\psi^\epsilon-\psi^{\epsilon'})\|_{H^1(\R^2)}\lesssim N^{-\frac 1 4} A,
 \end{align*}
 where \(A\) is defined as in \eqref{oct 13}.

Similarly, using \eqref{bernie up theta} and \eqref{bernie down r}, we have
\begin{align*}
     \sup_{|t|<T}\|P^\theta_{>N}P^r_{\leq N}E\chi(\psi^\epsilon-\psi^{\epsilon'})\|_{L^\infty_rL^4_\theta }\lesssim N^{-\frac 1 4}\sup_{|t|<T}\|P^\theta _{> N}P^r_{\leq N}E\chi(\psi^\epsilon-\psi^{\epsilon'})\|_{H^1(\R^2)}\lesssim N^{-\frac 1 4} A.
 \end{align*}

For \(P^\theta_{\leq N}P^r_{\leq N}\), we decompose
\begin{align*}
    \sup_{|t|<T}\|P^\theta_{\leq N}P^r_{\leq N}E\chi(\psi^\epsilon-\psi^{\epsilon'})\|_{L^\infty_rL^4_\theta }&\lesssim\underbrace{\sup_{|t|\leq T}\|P^\theta_{\leq N}P^r_{\leq N}\int_0^tE\chi e^{i(t-s)\Delta}F_1(s)ds\|_{L^\infty_rL^4_\theta }}_{=:I}\\&\quad +\underbrace{\sup_{|t|\leq T}\|P^\theta_{\leq N}P^r_{\leq N}\int_0^tE\chi e^{i(t-s)\Delta}F_2(s)ds\|_{L^\infty_rL^4_\theta }}_{=:II}
\end{align*}

For I, we use that \(F_1 = \chi F_1\) and apply \eqref{dispersive-S-chi-C} to get
\begin{align*}
    \sup_{|t|\leq T}\|P^\theta_{\leq N}P_{\leq N}^r\int_0^tE\chi e^{i(t-s)\Delta}F_1(s)ds\|_{L^\infty_rL^4_\theta }&\lesssim \sup_{|t|\leq T}\int_0^t |t-s|^{-\frac 5 6}\|F_1(s)\|_{ L^1_{r,+}L^{\frac 4 3}_\theta}ds\\
    &\lesssim T^{\frac 1 6}\sup_{|t|\leq T}\| \rho^{\epsilon}\left(|\psi^{\epsilon}|^2\psi^{\epsilon}-|\psi^{\epsilon'}|^2\psi^{\epsilon'}\right)\|_{L^1_{r,+}L^{\frac 4 3}_\theta }\\
    &\lesssim T^\frac 1 6 A^2\sup_{|t|\leq T}\|\chi(\psi^\epsilon-\psi^{\epsilon'})\|_{L^\infty_{r,+}L^4_\theta }.
\end{align*}

For II,  taking \(f\in C_c^\infty(0,\infty)\) and recalling the definition \eqref{mollification}, for sufficiently small \(0<\epsilon,\epsilon'\leq\frac14\), we have 
\begin{align*}
\left|
\left\langle f,\rho^\epsilon-\rho^{\epsilon'}
\right\rangle_{L^2_{r,+}}
\right|&=\left|\int_{\mathbb R}
E(\rho^\epsilon-\rho^{\epsilon'})(r)
\,\overline{Ef(r)}\,dr\right|\\
&=\frac1{2\pi}\left|
\int_{\bbS^1}\int_{\mathbb R^2}\rho(z)
\left[
\frac{\overline{Ef(|y+\epsilon z|)}}{|y+\epsilon z|}
-
\frac{\overline{Ef(|y+\epsilon'z|)}}{|y+\epsilon'z|}
\right]dz\,dS(y)\right|\\
&\lesssim
\int_{\bbS^1}\int_{\mathbb R^2}\rho(z)
\left|Ef(|y+\epsilon z|)-Ef(|y+\epsilon'z|)\right|
\,dz\,dS(y) +
|\epsilon-\epsilon'|\,\|Ef\|_{L^\infty}
\int_{\mathbb R^2}\rho(z)|z|\,dz\\
&\lesssim_\rho
|\epsilon-\epsilon'|^{1/4}\|Ef\|_{H^{\frac{3}{4}}}\\
&\lesssim_\rho
|\epsilon-\epsilon'|^{1/4}\|f\|_{H^{\frac{3}{4}}_{r,+}},
\end{align*}
where the third line follows from the reverse triangle inequality \(\big||y+\epsilon z|-|y+\epsilon'z|\big|
\leq|\epsilon-\epsilon'|\,|z|\), and the penultimate line from the embedding \(H^{\frac{3}{4}}(\R)\hookrightarrow C^{0,\frac14}(\R)\). By duality, we arrive at the estimate
\begin{equation}\label{phodiffS}
\|\rho^\epsilon-\rho^{\epsilon'}\|_{H^{-\frac 3 4}_{r,+}}
\lesssim |\epsilon-\epsilon'|^{\frac 14}.
\end{equation}

We now apply this with \eqref{bernie down theta}, and Lemmas~\ref{lem:fakealgebraic} and~\ref{mixed-full}, with the fact that the linear Schr\"odinger propagator is isometry to conclude that
\begin{align*}
    \sup_{|t|\leq T}\|P^\theta_{\leq N}P_{\leq N}^r\int_0^t E\chi e^{i(t-s)\Delta}F_2(s)ds\|_{L^{4}_\theta H^{-\frac 3 4}_r}
    &\lesssim N^{\frac 1 4}\sup_{|t|\leq T}\left|\int_0^t\| P^\theta_{\leq N}E\chi e^{i(t-s)\Delta}F_2(s)\|_{L^{2}_\theta H^{-\frac 3 4}_r}\,ds\right|\\
    &\lesssim N\sup_{|t|\leq T}\left|\int_0^t\|P^\theta_{\leq N}\chi e^{i(t-s)\Delta}F_2(s)\|_{H^{-\frac 3 4}(\R^2)}\,ds\right|\\
    &\lesssim N T\sup_{|t|\leq T}\|P^\theta _{\leq N}F_2\|_{H^{-\frac 3 4}(\R^2)}\\
    &\lesssim N T\sup_{|t|\leq T}\|P^\theta _{\leq N}F_2\|_{L^{2}_\theta H^{-\frac 3 4}_{r,+}}\\
    &\lesssim N^{\frac 5 4}T\sup_{|t|\leq T}\|F_2\|_{L_\theta^{\frac 4 3}H^{-\frac 3 4}_{r,+}}\\
    &= N^{\frac 5 4}T\sup_{|t|\leq T}\|\left(\rho^{\epsilon}-\rho^{\epsilon'}\right)|\psi^{\epsilon'}|^2\psi^{\epsilon'}\|_{L^{\frac 4 3}_\theta H^{-\frac 3 4}_{r,+} }\\
    &\lesssim  N^{\frac 5 4}T\sup_{|t|\leq T}\|\rho^\epsilon-\rho^{\epsilon'}\|_{H^{-\frac 34}_{r,+}}\|\psi^{\epsilon'}\|^3_{H^1(\R^2)}\\
    &\lesssim  N^{\frac 5 4}T|\epsilon-\epsilon'|^\frac 14 A^3.
\end{align*}
Applying \eqref{bernie down r} again, we have
\[
\sup_{|t|\leq T}\|P^\theta_{\leq N}P_{\leq N}^r\int_0^tE\chi e^{i(t-s)\Delta}F_2(s)ds\|_{L^\infty_rL^4_\theta }\lesssim N^{\frac 5 2}T|\epsilon-\epsilon'|^\frac 14 A^3
\]

 Combining these estimates, we get
 \[
  \sup_{|t|<T}\|\chi(\psi^\epsilon-\psi^{\epsilon'})\|_{L^\infty_rL^4_\theta }\lesssim  N^{-\frac 1 4} A+T^\frac 1 6 A^2\sup_{|t|\leq T}\|\chi(\psi^\epsilon-\psi^{\epsilon'})\|_{L^\infty_{r,+}L^4_\theta }+N^{\frac 52}T|\epsilon-\epsilon'|^\frac 14 A^3.
 \]
 Choosing \(T\) sufficiently small to ensure that 
 \[
 TA^{12}\ll 1,
 \]
and taking \(\epsilon,\epsilon'\to 0\), we have
 \[
\limsup_{\epsilon,\epsilon'\to 0}\sup_{|t|\leq T}\|\chi(\psi^\epsilon-\psi^{\epsilon'})\|_{L^\infty_{r,+}L^4_\theta }\lesssim N^{-\frac 1 4} A.
 \]
Taking the limit as \(N\to\infty\), we see that \(\chi\psi^\epsilon\) converges in \(C([-T,T];L^\infty_{r,+}L^4_\theta )\).

Next, we will show that
 \begin{equation}\label{Cauchy H1}
      \limsup_{\epsilon,\epsilon' \to 0}\sup_{|t|\leq T}\|\psi^\epsilon-\psi^{\epsilon'}\|_{H^{-1}(\R^2)}=0.
 \end{equation}

 Using the previous decomposition, we have
 \begin{align*}
   \sup_{|t|\leq T}\|\psi^\epsilon-\psi^{\epsilon'}\|_{H^{-1}(\R^2)}&\lesssim  \sup_{|t|\leq T}\|\int_0^t e^{i(t-s)\Delta}F_1(s)ds\|_{H^{-1}(\R^2)}+ \sup_{|t|\leq T}\|\int_0^t e^{i(t-s)\Delta}F_2(s)ds\|_{H^{-1}(\R^2)}\\
   &\lesssim  \sup_{|t|\leq T}\|\int_0^t e^{i(t-s)\Delta}F_1(s)ds\|_{H^{-1}(\R^2)}+ T \sup_{|t|\leq T}\|F_2(t)\|_{L_\theta^{\frac43}H_{r,+}^{-\frac34}}\\
   &\lesssim  \underbrace{\sup_{|t|\leq T}\|\int_0^t e^{i(t-s)\Delta}F_1(s)ds\|_{H^{-1}(\R^2)}}_{:=I}+T|\epsilon-\epsilon'|^\frac 14 A^3,
 \end{align*}
 where the third inequality follows from Sobolev embedding. We then use \eqref{eq:anisotrophic-r-dual} and that \(F_1 = \chi^3 F_1\) to estimate
\begin{align*}
            I&\lesssim T\sup_{|t|\leq T}\|F_1(s)\|_{H^{-1}(\R^2)}\\
            &\lesssim T\sup_{|t|\leq T}\|\chi^3 F_1(s)\|_{ L^1_{r,+}L^{4/3}_\theta}\\
            &\lesssim T\|\rho_{\epsilon}\|_{L^1_{r,+}}\left(\sup_{|t|\leq T}\|\chi\psi^{\epsilon}(t)\|_{L^\infty_{r,+}L^4_\theta }^2+\sup_{|t|\leq T}\|\chi\psi^{\epsilon'}(t)\|_{L^\infty_{r,+}L^4_\theta }^2\right)\sup_{|t|\leq T}\|\chi\psi^{\epsilon}(t)-\chi\psi^{\epsilon'}(t)\|_{L^\infty_{r,+}L^4_\theta }\\
            &\lesssim TA^2\sup_{|t|\leq T}\|\chi\psi^{\epsilon}(t)-\chi\psi^{\epsilon'}(t)\|_{L^\infty_{r,+}L^4_\theta }.
        \end{align*}
It follows that
\[
\sup_{|t|\leq T}\|\psi^\epsilon-\psi^{\epsilon'}\|_{H^{-1}(\R^2)}\lesssim TA^2\sup_{|t|\leq T}\|\chi\psi^{\epsilon}(t)-\chi\psi^{\epsilon'}(t)\|_{L^\infty_{r,+}L^4_\theta }+T|\epsilon-\epsilon'|^\frac 14 A^3
\]
Passing the limit \(\epsilon,\epsilon'\to 0\) then proves that \(\psi^\epsilon\to\psi\) in \(C([-T,T];H^{-1}(\R^2))\).

Arguing precisely as in the proof of Proposition~\ref{p:10-15}, we may combine this convergence with the uniform \(H^1\)-bound to conclude that \(\psi^\epsilon\to \psi\) in \(C([-T,T];H_w^1(\R^2))\).  Moreover, an essentially identical argument to \eqref{identification} shows that \(\psi\) solves the Duhamel formulation \eqref{eq:Dulm} of \eqref{nls-sigma} with \(\Sigma = \bbS^1\) and is therefore unique by Proposition~\ref{p:10-15-2}.

         To complete the proof, we continue to argue as in Proposition~\ref{p:10-15} to prove norm convergence.  As before, it suffices to prove \eqref{pot energy converges}, but now with \(\Sigma=\bbS^1\).
       
        Proceeding as in \eqref{ND0}, we decompose
        \begin{equation*}
            \begin{aligned}
            \left|V_\epsilon(\psi^\epsilon(t))-V_0(\psi(t))\right|\leq
            \left| V_\epsilon(\psi^\epsilon(t))- V_0(\psi^\epsilon(t)) \right| 
            +\left|V_0(\psi^\epsilon(t)) - V_0(\psi(t))\right|.
            \end{aligned}
        \end{equation*}
        The first term once again vanishes uniformly for \(|t|\leq T\) as \(\epsilon \to 0\) by Lemma~\ref{lem:Aepsilon}. For the second term, we use that \(\chi\equiv 1\) near \(r=1\) to bound
        \[
            \begin{aligned}
            \left|V_0(\psi^\epsilon(t))-V_0(\psi(t))\right|&\lesssim
            \left( \|\psi^\epsilon(t,1,\cdot)\|_{L_\theta^4}^3+\|\psi(t,1,\cdot)\|_{L_\theta^4}^3\right)\|\psi^\epsilon(t,1,\cdot)-\psi(t,1,\cdot)\|_{L_\theta^4}\\
            &\lesssim A^3\|\chi(\psi^\epsilon-\psi)(t)\|_{L_{r,+}^\infty L_\theta^4},
            \end{aligned}
        \]
 
    which converges to zero uniformly for \(|t|\leq T\) by \eqref{cauchy-nr}. The proof of norm convergence is now completed using an identical argument to Proposition~\ref{p:10-15}, which allows us to upgrade convergence in \(C([-T,T];H_w^1(\R^2))\) to convergence in \(C([-T,T];H^1(\R^2))\) by Radon--Riesz, completing the proof.

\end{proof}
Once again, an identical bootstrap argument to Lemma~\ref{lem:uniform-regularized-control} proves the following:
\begin{cor}
    Let  \(n=2\), there exists a unique strong global solution \(\psi\in C(\R;\,H^1(\R^2))\) to \eqref{nls-S} with initial data $\psi_0\in H^1(\R^2)$ if either of the following two conditions holds:

(i)  \eqref{nls-y} is defocusing;

(ii) \(\|\psi_0\|_{H^1(\R^2)}\) is sufficiently small.

This solution conserves both the mass and the energy.

\end{cor}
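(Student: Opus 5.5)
We extend the local solution of Proposition~\ref{GlobalS1} to a global one by iterating in time. This works because the local existence time depends only on a uniform \(H^1\) bound, and conservation of mass and energy controls that bound for all times. We mirror the corollary already proved for \eqref{nls-y}.

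\emph{Step 1: an a priori bound.} By the trace theorem and Sobolev embedding (as in Lemma~\ref{lem:Aepsilon}), \(V_0(u)=\int_{\bbS^1}|u|^4\,dS\lesssim\|u\|_{H^1(\R^2)}^4\). Proposition~\ref{GlobalS1} yields a local solution, and the argument there (via Lemma~\ref{Conservation-limiting}, adapted to \(\bbS^1\) using the uniqueness statement Proposition~\ref{p:10-15-2}) gives \(M(\psi(t))=M(\psi_0)\) and \(E(\psi(t))=E(\psi_0)\) on \([-T,T]\). For any solution conserving mass and energy we have
\[
\|\psi(t)\|_{H^1}^2+\mu V_0(\psi(t))=\|\psi_0\|_{H^1}^2+\mu V_0(\psi_0).
\]
We then run the argument of Lemma~\ref{lem:uniform-regularized-control} with \(V_\epsilon\) replaced by \(V_0\).
\begin{itemize}
\item In the defocusing case this gives \(\|\psi(t)\|_{H^1}^2\lesssim\|\psi_0\|_{H^1}^2+\|\psi_0\|_{H^1}^4=:B^2\).
\item In the focusing case with \(\|\psi_0\|_{H^1}\le\delta\), a continuity (bootstrap) argument gives \(\|\psi(t)\|_{H^1}^2\le2\|\psi_0\|_{H^1}^2\). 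This uses that \(t\mapsto\|\psi(t)\|_{H^1}\) is continuous, since \(\psi\in C(I;H^1)\).
\end{itemize}
In either case the bound holds on the whole lifespan of the solution.

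\emph{Step 2: iteration.} In Proposition~\ref{GlobalS1} the time \(T\) depends only on \(A\). Here \(A\) is the uniform bound for the regularized flows from Lemma~\ref{lem:uniform-regularized-control}, which is controlled by a function of the data norm. So there is a time \(T_*=T(B)>0\) that works for every datum with \(\|\phi_0\|_{H^1}\le B\). In the focusing case we must also keep the data small enough for Lemma~\ref{lem:uniform-regularized-control} to apply. To guarantee this, we take the initial smallness threshold \(\delta\) so small that \(\sqrt2\,\delta\) is still below that lemma's threshold.

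Starting from \(\psi(kT_*)\), we solve on \([kT_*-T_*,kT_*+T_*]\). The pieces agree on their overlaps by Proposition~\ref{p:10-15-2}, after time-translating and noting that the equation is autonomous. Gluing the pieces gives \(\psi\in C(\R;H^1(\R^2))\) solving \eqref{eq:Dulm}. Indeed, the Duhamel formula on each piece, combined with the group property of \(e^{it\Delta}\), reproduces \eqref{eq:Dulm} from time \(0\). Mass and energy are conserved on each piece, hence globally, so the bound of Step 1 persists and the iteration never stops. Negative times are treated symmetrically.

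\emph{Step 3: uniqueness.} Any two global strong solutions in \(C(\R;H^1)\) satisfy, by \eqref{eq:anisotrophic-r}, that \(\chi\psi\in C_tL^\infty_{r,+}L^4_\theta\). Proposition~\ref{p:10-15-2} then applies on every compact interval and shows the two solutions agree.

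The main obstacle is Step 2's requirement that the local time be uniform. Proposition~\ref{GlobalS1} is stated with \(T\) depending on \(A\), which is defined through the regularized flows rather than directly through \(\|\psi(kT_*)\|_{H^1}\). We therefore need to check, from Lemma~\ref{lem:uniform-regularized-control}, that \(A\lesssim B+B^2\) (or \(A\le\sqrt2\,\|\phi_0\|_{H^1}\) in the focusing case) for all restarted data. In the focusing case we must also ensure that the restarted data stays below the smallness threshold, which is exactly why \(\delta\) is shrunk at the outset.
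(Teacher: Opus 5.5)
Your proposal is correct and follows the paper's approach. The paper proves this corollary by citing the bootstrap argument of Lemma~\ref{lem:uniform-regularized-control}, with $V_0$ in place of $V_\epsilon$, together with the local theory of Proposition~\ref{GlobalS1}, conservation of mass and energy, and uniqueness from Proposition~\ref{p:10-15-2}; your explicit checks (that the local time depends only on the data norm through $A$, and that the focusing threshold is shrunk so restarted data stays small) just fill in details the paper leaves implicit.
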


An inspection of the proof of Proposition~\ref{Ctsdependence} demonstrates that it applies without modification to \(\Sigma = \bbS^1\). This yields continuity of the solution map and completes the proof of Theorem~\ref{thrm:GWPy}:
    
\begin{prop}
If \(\psi_{n,0}\to\psi_0\) is a convergent sequence of \(H^1(\R^2)\) initial data with corresponding solutions \(\psi_n,\psi\in C(\R;H^1(\R^2))\), then for any time \(0<T<\infty\) we have
\[
\lim_{n\to\infty}\sup_{|t|\leq T}\|\psi_n(t)-\psi(t)\|_{H^1(\R^2)}=0
\]

\end{prop}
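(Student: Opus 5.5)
We will follow the proof of Proposition~\ref{Ctsdependence} for the line, replacing each line-specific input by its circle analogue. The first step is to obtain a uniform $H^1$ bound. By the trace theorem on $\bbS^1$ and Sobolev embedding, $V_0(u)=\int_{\bbS^1}|u|^4\,dS\lesssim\|u\|_{H^1(\R^2)}^4$. Conservation of mass and energy for $\psi_n$ and $\psi$, from the preceding corollary, then lets us run the argument of Lemma~\ref{lem:uniform-regularized-control} with $V_\epsilon$ replaced by $V_0$. In the defocusing case, or the focusing case with small data (uniformly in $n$, since $\psi_{n,0}\to\psi_0$), this gives
\[
\sup_n\sup_{|t|\le T}\|\psi_n(t)\|_{H^1(\R^2)}+\sup_{|t|\le T}\|\psi(t)\|_{H^1(\R^2)}\le A<\infty .
\]
By \eqref{eq:anisotrophic-r}, the quantity $K$ of Proposition~\ref{p:10-15-2} is then bounded by $A^2$. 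Applying \eqref{eq:Gronwell-S} gives $\sup_{|t|\le T}\|\psi_n(t)-\psi(t)\|_{H^{-1}(\R^2)}\to0$.

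Next we upgrade this to $H^{3/4}$. Interpolating between $H^{-1}$ and the uniform $H^1$ bound gives $\sup_{|t|\le T}\|\psi_n-\psi\|_{H^{3/4}}\to0$. The trace estimate $\|u\|_{L^4(\bbS^1)}\lesssim\|u\|_{H^{3/4}(\R^2)}$, used in Lemma~\ref{lem:Aepsilon}, then gives uniform convergence of the traces in $L^4(\bbS^1)$. By H\"older, $\sup_{|t|\le T}|V_0(\psi_n(t))-V_0(\psi(t))|\to0$.

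Then we show that the $H^1$ norms converge. Write $\|u\|_{H^1}^2=2M(u)+2E(u)-\mu V_0(u)$ and use conservation. This gives
\[
\sup_{|t|\le T}\bigl|\|\psi_n(t)\|_{H^1}^2-\|\psi(t)\|_{H^1}^2\bigr|\lesssim|M(\psi_{n,0})-M(\psi_0)|+|E(\psi_{n,0})-E(\psi_0)|+\sup_{|t|\le T}|V_0(\psi_n)-V_0(\psi)|.
\]
All three terms vanish: the first two because $M$ and $E$ are continuous on $H^1$, again by the trace bound, and the last by the previous step.

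Finally, we expand $\|\psi_n-\psi\|_{H^1}^2=\|\psi_n\|_{H^1}^2-\|\psi\|_{H^1}^2-2\operatorname{Re}\langle(1-\Delta)\psi,\psi_n-\psi\rangle$. It remains to show the cross term tends to zero uniformly in $t$, and this is the main obstacle: we only know $\psi_n-\psi\to0$ in $H^{-1}$, so uniformity in $t$ needs compactness. We use that $t\mapsto\psi(t)\in H^1$ is continuous, so $\{(1-\Delta)\psi(t):|t|\le T\}$ is compact in $H^{-1}$. Hence $\sup_t\|P_{>N}(1-\Delta)\psi(t)\|_{H^{-1}}\to0$ as $N\to\infty$. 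We split the cross term with $P_{\le N}$ and $P_{>N}$:
\[
|\langle(1-\Delta)\psi,\psi_n-\psi\rangle|\lesssim N^2A\,\|\psi_n-\psi\|_{H^{-1}}+\|P_{>N}(1-\Delta)\psi\|_{H^{-1}}\,2A .
\]
Sending $n\to\infty$ and then $N\to\infty$ completes the proof. Nothing in these steps uses the geometry beyond the trace bound and Proposition~\ref{p:10-15-2}, which is why the line argument carries over unchanged.
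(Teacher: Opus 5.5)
Your proposal is correct and matches the paper's argument. The paper proves this by noting that the proof of Proposition~\ref{Ctsdependence} carries over verbatim to $\Sigma=\bbS^1$, and you have written out that transfer: \eqref{eq:Gronwell-S} replaces Proposition~\ref{K-estimate}, the $H^{3/4}(\R^2)\to L^4(\bbS^1)$ trace bound replaces its line analogue, and you correctly add the defocusing/small-data hypothesis (needed only for all large $n$) that the statement leaves implicit.
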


\section{Local Wellposedness for \eqref{nls-y} on Higher Dimension}

In this section, we prove Theorem~\ref{thrm:LWP}, establishing local well-posedness for \eqref{nls-y} in high dimension.

Our proof of local well-posedness will rely on the following anisotropic Strichartz estimate:

\begin{lem}[Strichartz estimate]\label{thrm:stri}For any $\psi_0 \in L_{y}^2H_{x'}^s$ and any forcing term \(f\in L^{4/3}_t \M_{y}H_{x'}^s\)
\begin{equation}\label{Duhamel}
\psi(t) : = e^{-it\Delta }\psi_ 0 - i2\mu \int_{0}^t e^{i(t-s)\Delta }f(s)ds 
\end{equation}
satisfies the Strichartz estimate
    \begin{equation}\label{eq:Stri}
        \|\psi\|_{C_tL_{y}^2H_{x'}^s\cap L_t^4C_{y}^0H_{x'}^s}\lesssim \|\psi_0\|_{L_{y}^2H_{x'}^s} + \|f\|_{L^{4/3}_t \M_{y}H_{x'}^s}
    \end{equation}
\end{lem}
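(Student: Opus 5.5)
Since $\langle\partial_{x'}\rangle^s$ commutes with $e^{it\Delta}$, it suffices to treat $s=0$: we prove the estimate with $H^s_{x'}$ replaced by $L^2_{x'}$ and then apply it to $\langle\partial_{x'}\rangle^s\psi$ and $\langle\partial_{x'}\rangle^s f$. We also note that $e^{it\Delta}=e^{it\Delta_{x'}}e^{it\partial_y^2}$, where $e^{it\Delta_{x'}}$ is unitary on $L^2_{x'}$ and commutes with every norm in sight. So everything reduces to one-dimensional estimates for $e^{it\partial_y^2}$ acting on $L^2_{x'}$-valued functions, that is, a vector-valued version of the 1d $L^4_tL^\infty_y$ Strichartz estimate.

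\textbf{Homogeneous part.} The bound in $C_tL^2_yL^2_{x'}$ is unitarity. For $L^4_tL^\infty_yL^2_{x'}$ we use the $TT^*$ method. Let $T\psi_0=e^{it\Delta}\psi_0$, viewed as a map from $L^2(\R^n)$ to $L^4_tL^\infty_yL^2_{x'}$. Then
\[
TT^*F(t)=\int_{\R} e^{i(t-s)\Delta}F(s)\,ds,
\]
and we must bound this from $L^{4/3}_tL^1_yL^2_{x'}$ to $L^4_tL^\infty_yL^2_{x'}$. Pointwise in time this is the dispersive bound
\[
\|e^{it\Delta}g\|_{L^\infty_yL^2_{x'}}\lesssim|t|^{-1/2}\|g\|_{L^1_yL^2_{x'}},
\]
which is the vector-valued 1d dispersive estimate. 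It follows from Minkowski's inequality applied to the explicit kernel $(4\pi it)^{-1/2}e^{i(y-\tilde y)^2/4t}$, exactly as in the first step of Lemma~\ref{lem:dispersive_y}. Hardy--Littlewood--Sobolev in time, with $|t|^{-1/2}$ mapping $L^{4/3}_t\to L^4_t$, then closes the estimate. Continuity in $y$ with vanishing at infinity ($C^0_y$) follows by density together with the Riemann--Lebesgue argument of Lemma~\ref{dispersive-M}.

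\textbf{Inhomogeneous part.} Here the forcing lies in $\M_y$ rather than $L^1_y$. By the duality argument of Lemma~\ref{dispersive-M}, the dispersive bound extends to $\|e^{it\Delta}g\|_{L^\infty_yL^2_{x'}}\lesssim|t|^{-1/2}\|g\|_{\M_yL^2_{x'}}$. Combined with HLS as above, this gives the $L^4_tL^\infty_y$ bound for the retarded integral $\int_0^t$ directly, since $\int_0^t\le\int_{\R}$ inside absolute values. This step needs no Christ--Kiselev lemma, because the argument is already a pointwise kernel bound.

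For the $C_tL^2$ bound we use duality. We have
\[
\Bigl\|\int_0^te^{-is\Delta}f(s)\,ds\Bigr\|_{L^2}=\sup_{\|g\|_{L^2}=1}\Bigl|\int_0^t\langle e^{is\Delta}g,f(s)\rangle\,ds\Bigr|,
\]
and each pairing is bounded by $\|e^{is\Delta}g\|_{C^0_yL^2_{x'}}\|f(s)\|_{\M_yL^2_{x'}}$. Hölder in time and the homogeneous $L^4_tC^0_yL^2_{x'}$ estimate then bound this by $\|f\|_{L^{4/3}_t\M_yL^2_{x'}}$. Time continuity follows from density and strong continuity of the group.

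The main obstacle is rigor in the measure-valued setting. One must ensure that $\langle e^{is\Delta}g,f(s)\rangle$ makes sense as the pairing of $C_0$ with $\M$, which is why the $C^0_y$ (rather than $L^\infty_y$) version of the homogeneous estimate is needed. One must also check measurability of $s\mapsto f(s)$ in $\M_yH^s_{x'}$; I would handle this by proving everything first for smooth $f$ and passing to limits in the weak-* sense.
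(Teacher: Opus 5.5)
Your bounds are correct, and your route differs from the paper's. The paper quotes the scalar 1d estimate $\|e^{it\p_y^2}\psi_0\|_{C_tL^2_y\cap L^4_tC_{0,y}}\lesssim\|\psi_0\|_{L^2_y}$ of Harrop-Griffiths--Killip--Vi\c{s}an and lifts it to $L^2_{x'}$-valued functions via the Fourier transform in $x'$. It then dualizes and applies Christ--Kiselev to the truncated integral $\int_0^t$. You instead rederive the vector-valued estimate from the dispersive bound and Hardy--Littlewood--Sobolev. You also observe, correctly, that for this non-endpoint pair the retarded estimate needs no Christ--Kiselev: the majorant $|t-s|^{-1/2}\|f(s)\|_{\M_yL^2_{x'}}$ only decreases under truncation. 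This is more self-contained and gives the $L^4_tL^\infty_yL^2_{x'}$ bound for measure-valued forcing directly. What it does not give is the $C_{0,y}$ part of the conclusion for the Duhamel term, and as written this is a genuine gap.

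\emph{Why the gap arises.} For fixed $s$ the integrand is not in $C_{0,y}L^2_{x'}$ when $f(s)$ is a genuine measure. For $f(s)=\delta_{\{y=0\}}\otimes h$ one has $\|[e^{i\tau\Delta}f(s)](\cdot,y)\|_{L^2_{x'}}=(4\pi|\tau|)^{-1/2}\|h\|_{L^2_{x'}}$ for every $y\in\R$. Decay as $|y|\to\infty$ comes only from the oscillation of $e^{iy^2/4(t-s)}$ in $s$, so no argument that bounds the integrand pointwise in $s$ can detect it.

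\emph{Why your remedy fails.} $L^1_y$ is a closed subspace of $\M_y$, so a measure-valued $f$ cannot be approximated in $L^{4/3}_t\M_yL^2_{x'}$ by smooth forcings. Moreover, $C_{0,y}$ is not weak-$*$ closed in $L^\infty_y$. For instance, $e^{i\tau\p_y^2}$ applied to an approximate identity converges weak-$*$ to $(4\pi i\tau)^{-1/2}e^{iy^2/4\tau}$, which does not vanish at infinity.

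\emph{How to repair it.} The missing idea is to factor through $L^2$. The paper's $TT^*$/Christ--Kiselev structure does this automatically, since the untruncated operator $e^{it\Delta}\int_\R e^{-is\Delta}f(s)\,ds$ lands in $L^4_tC_{0,y}L^2_{x'}$ by the homogeneous estimate. Within your framework, work on $[0,T]$ (negative times are symmetric). Choose a finite partition $I_j=[t_j,t_{j+1})$ with $\|f\|_{L^{4/3}_{I_j}\M_yL^2_{x'}}\le\delta$, and write, for $t\in I_j$,
\[
\int_0^t e^{i(t-s)\Delta}f(s)\,ds=e^{it\Delta}g_j+\int_{t_j}^t e^{i(t-s)\Delta}f(s)\,ds,\qquad g_j:=\int_0^{t_j}e^{-is\Delta}f(s)\,ds\in L^2(\R^n).
\]
Your $L^2$ duality bound puts $g_j$ in $L^2$, so the first term lies in $C_{0,y}L^2_{x'}$ for a.e.\ $t$ by your homogeneous estimate. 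By your HLS argument the second term has $L^4_{I_j}L^\infty_yL^2_{x'}$-norm $\lesssim\|f\|_{L^{4/3}_{I_j}\M_yL^2_{x'}}$. Summing the fourth powers over $j$, the total error in $L^4_tL^\infty_yL^2_{x'}$ is $\lesssim\delta^{2/3}\|f\|^{1/3}_{L^{4/3}_t\M_yL^2_{x'}}$. Letting $\delta\to0$ and using that $L^4_tC_{0,y}L^2_{x'}$ is closed in $L^4_tL^\infty_yL^2_{x'}$ completes the argument.
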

\begin{proof}We prove \eqref{eq:Stri} in the case \(s = 0\). The general case is obtained by applying the \(s=0\) case to \((1 - \Delta_{x'})^{\frac s2}\psi\).

From \cite{harropgriffiths2025scatteringnonlinearschrodingerequation}, we have the \(1d\) estimate,
    \begin{equation*}
        \|e^{it\p_y^2}\psi_0\|_{C_tL^2_y\cap L_t^4C_{0,y}}\lesssim \|\psi_0\|_{L^2_y}.
    \end{equation*}
    Applying the Fourier transform in \(x'\), we have
    \begin{equation}
    \begin{aligned}\label{111}
        \|e^{it\Delta}\psi_0\|_{C_tL^2_{y}L^2_{x'}\cap L_t^4C_{y}^0L^2_{x'}}&=\|e^{-it\sum_{i=1}^{n-1}\xi_i^2}e^{it\p_{y}^2}\hat{\psi_0}(\xi,{y})\|_{C_t L^2_{y}L^2_{\xi'}\cap L_t^4C_{y}^0L^2_{\xi'}}\\
        &\lesssim\|\hat{\psi_0}\|_{L^2_{\xi'}L^2_{y}}\\
        &=\|{\psi_0}\|_{L^2(\R^n)}.
    \end{aligned}
    \end{equation}
 
    From the fact that \(\M\) is the dual space of \(C_0\), we deduce the dual estimate,
    \begin{equation}\label{222}
      \|\int_\R e^{-it\Delta}f(t)dt\|_{L^2(\R^n)}\lesssim  \|f\|_{L^{4/3}_t\M_{y}L^2_{x'}}  
    \end{equation}
    Combining \eqref{111} and \eqref{222} with the Christ--Kiselev Lemma \cite[Theorem 1.2]{MR1809116}, we conclude the desired estimate
    \[
    \|\psi\|_{C_tL_{y}^2L^2_{x'}}+\|\psi\|_{L_t^4C_{0,y}L^2_{x'}}\lesssim \|\psi_0\|_{L^2(\R^n)} + \|f\|_{L^{4/3}_t\M_{y} L^2_{x'}}.
    \]
    
\end{proof}
\begin{prop}[Existence and Uniqueness]\label{LWP}
    Let \(s>\frac n 2\), then given \(\psi_0\in L_y^2H_{x'}^s\) there exists a time \(T=T(\psi_0)>0\) so that the equation \eqref{nls-y} admits a unique solution  \(\psi\in C_T L_y^2 H_{x'}^s\cap L^4_T C_{0,y}H_{x'}^s\) in both the focusing and defocusing cases. 

\end{prop}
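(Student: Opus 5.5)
We prove the proposition by a contraction mapping argument in the space
\[
X_T:=C([-T,T];L_y^2H_{x'}^s)\cap L^4([-T,T];C_{0,y}H_{x'}^s),
\]
with norm the sum of the two norms. The map is
\[
\Phi(\psi)(t):=e^{it\Delta}\psi_0-i2\mu\int_0^te^{i(t-s)\Delta}\bigl[\delta_{\{y=0\}}|\psi(s)|^2\psi(s)\bigr]\,ds,
\]
and we work on a ball $B_R=\{\|\psi\|_{X_T}\leq R\}$ with $R=2C\|\psi_0\|_{L^2_yH^s_{x'}}$. By the Strichartz estimate of Lemma~\ref{thrm:stri},
\[
\|\Phi(\psi)\|_{X_T}\leq C\|\psi_0\|_{L^2_yH^s_{x'}}+C\bigl\|\delta_{\{y=0\}}|\psi|^2\psi\bigr\|_{L^{4/3}_T\M_yH^s_{x'}},
\]
so everything reduces to estimating the measure-valued forcing.

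\textbf{Step 1: the nonlinear estimate.} For a function $g(x',y)$ continuous in $y$ with values in $H^s_{x'}$, the total variation of $\delta_{\{y=0\}}g$ as an $H^s_{x'}$-valued measure in $y$ is $\|g(\cdot,0)\|_{H^s_{x'}}\leq\|g\|_{C_{0,y}H^s_{x'}}$. Since $s>\frac n2>\frac{n-1}2$, the algebra property \eqref{eq:algebraic} holds in $\R^{n-1}_{x'}$, and it gives
\[
\bigl\|\delta_{\{y=0\}}|\psi|^2\psi\bigr\|_{\M_yH^s_{x'}}\lesssim\|\psi(\cdot,0)\|_{H^s_{x'}}^3\leq\|\psi\|_{C_{0,y}H^s_{x'}}^3.
\]
Taking the $L^{4/3}_T$ norm and applying H\"older in time, with $\frac34=\frac34+0$ against the exponent relation $\|\,\|\psi\|^3\|_{L^{4/3}_T}=\|\psi\|_{L^4_T}^3$ after one extra factor, we get
\[
\bigl\|\delta_{\{y=0\}}|\psi|^2\psi\bigr\|_{L^{4/3}_T\M_yH^s_{x'}}\lesssim\Bigl(\int_{-T}^T\|\psi\|_{C_{0,y}H^s_{x'}}^4\,dt\Bigr)^{3/4}=\|\psi\|_{L^4_TC_{0,y}H^s_{x'}}^3.
\]
This bound carries no explicit power of $T$, which is the main obstacle: smallness cannot come from the length of the time interval directly.

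\textbf{Step 2: extracting smallness.} The smallness instead comes from the linear flow. Write $\psi=e^{it\Delta}\psi_0+w$. By Lemma~\ref{thrm:stri}, $\|e^{it\Delta}\psi_0\|_{L^4_\R C_{0,y}H^s_{x'}}<\infty$, so by dominated convergence
\[
\eta(T):=\|e^{it\Delta}\psi_0\|_{L^4_TC_{0,y}H^s_{x'}}\to0\qquad\text{as }T\to0.
\]
This is why $T$ depends on $\psi_0$ itself and not only on its norm. We therefore run the contraction on the set
\[
S=\bigl\{\psi:\ \|\psi\|_{C_TL^2_yH^s_{x'}}\leq R,\ \|\psi\|_{L^4_TC_{0,y}H^s_{x'}}\leq2\eta\bigr\}.
\]
For $\psi\in S$, Steps 1 and Lemma~\ref{thrm:stri} give
\[
\|\Phi(\psi)\|_{L^4_TC_{0,y}H^s_{x'}}\leq\eta+C(2\eta)^3,
\]
and $\eta+C(2\eta)^3\leq2\eta$ once $\eta$ is small, i.e.\ once $T$ is small. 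The $C_TL^2_yH^s_{x'}$ component satisfies $\|\Phi(\psi)\|_{C_TL^2_yH^s_{x'}}\leq C\|\psi_0\|+C\eta^3\leq R$. Hence $\Phi$ maps $S$ into itself.

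\textbf{Step 3: contraction.} The difference of nonlinearities satisfies
\[
\bigl\||\psi|^2\psi-|\phi|^2\phi\bigr\|_{H^s_{x'}}\lesssim\bigl(\|\psi\|_{H^s_{x'}}^2+\|\phi\|_{H^s_{x'}}^2\bigr)\|\psi-\phi\|_{H^s_{x'}},
\]
evaluated at $y=0$. Applying H\"older in time as in Step 1 yields
\[
\|\Phi(\psi)-\Phi(\phi)\|_{X_T}\lesssim\eta^2\|\psi-\phi\|_{X_T}.
\]
Thus $\Phi$ is a contraction on $S$, which is complete in the metric of $X_T$.

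\textbf{Step 4: solution and uniqueness.} The fixed point is a strong solution in the sense of Definition~\ref{def:duhm}, since $L^4_TC_{0,y}H^s_{x'}$ controls $\psi(\cdot,0)$ in $L^4_TL^4_{x'}$. For uniqueness in the full class $X_T$, take two solutions $\psi,\phi$ and let $T'\leq T$ be small enough that both have small $L^4_{T'}C_{0,y}H^s_{x'}$ norm, which again holds by dominated convergence. The contraction estimate of Step 3 on $[0,T']$ forces $\psi=\phi$ there, and a continuity argument in time extends this to all of $[-T,T]$. Neither the contraction nor the uniqueness argument uses the sign of $\mu$, so the proof covers both the focusing and defocusing cases.
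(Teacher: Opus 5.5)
Your proposal is correct and follows the paper's argument: a contraction built on Lemma~\ref{thrm:stri} and the $H^s_{x'}$ algebra property, with smallness coming from $\|e^{it\Delta}\psi_0\|_{L^4_TC_{0,y}H^s_{x'}}\to0$ rather than from a power of $T$. The differences are minor. The paper contracts in the ball of radius $2\eta$ in $L^4_TC_{0,y}H^s_{x'}$ alone and recovers the $C_TL^2_yH^s_{x'}$ bound afterwards from \eqref{eq:Stri}, whereas you carry both norms in $S$; and your Step~4 upgrades the paper's uniqueness within the ball to uniqueness in the full class $X_T$.
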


\begin{proof}

    Given \(\eta>0\) and \(\psi_0\in L^2_{y}H^s_{x'}\), we may combine \eqref{thrm:stri} with Monotone Convergence Theorem to find \(T=T(\eta,\psi_0)>0\) such that
    \begin{equation}
        \|e^{it\Delta}\psi_0\|_{L^4_{[-T,T]}C_{y}^0H^s_{x'}}\leq \eta
    \end{equation}
    We then define the closed ball
\[
B=\{\psi\in L^4_{[-T,T]}C_{0,y}H^s_{x'}:\|\psi\|_{L^4_{[-T,T]}C_{0,y}H^s_{x'}}\leq 2\eta\}
\]
and consider the mapping
\[
\Phi[\psi](t):=e^{it\Delta}\psi_0-i\int_0^te^{i(t-s)\Delta}|\delta_{\{y=0\}}|\psi(s)|^2\psi(s)|dx
\]

Applying the Strichartz estimate \eqref{eq:Stri} and the algebra estimate \eqref{eq:algebraic}, for \(\psi\in B\) we may estimate
\begin{align*}
    \|\Phi[\psi]\|_{L^4_{[-T,T]}C_{y}^0H^s_{x'}}
    &\leq  \|e^{it\Delta}\psi_0\|_{L^4_{[-T,T]}C_{y}^0H^s_{x'}}+A\|\delta_{\{y=0\}}|\psi|^2\psi\|_{L^{4/3}_{[-T,T]} \M_{y}H^s_{x'}}\\
    &\leq  \eta+A\||\psi|^2\psi\|_{L^{4/3}_{[-T,T]} C_{y}^0H^s_{x'}}\\
    &\leq  \eta+A\|\psi\|^3_{L^{4}_{[-T,T]} C_{y}^0H^s_{x'}}\\
    &\leq \eta+8A\eta^3
\end{align*}
for some absolute constant \(A>0\) given by the Strichartz estimate \eqref{eq:Stri}. Similarly, for \(\psi,\phi\in B\) we may bound the difference 
\[
\|\Phi[\psi]-\Phi[\phi]\|_{L^4_{[-T,T]}C_{y}^0H^s_{x'}}\leq 24A\eta^2\|\psi-\phi\|_{L^4_{[-T,T]}C_{y}^0H^s_{x'}}
\]
For \(\eta>0\) sufficiently small, these estimates show that \(\Psi:B\to B\) and that this mapping is a contraction. Thus, there exists a unique \(\psi\) such that \(\Phi[\psi]=\psi\). Applying \eqref{eq:Stri} once again, we have
\begin{equation}\label{123}
    \begin{aligned}
    \|\psi\|_{C_{[-T,T]}L^2_{y}H^s_{x'}\cap L_{[-T,T]}^4C_yH_x^s}&\lesssim \|\psi_0\|_{L_{y}^2H^s_{x'}} + \|\delta_{\{y=0\}}|\psi|^2\psi\|_{L^{4/3}_{[-T,T]} \M_{y}H^s_{x'}}\\
    &\lesssim \|\psi_0\|_{L_{y}^2H^s_{x'}} +\||\psi|^2\psi\|_{L^{4/3}_{[-T,T]} C_{y}^0H^s_{x'}}\\
    &\leq\|\psi_0\|_{L_{y}^2H^s_{x'}} +\|\psi\|^3_{L^{4}_{[-T,T]} C_{y}^0H^s_{x'}}<\infty
    \end{aligned}
\end{equation}
and so \(\psi\in C_{[-T,T]}L^2_{y}H^s_{x'}\cap L_{[-T,T]}^4C_{y}^0H^s_{x'}\) is a solution to \eqref{nls-y} satisfying \(\psi(0)=\psi_0\).

\end{proof}

\begin{prop}[Continuous dependence on data]
    For any $\psi_0 \in L_y^2H_{x'}^s$ with solution on $[-T,T]$ where \(T=T(\psi_0)\) is chosen as in Proposition \ref{LWP} and any $0<\epsilon<T$, there exists $\delta > 0$ such that
    if 
    \[\|\tilde \psi_0  -\psi_0\|_{L_y^2H_{x'}^s} < \delta,\]
    then for \(T'=T-\epsilon\) we have
    \eq{Lipschitz}{
    \|\tilde \psi-\psi\|_{C_{[-T',T']}L_y^2H_{x'}^s \cap\ L^4_{[-T',T']} C_{0,y}H_{x'}^s}\lesssim \|\tilde \psi_0  -\psi_0\|_{L_y^2H_{x'}^s}.
    }
\end{prop}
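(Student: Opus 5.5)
Write $X_I := C_I L_y^2H_{x'}^s \cap L^4_I C_{0,y}H_{x'}^s$ for $I=[-T',T']$, and $\|\cdot\|_{S_I}:=\|\cdot\|_{L^4_I C_{0,y}H_{x'}^s}$. The plan is to compare $\tilde\psi$ and $\psi$ through the Duhamel formula, using the Strichartz estimate \eqref{eq:Stri} together with the algebra property \eqref{eq:algebraic} (valid since $s>\frac{n-1}{2}$). Since the time of existence from Proposition~\ref{LWP} need not be small relative to $\|\psi\|_{S_{[-T,T]}}$, I cannot contract on all of $[-T',T']$ at once. Instead I will subdivide the interval.

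First, since $\|\psi\|_{S_{[-T,T]}}<\infty$, I split $[0,T']$ (and likewise $[-T',0]$) into finitely many intervals $I_j=[t_j,t_{j+1}]$, $j=0,\dots,J-1$, with $\|\psi\|_{S_{I_j}}\le\eta$, where $\eta$ is small in terms of the Strichartz constant $A$. The number $J$ depends only on $\psi$ and $\epsilon$. The margin $\epsilon$ is used to guarantee that $\tilde\psi$ exists on all of $[-T',T']$: I will not assume this a priori, but construct $\tilde\psi$ on each $I_j$ by the same contraction argument as Proposition~\ref{LWP}, and continue it step by step.

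On $I_j$, write $w=\tilde\psi-\psi$. Duhamel from time $t_j$ together with \eqref{eq:Stri} gives
\[
\|w\|_{X_{I_j}}\le A\|w(t_j)\|_{L^2_yH^s_{x'}}+A\bigl\||\tilde\psi|^2\tilde\psi-|\psi|^2\psi\bigr\|_{L^{4/3}_{I_j}C_{0,y}H^s_{x'}}.
\]
Using $\|\delta_{\{y=0\}}g\|_{\M_yH^s_{x'}}\le\|g\|_{C_{0,y}H^s_{x'}}$, the algebra estimate, and H\"older in time, the nonlinear term is bounded by
\[
C\bigl(\|\psi\|_{S_{I_j}}^2+\|w\|_{S_{I_j}}^2\bigr)\|w\|_{S_{I_j}}\le C(\eta^2+\|w\|_{S_{I_j}}^2)\|w\|_{S_{I_j}}.
\]
A continuity (bootstrap) argument in the right endpoint then shows the following: if $\|w(t_j)\|_{L^2_yH^s_{x'}}\le\delta_j$ is small, then $\tilde\psi$ exists on $I_j$ and $\|w\|_{X_{I_j}}\le 2A\|w(t_j)\|_{L^2_yH^s_{x'}}$. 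Here $\|w\|_{S}$ small is needed to absorb the cubic term. The bootstrap uses continuity of $\tau\mapsto\|w\|_{S_{[t_j,\tau]}}$, which holds because the $L^4_t$ norm is absolutely continuous in the endpoint, together with local existence of $\tilde\psi$ near $t_j$ from Proposition~\ref{LWP}.

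Iterating over $j$ gives $\|w(t_{j})\|\le(2A)^{j}\|\tilde\psi_0-\psi_0\|$. Choosing $\delta\le(2A)^{-J}\min_j\delta_j$ keeps every step in the small regime. Summing over the $2J$ intervals yields \eqref{Lipschitz} with constant $J(2A)^J$, which depends on $\psi$ and $\epsilon$ but not on $\tilde\psi_0$.

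The main obstacle I expect is the bootstrap on each $I_j$. It must simultaneously control the existence of $\tilde\psi$ on all of $I_j$ (ruling out blow-up before $t_{j+1}$) and close the difference estimate. For this I will run the contraction of Proposition~\ref{LWP} in a ball around $\psi$ rather than around $0$, i.e. treat $\tilde\psi=\psi+w$ and solve for $w$. This gives existence and the bound in one step, and uniqueness in $X_{I_j}$ identifies the result with the solution $\tilde\psi$.
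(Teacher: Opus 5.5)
Your proof is correct, but it takes a different route from the paper's. It rests on a premise that does not hold here. In Proposition~\ref{LWP} the time $T$ is chosen so that $\|e^{it\Delta}\psi_0\|_{L^4_{[-T,T]}C_{0,y}H^s_{x'}}\le\eta$, and $\psi$ is obtained as the fixed point in the ball of radius $2\eta$. Hence $\|\psi\|_{S_{[-T,T]}}\le 2\eta$ is already small, and no subdivision is needed.

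The paper uses exactly this smallness. It takes $\delta=\eta/\tilde A$, so that $\|e^{it\Delta}\tilde\psi_0\|_{S_{[-T',T']}}\le 2\eta$. This lets $\tilde\psi$ be built by the same contraction on the whole interval. The difference then satisfies
\[
\|\tilde\psi-\psi\|_{S}\le \tilde A\|\tilde\psi_0-\psi_0\|+C\eta^2\|\tilde\psi-\psi\|_{S},
\]
which closes in one step. In that argument both $\delta$ and the Lipschitz constant are absolute.

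Your subdivision-and-iteration scheme gives up these uniform constants: $\delta\le(2A)^{-J}\min_j\delta_j$ and the constant $J(2A)^J$ both depend on $\psi$. You also need uniqueness in $X_{I_j}$ to glue the pieces and to identify $\psi+w$ with the solution launched from $\tilde\psi_0$. In exchange, your argument is more robust. It uses only $\|\psi\|_{S_I}<\infty$, so it proves local Lipschitz dependence on any compact subinterval of the lifespan, however the existence time was selected. Your choice to contract for $w$ around $\psi$, rather than bootstrapping, is the clean way to get existence and the bound at once.

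Neither argument actually uses the margin $\epsilon$. Your claim that $\epsilon$ is what guarantees existence of $\tilde\psi$ is not borne out by your own construction, which works on all of $[-T,T]$.
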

\begin{proof}
    Taking \(\eta>0\) as in Proposition~\ref{LWP}, we recall that 
    \[\|e^{it\Delta}\psi_0\|_{L^4_{[-T,T]}C_{y}^0H^s_{x'}}\leq\eta.\]
By applying \eqref{eq:Stri}, we have
\begin{align*}
    \|e^{it\Delta}\tilde{\psi}_0\|_{L^4_{[-T',T']}C_{y}^0H^s_{x'}}&\leq \|e^{it\Delta}{\psi}_0\|_{L^4_{[-T',T']}C_{y}^0H^s_{x'}}+\|e^{it\Delta}(\tilde{\psi}_0-\psi_0)\|_{L^4_{[-T',T']}C_{y}^0H^s_{x'}}\\
    &\leq \eta+\widetilde A\delta
\end{align*}
for some absolute constant \(\widetilde A>0\). Taking \(\delta=\frac{\eta}{\widetilde A}\), we conclude that
\[
\|e^{it\Delta}\tilde{\psi}_0\|_{L^4_{[-T',T']}C_{y}^0H^s_{x'}}\leq 2\eta.
\]

Proceeding as in the proof of Proposition~\ref{LWP}, we now use \eqref{eq:Stri} to bound \begin{align*}
         \|\tilde\psi-\psi\|_{L_{[-T,T]}^4C_yH_x^s}&\leq \widetilde A\|\tilde\psi_0-\psi_{0}\|_{L_{y}^2H^s_{x'}} \\&\quad + \tfrac32A\|\tilde\psi-\psi\|_{L^{4}_{[-T,T]} C_{y}^0H^s_{x'}}(\|\tilde\psi\|_{L^{4}_{[-T',T']} C_{y}^0H^s_{x'}}^2+\|\psi\|_{L^{4}_{[-T',T']} C_{y}^0H^s_{x'}}^2)\\
         &\leq \widetilde A\|\tilde\psi_{0}-\psi_{0}\|_{L_{y}^2H^s_{x'}}+24A\eta^2\|\tilde\psi-\psi\|_{L^{4}_{[-T',T']} C_{y}^0H^s_{x'}}.
\end{align*}
Our choice of \(\eta\) ensures that \(24A\eta^2<1\), from which we obtain \eqref{Lipschitz}.
\end{proof}


\section{Radial Solutions for \eqref{nls-S} on Higher Dimension}
In this section, we prove Theorem~\ref{thrm:radial}, which demonstrates global well-posedness for \eqref{nls-S} with radial initial data when \(n\geq 3\). For this section, we assume \(f(x)=f(|x|)=f(r)\) for \(x\in \R^n\).

As in Section~\ref{sec:radial}, we use \(L_{r}^p\) and \(H_{r}^s\) to denote the \(1d\) Lebesgue and Sobolev spaces with \(r\in \R\). We take \(L_{r,+}^p\) and \(H_{r,+}^s\) to denote the corresponding \(1d\) Lebesgue and Sobolev spaces with \(r\in (0,\infty)\). Finally, we take \(L^p(r^{n-1}dr)\) and \(H^s(r^{n-1}dr)\) to denote the Lebesgue and Sobolev spaces for radial \(n\)-dimensional functions, with norms
\begin{alignat*}{3}
\|f\|_{L^p(r^{n-1}dr)}&:=\left(\int_0^\infty |f|^pr^{n-1}dr\right)^{\frac 1 p} \quad &&\text{for }1\leq p <\infty,\\
\|f\|_{H^s(r^{n-1}dr)}&:=\left(\int_0^\infty (1+\rho^2)^s|\tilde f(\rho)|^2\rho^{n-1} d\rho\right)^{\frac 1 2}\quad&&\text{for \(s\in \R\)},
\end{alignat*}
where we now take the Hankel transform of \(f\) to be
\[
\widetilde f(\rho)
:=\mathcal H_\nu[f](\rho)
:=\int_0^\infty
f(r)\frac{J_\nu(r\rho)}{(r\rho)^\nu}\,r^{n-1}\,dr,
\]
with \(\nu = \frac n2-1\).

We start by proving a weighted dispersive estimate that we will frequently use in this section.
\begin{lem}\label{dispersive_r_high}
Let \(\gamma=\frac n 2-\frac 2 3\), then
    \[
    \|r^{\gamma}e^{it\Delta}f\|_{C_{0,r,+}}\lesssim |t|^{-\frac 2 3}\|r^{-\gamma}f\|_{ L^1(r^{n-1}dr)}
    \]
\end{lem}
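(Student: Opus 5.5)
Mirroring the proof of \eqref{09-15-22-57} in Lemma~\ref{l:dispersive estimate in section 4}, we will write the propagator for radial functions as an explicit Hankel-type integral and then apply the Landau bound. For radial \(f\) and \(t>0\) (the case \(t<0\) follows by symmetry), we write \(e^{it\Delta}f\) as a convolution with the Gaussian kernel \((4\pi i t)^{-n/2}e^{i|x-y|^2/(4t)}\) and pass to polar coordinates \(y=s\omega\). The angular integral is then evaluated using the standard identity
\[
\int_{\bbS^{n-1}} e^{-i\frac{rs}{2t}\,\hat x\cdot\omega}\,d\omega=(2\pi)^{\frac n2}\Bigl(\frac{rs}{2t}\Bigr)^{-\nu}J_\nu\Bigl(\frac{rs}{2t}\Bigr),\qquad \nu=\tfrac n2-1 .
\]
This gives the representation
\[
e^{it\Delta}f(r)=c_n\,t^{-\frac n2}\int_0^\infty e^{i\frac{r^2+s^2}{4t}}\Bigl(\frac{rs}{2t}\Bigr)^{-\nu}J_\nu\Bigl(\frac{rs}{2t}\Bigr)f(s)\,s^{n-1}\,ds .
\]

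Next we multiply by \(r^\gamma\) and redistribute powers so that the kernel depends only on \(x=\frac{rs}{2t}\). We write \(r^\gamma=(2t)^\gamma x^\gamma s^{-\gamma}\). The integrand becomes \(x^{\gamma-\nu}J_\nu(x)\cdot s^{-\gamma}f(s)s^{n-1}\), with prefactor \(t^{-\frac n2+\gamma}=t^{-\frac23}\). Since \(\gamma-\nu=\frac13\), the kernel is exactly \(G_\nu(x)=x^{1/3}J_\nu(x)\), which is the same function as in the two-dimensional case but now with half-integer or integer order \(\nu\geq\frac12\). Landau's bound \(|J_\nu(x)|\lesssim x^{-1/3}\) is uniform over all real orders \(\nu\geq0\), so it applies here. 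Hence
\[
|r^\gamma e^{it\Delta}f(r)|\lesssim |t|^{-\frac23}\int_0^\infty |s^{-\gamma}f(s)|\,s^{n-1}\,ds=|t|^{-\frac23}\|r^{-\gamma}f\|_{L^1(r^{n-1}dr)} .
\]
Note that the phase \(e^{i(r^2+s^2)/4t}\) has modulus one and can be ignored.

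It then remains to show that \(r^\gamma e^{it\Delta}f\in C_{0,r,+}\), i.e.\ that it is continuous on \((0,\infty)\) and vanishes as \(r\to0\) and as \(r\to\infty\). For this we use the Bessel asymptotics \cite{NIST:DLMF}. As \(x\to0\) we have \(J_\nu(x)\sim A_\nu x^\nu\), so \(G_\nu(x)\to0\) because \(\nu+\frac13>0\). As \(x\to\infty\) we have \(J_\nu(x)=O(x^{-1/2})\), so \(G_\nu(x)=O(x^{-1/6})\to0\). Thus \(G_\nu\in C_0(\R^+)\). For fixed \(s>0\) and \(t\), the integrand is continuous in \(r\), tends to zero as \(r\to0\) or \(r\to\infty\), and is dominated by the integrable function \(\|G_\nu\|_\infty|s^{-\gamma}f(s)|s^{n-1}\). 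Dominated convergence therefore gives continuity and decay at both ends. We will first carry out the argument for nice \(f\) (for instance \(f\) with \(r^{-\gamma}f\in L^1(r^{n-1}dr)\) and compact support in \((0,\infty)\), where Fubini is justified), and then extend by density using the uniform bound, since \(C_0\) is closed in \(L^\infty\).

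The only genuinely delicate step is justifying the Hankel representation for general data in the weighted \(L^1\) class, because \(e^{it\Delta}\) is a priori defined on \(L^2\) or \(L^1\). We will handle this by proving the estimate on the dense class above, where the kernel formula is just the classical one from Lemma~\ref{dispersive-M}, and then defining the operator on the rest of the class by continuous extension. The exponent bookkeeping \(\gamma-\nu=\frac13\), which is what makes Landau's bound applicable, should be checked carefully; everything else is routine.
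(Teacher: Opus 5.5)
Your proposal is correct and follows essentially the same route as the paper. Both arguments use the radial kernel formula obtained from the Fourier transform of surface measure on $\mathbb{S}^{n-1}$, then apply the Landau bound $|J_\nu(x)|\lesssim x^{-1/3}$, where $\gamma-\nu=\tfrac13$ yields the factor $|t|^{-2/3}$, and finish with dominated convergence using the small- and large-argument Bessel asymptotics for the $C_0$ property. Your density remark, justifying the kernel formula on the weighted $L^1$ class, is somewhat more careful than the paper, which simply refers back to the two-dimensional argument.
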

\begin{proof} Recalling that the kernel of \(e^{it\Delta}\) is \((4\pi it )^{-\frac n2}e^{i|x|^2/4t}\), we compute that
    \begin{align*}
        (e^{it\Delta} f)(r) &= \frac{1}{(4\pi i t)^{n/2}} e^{i\frac{r^2}{4t}} \int_0^\infty \left(\int_{\mathbb{S}^{n-1}} e^{-i\frac{r\rho}{2t} \omega \cdot \sigma} dS(\sigma) \,\right) e^{i\frac{\rho^2}{4t}} f(\rho) \rho^{n-1} d\rho\\
        &=\frac{1}{(4\pi i t)^{n/2}} e^{i\frac{r^2}{4t}} \int_0^\infty (2\pi)^{\frac{n}{2}} \left(\frac{r\rho}{2t}\right)^{-\nu} J_\nu\left(\frac{r\rho}{2t}\right) e^{i\frac{\rho^2}{4t}} f(\rho) \rho^{n-1} d\rho,
    \end{align*}
    where \(\nu=\frac n 2-1\) as above.  We again recall the Landau bound \cite{Landou2000Bessel} given by
    \[
    |J_\nu(x)|\lesssim |x|^{-\frac 1 3}, \quad\text{uniformly for all \(\nu\in \R^+\)}.
    \]
    It follows that
    \[
    |(e^{it\Delta} f)(r)|\lesssim |t|^{-\frac{n}{2}} \int_0^\infty \left(\frac{r\rho}{|t|}\right)^{-\nu - \frac{1}{3}} |f(\rho)| \rho^{n-1} d\rho,
    \]
    and therefore, noting that \(\gamma = \nu+\frac13\),
    \[
    r^{\gamma} |(e^{it\Delta} f)(r)| \lesssim |t|^{-\frac{2}{3}} \int_0^\infty \rho^{-\gamma} |f(\rho)| \rho^{n-1} d\rho.
    \]
    A similar argument to the proof of Lemma~\ref{l:dispersive estimate in section 4} demonstrates that the left hand side of this expression is continuous in \(r\) and vanishes at both zero and infinity.  Consequently,
    \[
    \|r^{\gamma}e^{it\Delta}f\|_{C_{0,r,+}}\lesssim |t|^{-\frac 2 3}\|r^{-\gamma}f\|_{ L^1(r^{n-1}dr)},
    \]
    as claimed.
\end{proof}

We now apply \eqref{dispersive_r_high} to \(\chi f\), where \(\chi(r)\) is again a smooth bump function supported on \((\frac 1 2, 2)\) that is identically \(1\) on \([\frac34,\frac54]\), to conclude that
\begin{equation}\label{dispersive-C-high}
    \|\chi e^{it\Delta}\chi f\|_{C_{0,r,+}}\lesssim |t|^{-\frac 2 3}\|\chi f\|_{ L^1(r^{n-1}dr)}\approx |t|^{-\frac 2 3}\|\chi f\|_{ L^1_{r,+}},
\end{equation}
    and by duality,
\begin{equation}\label{dispersive-M-high}
        \|\chi e^{it\Delta}\chi f\|_{L^\infty_{r,+}}\lesssim |t|^{-\frac 2 3}\|\chi f\|_{ \M(r^{n-1}dr)}\approx |t|^{-\frac 2 3}\|\chi f\|_{ \M_{r,+}}.
\end{equation}
We also note that by applying the \(1d\) Gagliardo--Nirenberg--Sobolev inequality and the product rule, if \(f\in H^1(\R^n)\) is radial then \(\chi f\in C_{0,r,}\) with the estimate
    \begin{align} \label{highd-embed}
        \|\chi f\|_{L^\infty_{r,+}}^2\lesssim \|\chi f\|_{L^2_{r,+}}\|\chi f\|_{\dot{H}^1_{r,+}}\lesssim\|f\|_{L^2_{([1/2,2];dr)}}\|f\|_{H^1_{([1/2,2];dr)}}\lesssim \|f\|_{H^1(\R^n)}^2.
    \end{align}

We use the same convention as Section~\ref{sec:radial} for the Littlewood--Paley projection \(P^r_{\geq N}\) in radial variables of a function $f(x)=f(r)$ with $r$ supported in a compact subset of $(0,\infty)$, i.e., we first extend by zero to \(r\in \R\), denoting the extension by \(Ef\), and then applying the usual Littlewood--Paley projection on \(\R\) in the variable \(r\). We state the corresponding Bernstein inequalities in the following lemma, with proof as in \cite{Tao2006NonlinearDispersive}:

\begin{lem}\label{Bernstein_high}
For all compactly supported \(f\colon (0,\infty)\to \C\) we have the estimates,
\begin{align}
    &\|P^r_{\leq N}Ef\|_{L^\infty_r}\lesssim N^{-s+\frac 1 2}\|Ef\|_{H^s_r}=N^{-s+\frac 1 2}\|f\|_{H^s_{r,+}} \qquad s<\frac 1 2,\label{bernie down high}\\
    &\|P^r_{> N}Ef\|_{L^\infty_r}\lesssim N^{-s+\frac 1 2}\|Ef\|_{H^s_r}=N^{-s+\frac 1 2}\|f\|_{H^s_{r,+}} \qquad s>\frac 1 2.\label{bernie up high}
\end{align}
\end{lem}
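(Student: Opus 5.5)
These are standard one-dimensional Bernstein inequalities applied to the zero extension \(Ef\), so the plan is to reduce to Fourier analysis on \(\R\). First observe that \(Ef\) is compactly supported in \((0,\infty)\subset\R\). The equality \(\|Ef\|_{H^s_r}=\|f\|_{H^s_{r,+}}\) is therefore just the definition of the half-line norm used throughout Section~\ref{sec:radial}: for such \(f\), the zero extension realises the \(H^s_{r,+}\) norm, or at least an equivalent one. So the only content is the two inequalities for a general \(g=Ef\in H^s(\R)\).

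For \eqref{bernie down high}, I would use Fourier inversion and Cauchy--Schwarz:
\[
\|P^r_{\leq N}g\|_{L^\infty}\lesssim\int_{|\rho|\leq 2N}|\hat g(\rho)|\,d\rho\leq\Bigl(\int_{|\rho|\leq2N}\langle\rho\rangle^{-2s}d\rho\Bigr)^{1/2}\|g\|_{H^s}.
\]
Since \(s<\frac12\), we have \(-2s>-1\), so the first factor is
\[
\int_{|\rho|\le 2N}\langle\rho\rangle^{-2s}\,d\rho\lesssim N^{1-2s},
\]
using \(N\geq1\). This gives the bound \(N^{\frac12-s}\|g\|_{H^s}\).

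For \eqref{bernie up high}, the same argument on the region \(|\rho|\geq N\) gives
\[
\Bigl(\int_{|\rho|\ge N}|\rho|^{-2s}\,d\rho\Bigr)^{1/2}\lesssim N^{\frac12-s},
\]
and this integral converges precisely because \(s>\frac12\).

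There is no real obstacle. The only points needing care are the range conditions on \(s\), which make each integral behave as stated, and the remark that \(\varphi\) is bounded, so the multiplier introduces no extra loss.
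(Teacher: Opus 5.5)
Your proof is correct and is the standard argument behind these bounds: Fourier inversion, boundedness of $\varphi$ and $1-\varphi$ with $1-\varphi(\cdot/N)$ supported in $|\rho|\geq N$, then Cauchy--Schwarz against $\langle\rho\rangle^{-2s}$. The paper gives no proof of its own beyond citing \cite{Tao2006NonlinearDispersive}, so there is nothing further to compare. One small tightening: read $\|Ef\|_{H^s_r}=\|f\|_{H^s_{r,+}}$ strictly as the definition of the norm on compactly supported $f$, not as an equivalence with the usual restriction norm on $(0,\infty)$. That equivalence is not uniform over all compactly supported $f$ for some admissible $s$; for example, at $s=-\frac34$ the functions $\delta^{-1}\psi(\cdot/\delta)$ with $\int\psi=1$ have restriction norm $O(\delta^{1/4})$, while their zero extensions stay bounded below in $H^{-3/4}(\R)$.
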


Proceeding as in the the case \(n=2\), we once again start with a uniqueness argument.
\begin{prop}\label{k-estimate-high}
    Suppose that for some \(T>0\) there are two radial solutions \(\psi,\phi\in C([-T,T];H_w^1(\R^n))\) of the Duhamel formulation \eqref{eq:Dulm} of \eqref{nls-S}. Then for all \(|t|\leq T\) we have the estimate
    \begin{equation}\label{eq:Gronwell-S2}
        \|\psi(t)-\phi(t)\|_{H^{-1}(\R^n)} \lesssim (1+K|t|e^{CK^{6}t^2}) \|\psi(0)-\phi(0)\|_{H^1(\R^n)},
    \end{equation}
    where \(C>0\) is some absolute constant and
    \[
    K:=\sup_{|t|\leq T}\Bigl\{\|\chi\psi(s)\|_{ L^\infty_{r,+}}^2+\|\chi\phi(s)\|^2_{ L^\infty_{r,+}}\Bigr\}\lesssim \sup_{|t|\leq T}\Bigl\{\|\psi(s)\|_{H^1}^2+\|\phi(s)\|^2_{H^1}\Bigr\}.
    \]
\end{prop}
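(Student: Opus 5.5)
We follow the proof of Proposition~\ref{p:10-15-2} almost verbatim, with the radial dispersive estimate \eqref{dispersive-M-high} in place of \eqref{dispersive-S-chi-M}. The angular variable disappears, so $L^\infty_{r,+}L^4_\theta$ is replaced by $L^\infty_{r,+}$.

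For $0<t\le T$ we subtract the Duhamel formulas and multiply by $\chi$:
\[
\chi(\psi(t)-\phi(t))=\chi e^{it\Delta}(\psi(0)-\phi(0))-2i\mu\int_0^t\chi e^{i(t-s)\Delta}\chi\,\delta_{r=1}\bigl[|\psi|^2\psi-|\phi|^2\phi\bigr](s)\,ds.
\]
Here we use that $\chi\equiv1$ near $r=1$, so $\delta_{r=1}=\chi\,\delta_{r=1}$.

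\emph{Linear term.} The radial embedding \eqref{highd-embed} and unitarity of $e^{it\Delta}$ on $H^1$ bound it in $L^\infty_{r,+}$ by $\|\psi(0)-\phi(0)\|_{H^1(\R^n)}$.

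\emph{Nonlinear term.} We apply \eqref{dispersive-M-high}. In radial coordinates $\delta_{\mathbb S^{n-1}}g=|\mathbb S^{n-1}|\,g(1)\,\delta_{r=1}(r)$, and the weight $r^{n-1}$ equals $1$ at $r=1$. Hence the $\M_{r,+}$ norm of the forcing is $\lesssim |(|\psi|^2\psi-|\phi|^2\phi)(s,1)|\lesssim K\|\chi(\psi-\phi)(s)\|_{L^\infty_{r,+}}$. This gives
\[
\|\chi(\psi-\phi)(t)\|_{L^\infty_{r,+}}\lesssim\|\psi(0)-\phi(0)\|_{H^1}+K\int_0^t|t-s|^{-2/3}\|\chi(\psi-\phi)(s)\|_{L^\infty_{r,+}}\,ds.
\]

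\emph{Gronwall.} We apply H\"older with exponents $p'=6$, $p=6/5$, so that $|t-s|^{-4/5}$ is integrable:
\[
\int_0^t|t-s|^{-2/3}g\,ds\le\Bigl(\int_0^t|t-s|^{-4/5}ds\Bigr)^{5/6}\Bigl(\int_0^tg^6\,ds\Bigr)^{1/6}\lesssim t^{1/6}\Bigl(\int_0^t g^6\Bigr)^{1/6}.
\]
Raising to the sixth power gives
\[
g(t)^6\lesssim\|\psi(0)-\phi(0)\|_{H^1}^6+K^6t\int_0^tg^6\,ds,
\]
and the integral Gronwall inequality yields $g(t)\lesssim e^{CK^6t^2}\|\psi(0)-\phi(0)\|_{H^1}$. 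This is the source of the exponent $K^6$ in \eqref{eq:Gronwell-S2}.

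\emph{Passing to $H^{-1}$.} By duality, for $f\in H^1(\R^n)$,
\[
|\langle\delta_{\mathbb S^{n-1}}(|\psi|^2\psi-|\phi|^2\phi),f\rangle|\lesssim K\|\chi(\psi-\phi)\|_{L^\infty_{r,+}}\,|f|_{\mathbb S^{n-1}}|.
\]
For radial $f$, \eqref{highd-embed} bounds the last factor by $\|f\|_{H^1}$. For non-radial test functions $f$, we use instead the trace bound $\|f\|_{L^1(\mathbb S^{n-1})}\lesssim\|f\|_{H^1(\R^n)}$, since the nonlinearity is constant on the sphere. Either way,
\[
\|\delta_{\mathbb S^{n-1}}(\cdots)\|_{H^{-1}}\lesssim K\|\chi(\psi-\phi)\|_{L^\infty_{r,+}}.
\]
Integrating the Duhamel formula in $H^{-1}$ then gives \eqref{eq:Gronwell-S2}. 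Negative times are handled symmetrically.

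\emph{Main obstacle.} The delicate step is the bookkeeping in the measure norm. We must check that \eqref{dispersive-M-high} is applied to the correctly normalized measure, given the $r^{n-1}dr$ versus $dr$ convention, and that $\chi\delta_{r=1}$ lies in the dual space on which the duality argument producing \eqref{dispersive-M-high} from Lemma~\ref{dispersive_r_high} is valid. We also need to justify the finiteness of $K$ for weakly continuous $H^1$ solutions. This follows from \eqref{highd-embed} together with uniform boundedness of $\|\psi(t)\|_{H^1}$ on $[-T,T]$ (by the uniform boundedness principle). Continuity of $t\mapsto\chi\psi(t)$ in $L^\infty_{r,+}$ is not needed: measurability suffices for the Gronwall step.
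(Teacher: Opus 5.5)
Your proposal is correct and follows the paper's proof step by step. It uses \eqref{highd-embed} for the linear term, \eqref{dispersive-M-high} for the forcing, H\"older against $|t-s|^{-4/5}$ with sixth powers to get the $e^{CK^6t^2}$ Gronwall bound, and the same duality step into $H^{-1}$. Your extra remarks — testing against non-radial $f$ via the trace bound on $\mathbb S^{n-1}$, and finiteness of $K$ via the uniform boundedness principle — are harmless refinements of points the paper leaves implicit.
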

\begin{proof} 
By the Duhamel formula, we have
    \[
    \psi(t)-\phi(t)=e^{it\Delta}(\psi(0)-\phi(0))-i2\mu\int_0^t e^{i(t-s)\Delta}\delta_{r=1}\left[|\psi(s)|^2\psi(s)-|\phi(s)|^2\phi(s)\right]ds,
    \]
    and may therefore bound
    \[
    \|\chi(\psi(t)-\phi(t))\|_{L^\infty_{r,+}}\lesssim \|\chi e^{it\Delta}(\psi(0)-\phi(0))\|_{L^\infty_{r,+}} +\left\|\int_0^t \chi e^{i(t-s)\Delta}\delta_{r=1}\left[|\psi(s)|^2\psi(s)-|\phi(s)|^2\phi(s)\right]ds\right\|_{L^\infty_{r,+}}.
    \]
    For the first term, we use \eqref{highd-embed} to bound
    \begin{align*}
        \|\chi e^{it\Delta}(\psi(0)-\phi(0))\|_{L^\infty_{r,+} }
        \lesssim \|e^{it\Delta}(\psi(0)-\phi(0))\|_{H^1(\R^n)}\lesssim \|\psi(0)-\phi(0)\|_{H^1(\R^n)}.
    \end{align*}

    For the second term, we apply our dispersive estimate \eqref{dispersive-M-high} to get
    \begin{align*}
        &\left\|\int_0^t \chi e^{i(t-s)\Delta}\delta_{r=1}\left[|\psi(s)|^2\psi(s)-|\phi(s)|^2\phi(s)\right]ds\right\|_{L^\infty_{r,+} }\\
        &\quad \lesssim  \int_0^t|t-s|^{-\frac 23}\|\chi \delta_{r=1}\left[|\psi(s)|^2\psi(s)-|\phi(s)|^2\phi(s)\right]\|_{\M_{r,+} }ds\\
        &\quad \lesssim K\int_0^t|t-s|^{-\frac 23}\|\chi(\psi(s)-\phi(s))\|_{L^\infty_{r,+}  }ds\\
        &\quad \lesssim K\left(\int_0^t|t-s|^{-\frac 4 5}ds\right)^\frac{5}{6}\left(\int_0^t\|\chi(\psi(s)-\phi(s))\|_{L^\infty_{r,+} }^6ds\right)^\frac 1 6\\
         &\quad \lesssim Kt^{\frac 1 6}\left(\int_0^t\|\chi(\psi(s)-\phi(s))\|_{L^\infty_{r,+}  }^6ds\right)^\frac 1 6.
    \end{align*}
     Taking the 6\textsuperscript{th} power gives us
     \[
     \|\chi(\psi(t)-\phi(t))\|_{L^\infty_{r,+} }^{6}\lesssim\|\psi(0)-\phi(0)\|_{H^1}^{6}+K^{6}t\int_0^t\|\chi(\psi(s)-\phi(s))\|^{6}_{L^\infty_{r,+}  }ds,
     \]
     and the integral form of Gronwall's inequality yields
    \eq{half way to uniqueness today}{
      \|\chi(\psi(t)-\phi(t))\|_{L^\infty_{r,+}  }\lesssim e^{CK^{6}t^2}\|\psi(0)-\phi(0)\|_{H^1(\R^n)}.
     }

     Finally, from~\eqref{highd-embed} we have
     \begin{align*}
         \langle\delta_{r=1}(|\psi|^2\psi-|\phi|^2\phi),f\rangle&=(|\psi(1)|^2\psi(1)-|\phi(1)|^2\phi(1))\overline{f(1)}\\&\lesssim (|\psi(1)|^2+|\phi(1)|^2)|\psi(1)-\phi(1)||f(1)|\\&\lesssim K\|\chi(\psi-\phi)\|_{L_{r,+}^\infty}\|f\|_{H^1(\R^n)},
     \end{align*}
     so by duality and \eqref{half way to uniqueness today}, we have
    \begin{align*}
        \|\psi(t)-\phi(t)\|_{H^{-1}(\R^n)}&\lesssim \|\psi(0)-\phi(0)\|_{H^{-1}(\R^n)}+\int_0^t\|\delta_{r=1}(|\psi|^2\psi-|\phi|^2\phi)\|_{H^{-1}(\R^n)}ds\\
        &\lesssim \|\psi(0)-\phi(0)\|_{H^{-1}(\R^n)}+K\int_0^t\|\chi(\psi-\phi)\|_{L_{r,+}^\infty}ds\\
        &\lesssim (1+Kte^{CK^{6}t^2}) \|\psi(0)-\phi(0)\|_{H^1(\R^n)}.\qedhere
    \end{align*}
\end{proof}

We now aim to follow parallel approximation argument to before.  Consequently, we first need to consider the problem of the existence of regularized solutions.  We start with the following nonlinear estimate:

\begin{lem}\label{product-highd-radial}
Let \(n\geq2\), and let
\(\rho^\epsilon_{\mathbb S^{n-1}}\) be defined by
\eqref{mollification}.
For \(0<\epsilon\leq1/4\), the fixed cutoff \(\chi\) satisfies
\[
\chi\equiv1
\quad\textrm{on }\operatorname{supp}
\rho^\epsilon_{\mathbb S^{n-1}}.
\]
Then, for every integer \(m\geq1\) and radial functions
\(f_1,\ldots,f_m\in H^1(\R^n)\),
\begin{equation}
\label{radial-product}
\left\|\rho^\epsilon_{\bbS^{n-1}}\prod_{j=1}^m f_j\right\|_{H^1(\R^n)}\lesssim\|\rho^\epsilon_{\bbS^{n-1}}\|_{H^1_{r,+}}\prod_{j=1}^m\|f_j\|_{H^1(\R^n)}.
\end{equation}
\end{lem}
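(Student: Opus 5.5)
The plan is to reduce everything to one-dimensional estimates on the annulus \(\{\tfrac12<r<2\}\), where the radial \(n\)-dimensional measure \(r^{n-1}\,dr\) is comparable to \(dr\). First I will check the support claim. By \eqref{mollification}, \(\rho^\epsilon_{\bbS^{n-1}}(x)\neq0\) forces \(\operatorname{dist}(x,\bbS^{n-1})<\epsilon\), so \(\bigl||x|-1\bigr|<\epsilon\leq\tfrac14\) and \(|x|\in(\tfrac34,\tfrac54)\), where \(\chi\equiv1\). Because \(\rho\) is radial, \(\rho^\epsilon_{\bbS^{n-1}}\) is radial. Its profile is smooth and supported in \([\tfrac34,\tfrac54]\), so \(\|\rho^\epsilon_{\bbS^{n-1}}\|_{H^1_{r,+}}<\infty\).

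Next I write \(G:=\rho^\epsilon_{\bbS^{n-1}}\prod_j f_j=\rho^\epsilon_{\bbS^{n-1}}\prod_j(\chi f_j)\), using \(\chi\equiv1\) on the support. Since \(G\) is radial and supported in \(\tfrac34\le r\le\tfrac54\), I have \(\|G\|_{H^1(\R^n)}^2=\int(|G|^2+|\p_rG|^2)r^{n-1}dr\approx\|G\|_{H^1_{r,+}}^2\). The problem is therefore the one-dimensional estimate
\[
\Bigl\|\rho^\epsilon\prod_{j=1}^m \chi f_j\Bigr\|_{H^1_{r,+}}\lesssim\|\rho^\epsilon\|_{H^1_{r,+}}\prod_{j=1}^m\|\chi f_j\|_{H^1_{r,+}}.
\]
After extending by zero to \(\R\), this follows from the algebra property of \(H^1(\R)\) in \eqref{eq:algebraic} with \(n=1\), or equivalently from \(H^1_0(\R^+)\), applied \(m\) times.

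It remains to show \(\|\chi f\|_{H^1_{r,+}}\lesssim\|f\|_{H^1(\R^n)}\) for radial \(f\), which is the argument of \eqref{highd-embed}. For the \(L^2\) part, \(\int_{1/2}^2|f|^2dr\lesssim\int_{1/2}^2|f|^2r^{n-1}dr\le\|f\|_{L^2(\R^n)}^2\). For the derivative, the product rule gives \(\p_r(\chi f)=\chi'f+\chi\p_rf\), and \(|\p_rf|=|\nabla f|\) for radial \(f\), so \(\|\p_r(\chi f)\|_{L^2_{r,+}}\lesssim\|f\|_{L^2(\R^n)}+\|\nabla f\|_{L^2(\R^n)}\). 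The same weight-comparison holds because the support lies away from \(r=0\).

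The only delicate point is justifying that the radial profile of an \(H^1(\R^n)\) function is in \(H^1_{\mathrm{loc}}\) of \((0,\infty)\) with \(\p_r f\) equal to \(\nabla f\cdot\tfrac{x}{|x|}\). I will handle this by density of smooth radial functions and the uniform bounds above, which pass to the limit. The constants depend on \(m\) and \(\chi\), but not on \(\epsilon\).
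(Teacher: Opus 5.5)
Your proof is correct and follows essentially the same route as the paper: you reduce to the one-dimensional \(H^1_{r,+}\) norm on the annulus where \(r^{n-1}\approx 1\), and you control each factor \(\chi f_j\) by \(\|f_j\|_{H^1(\R^n)}\) as in \eqref{highd-embed}. The only difference is cosmetic: you invoke the algebra property of \(H^1(\R)\), whereas the paper expands the product rule by hand and applies H\"older with the \(L^\infty_{r,+}\) bounds on \(\chi f_j\) and \(\rho^\epsilon_{\bbS^{n-1}}\), which is exactly how that algebra property is proved.
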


\begin{proof}
For radial functions, \eqref{highd-embed}, and the fact that
\(r^{n-1}\approx_n1\) on \(\operatorname{supp}\chi\) give us
\[
\|\chi f_j\|_{L^\infty_{r,+}}
+
\|\chi\partial_r f_j\|_{L^2_{r,+}}
\lesssim_{n,\chi}
\|f_j\|_{H^1(\R^n)},
\qquad 1\leq j\leq m.
\]
Moreover, \eqref{highd-embed} also yields
\[
\|\rho^\epsilon_{\bbS^{n-1}}\|_{L^\infty_{r,+}}
\lesssim
\|\rho^\epsilon_{\bbS^{n-1}}\|_{H^1_{r,+}}.
\]

Since \(\chi\equiv1\) on the support of the \(\rho^\epsilon_{\bbS^{n-1}}\), the product rule and Hölder's inequality yield
\[
\begin{aligned}
\left\|\rho^\epsilon_{\bbS^{n-1}}\prod_{j=1}^m f_j
\right\|_{H^1(\R^n)}&\lesssim_n\left\|\rho^\epsilon_{\bbS^{n-1}}\prod_{j=1}^m f_j\right\|_{H^1_{r,+}}\\
&\lesssim\|\rho^\epsilon_{\bbS^{n-1}}\|_{H^1_{r,+}}\prod_{j=1}^m\|\chi f_j\|_{L^\infty_{r,+}}
+\|\rho^\epsilon_{\bbS^{n-1}}\|_{L^\infty_{r,+}}\sum_{j=1}^m\|\chi\partial_r f_j\|_{L^2_{r,+}}\prod_{\substack{1\leq k\leq m\\k\neq j}}\|\chi f_k\|_{L^\infty_{r,+}}\\
&\lesssim\|\rho^\epsilon_{\bbS^{n-1}}\|_{H^1_{r,+}}\prod_{j=1}^m\|f_j\|_{H^1(\R^n)},
\end{aligned}
\]
as required.
\end{proof}

For each fixed sufficiently small \(\epsilon>0\),
Lemma~\ref{product-highd-radial} implies that the nonlinearity in Duhamel formula is locally Lipschitz on the closed subspace of radial \(H^1(\R^n)\) functions. Standard semilinear well-posedness theory, based on the contraction
mapping principle, yields a unique maximal radial solution
\[
\psi^\epsilon\in C\bigl(I;H^1(\R^n)\bigr)
\]
of \eqref{nls-epsilon} for every radial initial datum
\(\psi_0\in H^1(\R^n)\), where \(I\) is an open interval containing zero; see \cite{cazenave2003semilinear}.

Taking \(V_\epsilon\) to be defined as in \eqref{V-epsilon definition} with \(\Sigma = \bbS^{n-1}\), we apply \eqref{highd-embed} to compute that
\[
\begin{aligned}\label{V4estimateshighd}
V_\epsilon(u)
&\leq
\|\chi u\|_{L^\infty_{r,+}}^4
\int_{\mathbb R^n}\rho^\epsilon_{\mathbb S^{n-1}}(x)\,dx\\
&=
|\mathbb S^{n-1}|\|\chi u\|_{L^\infty_{r,+}}^4
\\
&\lesssim \|u\|_{H^1(\mathbb R^n)}^4,
\end{aligned}
\]
uniformly for all \(0<\epsilon\leq \frac14\). This shows that we may apply Lemma~\ref{lem:uniform-regularized-control} to deduce that \(I = \R\) in the defocusing case, or the focusing case with small data, and obtain the uniform bound
\eq{oct 15}{
A: = \sup_{0<\epsilon\leq\frac14}\sup_{t\in\R}
\|\psi^\epsilon(t)\|_{H^1(\R^2)}<\infty.
}

We now prove the existence of solutions:

\begin{prop}
    Given radial \(\psi_0\in H^1(\R^n)\), there exists a time \(T=T(\|\psi_0\|_{H^1(\R^n)})>0\) and a solution \(\psi\in C([-T,T];H^1(\R^n))\) to the Duhamel formulation \eqref{eq:Dulm} of \eqref{nls-S} in the defocusing case, or in the focusing case when \(\|\psi_0\|_{H^1(\R^n)}\) is sufficiently small. Moreover, if \(\psi^\epsilon\) is a family of solutions to \eqref{nls-epsilon} with \(\psi^\epsilon(0)=\psi_0\in H^1(\R^n)\), then
    \[
    \lim_{\epsilon\to 0}\sup_{|t|\leq T}\|\psi^{\epsilon}(t)-\psi(t)\|_{H^1(\R^n)}=0
    \]
\end{prop}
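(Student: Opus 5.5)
We follow the proofs of Propositions~\ref{p:10-15} and~\ref{GlobalS1}, with the anisotropic space $L^\infty_{r,+}L^4_\theta$ replaced by $L^\infty_{r,+}$. Radiality removes the angular variable, so there is no $P^\theta$ decomposition. Fix $0<\epsilon<\epsilon'\leq\frac14$ and recall the uniform bound $A<\infty$ from \eqref{oct 15}. Since the initial data agree, we write
\[
\chi(\psi^\epsilon-\psi^{\epsilon'})=-i2\mu\int_0^t\chi e^{i(t-s)\Delta}\bigl(F_1(s)+F_2(s)\bigr)\,ds,
\]
with
\[
F_1=\rho^\epsilon(|\psi^\epsilon|^2\psi^\epsilon-|\psi^{\epsilon'}|^2\psi^{\epsilon'}),\qquad F_2=(\rho^\epsilon-\rho^{\epsilon'})|\psi^{\epsilon'}|^2\psi^{\epsilon'}.
\]
Both $F_1$ and $F_2$ are supported where $\chi\equiv1$.

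The first step is to prove that $\chi\psi^\epsilon$ is Cauchy in $C([-T,T];L^\infty_{r,+})$. We split with $P^r_{>N}E$ and $P^r_{\leq N}E$.

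\emph{High frequencies.} By \eqref{bernie up high} with $s=1$ and \eqref{highd-embed}, this piece is $\lesssim N^{-1/2}\|\chi(\psi^\epsilon-\psi^{\epsilon'})\|_{H^1_{r,+}}\lesssim N^{-1/2}A$.

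\emph{The $F_1$ term at low frequency.} We drop $P^r_{\leq N}$ and apply \eqref{dispersive-C-high}. Together with $\|\rho^\epsilon\|_{L^1_{r,+}}\lesssim1$, this gives a bound
\[
\lesssim T^{1/3}A^2\sup_{|t|\leq T}\|\chi(\psi^\epsilon-\psi^{\epsilon'})\|_{L^\infty_{r,+}}.
\]

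\emph{The $F_2$ term at low frequency.} We first show $\|\rho^\epsilon-\rho^{\epsilon'}\|_{H^{-3/4}_{r,+}}\lesssim|\epsilon-\epsilon'|^{1/4}$, by the same duality computation as \eqref{phodiffS}. The only change is that the Jacobian factor $|y+\epsilon z|^{-1}$ becomes $|y+\epsilon z|^{-(n-1)}$, which is Lipschitz on the annulus. Next we use \eqref{eq:dualalgebraic1d} on $\R^+$ and \eqref{highd-embed} to get
\[
\|F_2\|_{H^{-3/4}_{r,+}}\lesssim|\epsilon-\epsilon'|^{1/4}\|\chi\psi^{\epsilon'}\|^3_{H^{3/4}_{r,+}}\lesssim|\epsilon-\epsilon'|^{1/4}A^3.
\]
By the radial analogue of \eqref{rdr=dr} (the weight $r^{n-1}$ is comparable to $1$ on the support of $\chi$), this bound controls $\|F_2\|_{H^{-3/4}(\R^n)}$. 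Unitarity of $e^{it\Delta}$ on $H^{-3/4}(\R^n)$ and boundedness of multiplication by $\chi$ then bound the Duhamel integral by $T|\epsilon-\epsilon'|^{1/4}A^3$ in $H^{-3/4}_{r,+}$. Finally \eqref{bernie down high} upgrades this at a cost of $N^{5/4}$.

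Choosing $TA^6\ll1$, letting $\epsilon,\epsilon'\to0$ and then $N\to\infty$ gives the Cauchy property.

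Next we show convergence in $C([-T,T];H^{-1}(\R^n))$. For the $F_1$ term, the dual of \eqref{highd-embed} gives $\|\chi g\|_{H^{-1}(\R^n)}\lesssim\|\chi g\|_{\M_{r,+}}$, so this term is bounded by $TA^2\sup_t\|\chi(\psi^\epsilon-\psi^{\epsilon'})\|_{L^\infty_{r,+}}$. The $F_2$ term is bounded by $T|\epsilon-\epsilon'|^{1/4}A^3$ via the $H^{-3/4}$ bound above. Combining this with the uniform $H^1$ bound gives convergence in $C([-T,T];H^1_w)$, exactly as in \eqref{weakconvergence}. Passing to the limit as in \eqref{identification}, we identify the limit $\psi$ as a radial Duhamel solution. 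It is unique by Proposition~\ref{k-estimate-high}.

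For norm convergence we repeat the energy argument of Proposition~\ref{p:10-15}. We need the analogue of Lemma~\ref{lem:Aepsilon} for $\bbS^{n-1}$ and radial data, namely $|V_\epsilon(u)-V_0(u)|\to0$ uniformly on bounded radial sets. This follows from the $C^{0,1/2}$ modulus of $\chi u$ given by the one-dimensional $H^1$ bound. We also need $|V_0(\psi^\epsilon)-V_0(\psi)|\lesssim A^3\|\chi(\psi^\epsilon-\psi)\|_{L^\infty_{r,+}}\to0$. With these, Lemma~\ref{Conservation-limiting} (with Proposition~\ref{k-estimate-high} supplying uniqueness) yields conservation of mass and energy. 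Convergence of the energies then gives $\|\psi^\epsilon(t)\|_{H^1}\to\|\psi(t)\|_{H^1}$ uniformly in $t$, and Radon--Riesz upgrades weak convergence to convergence in $C([-T,T];H^1)$.

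The main obstacle is the $F_2$ step. We must make sure that passing between $H^{-3/4}_{r,+}$ and $H^{-3/4}(\R^n)$ for functions localized by $\chi$ is legitimate, and that the commutator with $\chi$ after the propagator is harmless. If the interpolation behind \eqref{rdr=dr} is delicate in dimension $n$, the fallback is to work directly in $H^{-1}$ with $s=1$ duality and then interpolate.
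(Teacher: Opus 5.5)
Your proposal is correct and follows the paper's proof essentially step for step. The shared steps are:
\begin{enumerate}
\item the $F_1/F_2$ splitting with the $P^r_{\le N}/P^r_{>N}$ decomposition;
\item the weighted $|t|^{-2/3}$ dispersive estimate for $F_1$, which forces $TA^6\ll1$;
\item the $H^{-3/4}_{r,+}$ bound on $\rho^\epsilon-\rho^{\epsilon'}$, combined with the low-frequency Bernstein inequality at cost $N^{5/4}$, for $F_2$;
\item convergence in $H^{-1}$ and in the weak topology, identification of the limit, and uniqueness;
\item norm convergence via convergence of the potential energy and Radon--Riesz.
\end{enumerate}

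Two details of yours are, if anything, slightly more accurate than the paper's text. You track the Jacobian factor $|y+\epsilon z|^{-(n-1)}$ explicitly. You also cite Proposition~\ref{k-estimate-high}, rather than Proposition~\ref{p:10-15-2}, for uniqueness.
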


\begin{proof}
    Taking \(0<\epsilon<\epsilon'\leq \frac14\), we use the Duhamel formulation \eqref{eq:Dulm} to write
    \[
    \psi^{\epsilon}-\psi^{\epsilon'}=e^{it\Delta}(\psi^\epsilon(0)-\psi^{\epsilon'}(0))- i2\mu\int_0^t e^{i(t-s)\Delta}\left(F_1(s)-F_2(s)\right)ds
    \]
    where
    \[
F_1:=\rho^\epsilon_{\bbS^{n-1}}\left(|\psi^{\epsilon}|^2\psi^{\epsilon}-|\psi^{\epsilon'}|^2\psi^{\epsilon'}\right)\qtq{and}
F_2:=\left(\rho^\epsilon_{\bbS^{n-1}}-\rho^{\epsilon'}_{\bbS^{n-1}}\right)|\psi^{\epsilon'}|^2\psi^{\epsilon'}.
\]
Since the initial data cancels, this simplifies to
\[
\chi(\psi^{\epsilon}-\psi^{\epsilon'})=-i2\mu\int_0^t \chi e^{i(t-s)\Delta}\left(F_1(s)-F_2(s)\right)ds.
\]

First, we will show that
\begin{equation}\label{cauchy-nr-highd}
\lim_{\epsilon,\epsilon'\to 0}\sup_{|t|\leq T}\|\chi(\psi^\epsilon-\psi^{\epsilon'})\|_{L^\infty_{r,+}}=0.
\end{equation}
We start by using our Littlewood--Paley projections to bound
\begin{align*}
    \sup_{|t|\leq T}\|\chi(\psi^\epsilon-\psi^{\epsilon'})\|_{L^\infty_{r,+} }&\lesssim \sup_{|t|\leq T}\|P^r_{> N}E\chi(\psi^\epsilon-\psi^{\epsilon'})\|_{L^\infty_r } +\sup_{|t|\leq T}\|P^r_{\leq N}E\chi(\psi^\epsilon-\psi^{\epsilon'})\|_{L^\infty_r}.
\end{align*}
 For the high frequencies in \(r\), we use \eqref{bernie up high} to estimate
 \begin{align*}
     \sup_{|t|\leq T}\|P^r_{> N}E\chi(\psi^\epsilon-\psi^{\epsilon'})\|_{L^\infty_r}\lesssim N^{-\frac 1 2}\sup_{|t|\leq T}\|P^r_{> N}\chi(\psi^\epsilon-\psi^{\epsilon'})\|_{H^1_r}\lesssim N^{-\frac 1 2} A,
 \end{align*}
 where \(A\) is defined as in \eqref{oct 15}. 

 For low frequencies, we have
\begin{align*}
    \sup_{|t|\leq T}\|P^r_{\leq N}E\chi(\psi^\epsilon-\psi^{\epsilon'})\|_{L^\infty_r }&\lesssim\underbrace{\sup_{|t|\leq T}\|P^r_{\leq N}\int_0^tE\chi e^{i(t-s)\Delta}F_1(s)ds\|_{L^\infty_r}}_{=:I}\\&\quad +\underbrace{\sup_{|t|\leq T}\|P^r_{\leq N}\int_0^tE\chi e^{i(t-s)\Delta}F_2(s)ds\|_{L^\infty_r }}_{=:II}.
\end{align*}

For I, we apply \eqref{dispersive-C-high} and \eqref{highd-embed} to bound
\begin{align*}
    \sup_{|t|\leq T}\|P_{\leq N}^r\int_0^tE\chi e^{i(t-s)\Delta}F_1(s)ds\|_{L^\infty_r }&\lesssim \sup_{|t|\leq T}\int_0^t |t-s|^{-\frac 2 3}\|\chi F_1(s)\|_{ L^1_{r,+}}ds\\
    &\lesssim T^{\frac 1 3}\sup_{|t|\leq T}\| \chi\rho^{\epsilon}_{\bbS^{n-1}}\left(|\psi^{\epsilon}|^2\psi^{\epsilon}-|\psi^{\epsilon'}|^2\psi^{\epsilon'}\right)\|_{L^1_{r,+} }\\
    &\lesssim T^\frac 1 3 A^2\sup_{|t|\leq T}\|\chi(\psi^\epsilon-\psi^{\epsilon'})\|_{L^\infty_{r,+} }.
\end{align*}

For II, from a parallel argument to \eqref{phodiffS} in Section \ref{sec:radial}, we conclude that

\[
\|\rho^\epsilon_{\bbS^{n-1}}-\rho^{\epsilon'}_{\bbS^{n-1}}\|_{H^{-\frac 34}_{r,+}}\lesssim |\epsilon-\epsilon'|^\frac 14.
\]
We then apply \eqref{eq:dualalgebraic1d} to conclude that
\begin{align*}
    \sup_{|t|\leq T}\|P_{\leq N}^r\int_0^tE\chi e^{i(t-s)\Delta}F_2(s)ds\|_{ H^{-\frac 3 4}_r}
    &\lesssim \sup_{|t|\leq T}\left|\int_0^t\|\chi e^{i(t-s)\Delta}F_2(s)\|_{ H^{-\frac 3 4}(\R^n)}ds\right|\\
    &\lesssim T\sup_{|t|\leq T}\|\chi\left(\rho^{\epsilon}_{\bbS^{n-1}}-\rho^{\epsilon'}_{\bbS^{n-1}}\right)|\psi^{\epsilon'}|^2\psi^{\epsilon'}\|_{ H^{-\frac 3 4}(\R^n)}\\
    &\lesssim  T\sup_{|t|\leq T}\|\rho^\epsilon_{\bbS^{n-1}}-\rho^{\epsilon'}_{\bbS^{n-1}}\|_{H^{-\frac 34}_{r,+}}\|\chi \psi^{\epsilon'}\|^3_{H^{\frac 3 4}_{r,+}}\\
    &\lesssim  T|\epsilon-\epsilon'|^\frac 14 A^3.
\end{align*}
Applying \eqref{bernie down high}, we then have
\[
\sup_{|t|\leq T}\|P_{\leq N}^r\int_0^tE\chi e^{i(t-s)\Delta}F_2(s)ds\|_{L^\infty_r }\lesssim  N^{\frac54}\sup_{|t|\leq T}\|P_{\leq N}^r\int_0^tE\chi e^{i(t-s)\Delta}F_2(s)ds\|_{ H^{-\frac 3 4}_r}\lesssim N^{\frac 54}T|\epsilon-\epsilon'|^\frac 14 A^3.
\]

 Combining these estimates, we get
 \[
  \sup_{|t|<T}\|\chi(\psi^\epsilon-\psi^{\epsilon'})\|_{L^\infty_r}\lesssim  N^{-\frac 1 2} A+T^\frac 1 3 A^2\sup_{|t|\leq T}\|\chi(\psi^\epsilon-\psi^{\epsilon'})\|_{L^\infty_r }+N^{\frac 54}T|\epsilon-\epsilon'|^\frac 14 A^3.
 \]

 Choosing \(T\) sufficiently small to ensure that
 \[
 TA^{6}\ll1,
 \]
and taking \(\epsilon,\epsilon'\to 0\), we have
\[
\limsup_{\epsilon,\epsilon'\to 0}\sup_{|t|\leq T}\|\chi(\psi^\epsilon-\psi^{\epsilon'})\|_{L^\infty_r}\lesssim N^{-\frac 1 2} A.
 \]
Finally, we take the limit as \(N\to\infty\), to show that \(\chi\psi^\epsilon\) converges in \(C([-T,T];L^\infty_{r,+})\).

Arguing precisely as in the proof of Propositions~\ref{p:10-15} and~\ref{GlobalS1}, we may combine this convergence with the uniform \(H^1\)-bound to conclude that \(\psi^\epsilon\to \psi\) in \(C([-T,T];H^{-1}(\R^n))\) and therefore \(C([-T,T];H_w^1(\R^n))\).  Moreover, an essentially identical argument to \eqref{identification} shows that \(\psi\) solves the Duhamel formulation \eqref{eq:Dulm} of \eqref{nls-sigma} with \(\Sigma = \bbS^{n-1}\) and is therefore unique by Proposition~\ref{p:10-15-2}.

To complete the proof, we continue to argue as in Proposition~\ref{p:10-15} and~\ref{GlobalS1} to prove norm convergence.  As before, it suffices to prove that the the potential energies converge as in \eqref{pot energy converges}.  We again decompose the potential energies as in \eqref{ND0}, to obtain
         \begin{equation}
            \begin{aligned}\label{ND_r}
            \left|V_\epsilon(\psi^\epsilon(t))-V_0(\psi(t))\right|\leq
            \left| V_\epsilon(\psi^\epsilon(t))- V_0(\psi^\epsilon(t)) \right| 
            +\left|V_0(\psi^\epsilon(t)) - V_0(\psi(t))\right|,
            \end{aligned}
        \end{equation}
        where \(V_\epsilon\) and \(V_{0}\) are defined as before, but now with $\Sigma = \bbS^{n-1}$.  We now proceed to bound each part and using parallel arguments to Proposition \ref{GlobalS1}.

        The main obstacle in higher dimensions is that the trace operator is only bounded from \(H^{\frac{n+1}4}(\R^n)\to L^4(\bbS^{n-1})\).  As we wish to rely on our uniform \(H^1\)-bounds, there is no hope of controlling the potential energy when \(n\geq 4\) without also using our hypothesis of radiality.  Fortunately, for radial solutions we can use the Sobolev estimate \eqref{highd-embed} in place of the trace estimate.  In dimension \(n=3\), the \(H^1\)-norm just suffices to control the trace; the preceding existence argument is the main obstruction to handling non-radial data in this case.

        For the first term on the right hand side of \eqref{ND_r}, proceeding as in Lemma \ref{lem:Aepsilon} it suffices to estimate
        \[
        \left| \int_\Sigma \left(|\tau_{\epsilon z}\psi^\epsilon|^4 - |\psi^\epsilon|^4\right) dS \right|\lesssim\left( \|\tau_{\epsilon z}\psi^\epsilon\|_{L^4(\Sigma)}^3 + \|\psi^\epsilon\|_{L^4(\Sigma)}^3 \right) \|\tau_{\epsilon z}\psi^\epsilon - \psi^\epsilon\|_{L^4(\Sigma)},
        \]
        where $\Sigma = \bbS^{n-1}$. Applying \eqref{highd-embed}, for \(|z|\leq 1\) and \(0<\epsilon \leq\frac14\) we have
        \[
        \|\tau_{\epsilon z}\psi^\epsilon\|_{L^4(\Sigma )}^4 = \int_{\Sigma} |\psi^\epsilon(y + \epsilon z)|^4 dS(y) 
        \lesssim \|\chi \psi^\epsilon\|_{L^\infty_{r,+}}^4 \lesssim \|\psi^\epsilon\|_{H^1(\R^n)}^4.
        \]

        Also note that,
        \[
        \|\tau_{\epsilon z}\psi^\epsilon - \psi^\epsilon\|^4_{L^4(\Sigma)} =\int_{\Sigma}|\psi^\epsilon(y+\epsilon z) - \psi^\epsilon(y)|^{4}dS(y),
        \]
        where the Fundamental Theorem of Calculus proves that if \(|z|\leq 1\) and \(0<\epsilon \leq\frac14\) then
        \begin{align*}
        \left|\psi^\epsilon(y+\epsilon z)-\psi^\epsilon(y)\right|
        &\leq
        \left|\,|y+\epsilon z|-|y|\,\right|^{1/2}
        \left\|\chi \partial_r \psi^\epsilon\right\|_{L_{r,+}^2} \\
        &\leq
        |\epsilon|^{1/2}|z|^{1/2}
        \left\|\chi \partial_r \psi^\epsilon\right\|_{L_{r,+}^2} \\
        & \lesssim  |\epsilon|^{1/2}\|\psi^\epsilon\|_{H^1(\R^n)}.
        \end{align*}
        Consequently,
        \[
        \begin{aligned}
        \left\|
        \tau_{\epsilon z}\psi^\epsilon(t)-\psi^\epsilon(t)
        \right\|_{L^4(\Sigma)}^4 \lesssim
        \epsilon^2
        \left\|\psi^\epsilon(t)\right\|_{H^1(\R^n)}^4.
        \end{aligned}
        \]
        Combining these estimates, we obtain
        \[
        \sup_{|t|\leq T}\left|V_\epsilon(\psi^\epsilon(t))-V_0(\psi^\epsilon(t))\right| \lesssim \epsilon^2A^4.
        \]

        For the second term, we simply bound
        \begin{align*}
        \left|V_0(\psi^\epsilon) - V_0(\psi)\right| &\lesssim \left( \|\psi^\epsilon\|_{L^4(\Sigma)}^3 + \|\psi\|_{L^4(\Sigma)}^3 \right) \|\psi^\epsilon - \psi\|_{L^4(\Sigma)}\lesssim A^3 \|\chi(\psi^\epsilon - \psi)\|_{L_{r,+}^\infty}.
        \end{align*}
        
        Taking $\epsilon \to 0$, we conclude that
        \[
        \lim_{\epsilon\to 0 }\sup_{|t|\leq T}\left|V_\epsilon(\psi^\epsilon(t))-V_{0}(\psi(t))\right| \to 0.
        \]

        The proof of norm convergence is now completed using an identical argument to Propositions~\ref{p:10-15} and~\ref{GlobalS1}, which allows us to upgrade convergence in \(C([-T,T];H_w^1(\R^n))\) to convergence in \(C([-T,T];H^1(\R^n))\) using Radon--Riesz, completing the proof.
\end{proof}

Combining our local existence and uniqueness with an identical boostrap argument to Lemma \ref{lem:uniform-regularized-control} proves the following:

\begin{cor}
    Let  \(n\geq 3\), there exists a unique strong global solution \(\psi\in C(\R;\,H^1(\R^n))\) to \eqref{nls-S} with initial data $\psi_0\in H^1(\R^n)$ if either of the following two conditions holds:

(i)  \eqref{nls-S} is defocusing;

(ii) \(\|\psi_0\|_{H^1(\R^n)}\) is sufficiently small.

This solution conserves both the mass and the energy.

\end{cor}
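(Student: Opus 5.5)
The global statement follows from the local theory of the preceding proposition, iterated using conserved quantities. The plan has three steps.

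First, fix radial \(\psi_0\in H^1(\R^n)\). Let \(\psi\) be the local solution on \([-T,T]\), with \(T=T(\|\psi_0\|_{H^1})\), obtained as the \(H^1\)-limit of the regularized solutions \(\psi^\epsilon\). This solution is radial, since each \(\psi^\epsilon\) is radial and radial functions form a closed subspace. It is unique by Proposition~\ref{k-estimate-high}.

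Second, establish conservation of mass and energy on \([-T,T]\). Exactly as in Proposition~\ref{p:10-15}, weak convergence in \(H^1\) together with convergence of the potential energies gives
\[
M(\psi(t))\leq M(\psi_0),\qquad E(\psi(t))\leq E(\psi_0).
\]
The same holds for the solution launched from \(\overline{\psi(t_0)}\), which is again radial. Running the time-reversal argument of Lemma~\ref{Conservation-limiting}, with Proposition~\ref{k-estimate-high} supplying uniqueness in place of Proposition~\ref{K-estimate}, upgrades these inequalities to equalities.

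Third, obtain an a priori \(H^1\) bound and extend globally. By \eqref{highd-embed},
\[
V_0(u)\lesssim\|\chi u\|_{L^\infty_{r,+}}^4\lesssim\|u\|_{H^1(\R^n)}^4
\]
for radial \(u\). The argument of Lemma~\ref{lem:uniform-regularized-control}, with \(V_\epsilon\) replaced by \(V_0\), then gives
\[
\sup_t\|\psi(t)\|_{H^1}^2\lesssim\|\psi_0\|_{H^1}^2+\|\psi_0\|_{H^1}^4
\]
in the defocusing case. In the focusing case with small data, a continuity bootstrap gives \(\sup_t\|\psi(t)\|_{H^1}^2\leq 2\|\psi_0\|_{H^1}^2\). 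Since the local time \(T\) depends only on the \(H^1\)-norm, which stays bounded by a constant \(B\), we can restart at times \(\pm kT(B)\), and uniqueness glues the pieces into a global radial solution in \(C(\R;H^1)\).

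The main obstacle is the focusing bootstrap. It needs continuity of \(t\mapsto\|\psi(t)\|_{H^1}\) along the glued solution, which follows because each local piece lies in \(C([-T,T];H^1)\). It also needs the smallness threshold \(16C\delta^2\leq1\) to hold uniformly, which is fine because the constant \(C\) in the radial trace bound \(V_0\lesssim\|\cdot\|_{H^1}^4\) is absolute.

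Continuity of the data-to-solution map then follows verbatim from Proposition~\ref{Ctsdependence}. Lipschitz dependence in \(H^{-1}\) comes from Proposition~\ref{k-estimate-high}, interpolated with the uniform \(H^1\) bound. This makes \(\chi\psi_n\to\chi\psi\) in \(L^\infty_{r,+}\), using \eqref{highd-embed} on \(H^{s}\) with \(s>\frac12\) after restricting to the annulus. That convergence gives convergence of \(V_0\), hence of the \(H^1\)-norms, and the final compactness argument in that proof completes it.
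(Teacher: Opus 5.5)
Your proposal is correct and takes the same route as the paper. The paper obtains the corollary by combining three ingredients:
\begin{itemize}
\item local existence together with radial uniqueness (Proposition~\ref{k-estimate-high});
\item conservation of mass and energy, via the time-reversal argument of Lemma~\ref{Conservation-limiting};
\item the bootstrap of Lemma~\ref{lem:uniform-regularized-control} with \(V_\epsilon\) replaced by \(V_0\), using \eqref{highd-embed} in place of the trace bound, followed by iteration.
\end{itemize}
In the focusing case you should state explicitly that the smallness threshold is chosen so that \(\sqrt2\,\|\psi_0\|_{H^1}\) still lies within the small-data regime of the local theory; this makes every restart at \(\pm kT\) admissible.
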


To complete the proof of Theorem~\ref{thrm:radial} in the case \(\Sigma = \bbS^{n-1}\), it remains to prove that the solution map is continuous:

\begin{prop}[Continuous dependence on initial data]
Let \(n\geq3\) and \(\Sigma=S^{n-1}\). Suppose that
\(\psi_{k,0},\psi_0\in H^1(\R^n)\) are radial and
\[
\psi_{k,0}\longrightarrow\psi_0
\qquad\text{in }H^1(\R^n).
\]
Let \(\psi_k,\psi\in C(\R;H^1(\R^n))\) be the corresponding radial
solutions of \eqref{nls-sigma}, in the defocusing case or in the
focusing case with all initial data sufficiently small.
Then, for every \(T>0\),
\[
\lim_{k\to\infty}
\sup_{|t|\leq T}
\|\psi_k(t)-\psi(t)\|_{H^1(\R^n)}
=0.
\]
\end{prop}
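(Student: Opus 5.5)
We follow the proof of Proposition~\ref{Ctsdependence}, replacing the trace theorem by the radial embedding \eqref{highd-embed} and the Gronwall bound of Proposition~\ref{k-estimate-high}.

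\emph{Uniform bound.} Conservation of mass and energy, the bound $V_0(u)\lesssim \|\chi u\|_{L^\infty_{r,+}}^4\lesssim\|u\|_{H^1(\R^n)}^4$ for radial $u$ (from \eqref{highd-embed}), and the bootstrap of Lemma~\ref{lem:uniform-regularized-control} with $V_\epsilon$ replaced by $V_0$ give some $A<\infty$ with
\[
\sup_k\sup_{|t|\leq T}\|\psi_k(t)\|_{H^1}+\sup_{|t|\leq T}\|\psi(t)\|_{H^1}\leq A.
\]
In the focusing case we use that $\|\psi_{k,0}\|_{H^1}\to\|\psi_0\|_{H^1}$, so all data are uniformly small. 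Hence the constant $K$ in Proposition~\ref{k-estimate-high} is bounded by a multiple of $A^2$ uniformly in $k$.

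\emph{Weak-norm convergence.} Apply Proposition~\ref{k-estimate-high} on $[-T,T]$ to the pair $\psi_k,\psi$. The intermediate estimate \eqref{half way to uniqueness today} gives
\[
\sup_{|t|\leq T}\|\chi(\psi_k-\psi)\|_{L^\infty_{r,+}}\lesssim e^{CA^{12}T^2}\|\psi_{k,0}-\psi_0\|_{H^1}\to0,
\]
and \eqref{eq:Gronwell-S2} gives $\sup_{|t|\leq T}\|\psi_k-\psi\|_{H^{-1}}\to0$. Since $\chi\equiv1$ near $r=1$, this yields
\[
\sup_{|t|\leq T}|V_0(\psi_k(t))-V_0(\psi(t))|\lesssim A^3\sup_{|t|\leq T}\|\chi(\psi_k-\psi)\|_{L^\infty_{r,+}}\to0.
\]
Unlike the $n=2$ case, no interpolation to $H^{3/4}$ is needed: the $L^\infty$ control on the annulus already controls the trace.

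\emph{Norm convergence.} Write
\[
\|\psi_k(t)\|_{H^1}^2=2M[\psi_{k,0}]+2E[\psi_{k,0}]-\mu V_0(\psi_k(t)).
\]
The data terms converge because $M$ and $E$ are continuous on radial $H^1$, again by \eqref{highd-embed}. Together with the convergence of $V_0$ above, this gives
\[
\sup_{|t|\leq T}\bigl|\|\psi_k(t)\|_{H^1}^2-\|\psi(t)\|_{H^1}^2\bigr|\to0.
\]
Next use the identity
\[
\|\psi_k-\psi\|_{H^1}^2=\|\psi_k\|_{H^1}^2-\|\psi\|_{H^1}^2-2\operatorname{Re}\langle(1-\Delta)\psi,\psi_k-\psi\rangle.
\]
Since $\{(1-\Delta)\psi(t):|t|\leq T\}$ is compact in $H^{-1}(\R^n)$, the frequency split at $P_{\leq N}$ from Proposition~\ref{Ctsdependence} applies. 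The high-frequency piece is bounded by $\sup_t\|P_{>N}(1-\Delta)\psi(t)\|_{H^{-1}}\,2A$, and the low-frequency piece by $N^2A\sup_t\|\psi_k-\psi\|_{H^{-1}}$. Letting $k\to\infty$ and then $N\to\infty$ completes the argument.

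The main obstacle is the uniformity of $K$, which is needed so that the Gronwall constant does not depend on $k$. In the focusing case this depends on the small-data threshold being uniform along the sequence. Everything else is a direct transcription of the $n=2$ argument.
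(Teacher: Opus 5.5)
Your proposal is correct and follows the paper's proof step for step: the uniform $H^1$ bound from the bootstrap with $V_0$, the Gronwall control from Proposition~\ref{k-estimate-high}, convergence of the potential energies, and then the Hilbert-space identity together with compactness of $\{(1-\Delta)\psi(t)\}$ in $H^{-1}$. The one local difference is that you read off $\sup_{|t|\le T}\|\chi(\psi_k-\psi)\|_{L^\infty_{r,+}}\to0$ directly from the intermediate bound \eqref{half way to uniqueness today}, whereas the paper deduces it from $H^{-1}$ convergence via the interpolation $\|\chi f\|_{L^\infty_{r,+}}\lesssim\|\chi f\|_{H^{-1}_{r,+}}^{1/4}\|\chi f\|_{H^{1}_{r,+}}^{3/4}$ and \eqref{rdr=dr}; your shortcut is slightly cleaner, avoids extending \eqref{rdr=dr} to dimension $n$, and gives a Lipschitz rate (the same shortcut would also have worked for $n=2$, so it is not really a dimensional distinction).
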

\begin{proof}
We follow the same argument as in Proposition~\ref{Ctsdependence}. Fix \(T>0\). Conservation of mass and energy, with the estimate \eqref{highd-embed} in place of the trace estimate, allows us to apply the argument of
Lemma~\ref{lem:uniform-regularized-control}, with \(V_\epsilon\)
replaced by \(V_0\) and \(\Sigma = \bbS^{n-1}\), to obtain a uniform \(H^1\)-bound  for \(\psi_k\) and \(\psi\).

The Lipschitz estimate~\eqref{eq:Gronwell-S2} yields
\[
\|\psi_k-\psi\|_{C_TH^{-1}(\R^n)}\to 0.
\]

In place of the trace inequality used in Proposition~\ref{Ctsdependence}, we now use that solutions are radial and apply the Gagliardo--Nirenberg inequality \eqref{1-d GNinfty}, interpolation, and \eqref{rdr=dr}, to obtain 
\[
\begin{aligned}
\|\psi_k-\psi\|_{C_TL^4(\Sigma)}\lesssim \|\chi(\psi_k - \psi)\|_{L_{r,+}^\infty}
&\lesssim
\|\chi(\psi_k-\psi)\|_{C_TH^{-1}_{r,+}}^{1/4}\|\chi(\psi_k-\psi)\|_{C_TH^1_{r,+}}^{3/4}\\
&\lesssim
\|\psi_k-\psi\|_{C_TH^{-1}(\R^n)}^{1/4}\|\psi_k-\psi\|_{C_TH^1(\R^n)}^{3/4}
\to 0\qt{as \(k\to\infty\)}.
\end{aligned}
\]

Together with convergence of the initial data and conservation of
mass and energy, this gives uniform convergence of the
\(H^1\)-norms.

The Hilbert-space identity and compactness argument
from Proposition~\ref{Ctsdependence} then apply without further
modification, yielding
\[
\lim_{k\to\infty}\sup_{|t|\leq T}\|\psi_k(t)-\psi(t)\|_{H^1(\R^n)}=0,
\]
which completes the proof.
\end{proof}

\bibliographystyle{abbrv}
\bibliography{refs}
\end{document}